\begin{document}
\setlength{\baselineskip}{16pt}

\parindent 0.5cm
\evensidemargin 0cm \oddsidemargin 0cm \topmargin 0cm \textheight 22cm \textwidth 16cm \footskip 2cm \headsep
0cm

\newtheorem{theorem}{Theorem}[section]
\newtheorem{lemma}{Lemma}[section]
\newtheorem{proposition}{Proposition}[section]
\newtheorem{definition}{Definition}[section]
\newtheorem{example}{Example}[section]
\newtheorem{corollary}{Corollary}[section]

\newtheorem{remark}{Remark}[section]
\numberwithin{equation}{section}
%\numberwithin{definition}{section}

\def\p{\partial}
\def\I{\textit}
\def\R{\mathbb R}
\def\C{\mathbb C}
\def\u{\underline}
\def\l{\lambda}
\def\a{\alpha}
\def\O{\Omega}
\def\e{\epsilon}
\def\ls{\lambda^*}
\def\D{\displaystyle}
\def\wyx{ \frac{w(y,t)}{w(x,t)}}
\def\imp{\Rightarrow}
\def\tE{\tilde E}
\def\tX{\tilde X}
\def\tH{\tilde H}
\def\tu{\tilde u}
\def\d{\mathcal D}
\def\aa{\mathcal A}
\def\DH{\mathcal D(\tH)}
\def\bE{\bar E}
\def\bH{\bar H}
\def\M{\mathcal M}
\renewcommand{\labelenumi}{(\arabic{enumi})}

\def\disp{\displaystyle}
\def\undertex#1{$\underline{\hbox{#1}}$}
\def\card{\mathop{\hbox{card}}}
\def\sgn{\mathop{\hbox{sgn}}}
\def\exp{\mathop{\hbox{exp}}}
\def\OFP{(\Omega,{\cal F},\PP)}
\newcommand\JM{Mierczy\'nski}
\newcommand\RR{\ensuremath{\mathbb{R}}}
\newcommand\CC{\ensuremath{\mathbb{C}}}
\newcommand\QQ{\ensuremath{\mathbb{Q}}}
\newcommand\ZZ{\ensuremath{\mathbb{Z}}}
\newcommand\NN{\ensuremath{\mathbb{N}}}
\newcommand\PP{\ensuremath{\mathbb{P}}}
\newcommand\abs[1]{\ensuremath{\lvert#1\rvert}}
\newcommand\normf[1]{\ensuremath{\lVert#1\rVert_{f}}}
\newcommand\normfRb[1]{\ensuremath{\lVert#1\rVert_{f,R_b}}}
\newcommand\normfRbone[1]{\ensuremath{\lVert#1\rVert_{f, R_{b_1}}}}
\newcommand\normfRbtwo[1]{\ensuremath{\lVert#1\rVert_{f,R_{b_2}}}}
\newcommand\normtwo[1]{\ensuremath{\lVert#1\rVert_{2}}}
\newcommand\norminfty[1]{\ensuremath{\lVert#1\rVert_{\infty}}}
\newcommand{\ds}{\displaystyle}

\title{Traveling Wave Solutions of Spatially Periodic Nonlocal Monostable Equations\thanks{Partially supported by NSF grant DMS--0907752}}

\author{
Wenxian Shen and Aijun Zhang \\
Department of Mathematics and Statistics\\
Auburn University\\
Auburn University, AL 36849\\
U.S.A. }

\date{}
\maketitle

\noindent {\bf Abstract.}
This paper deals with front propagation dynamics of monostable equations with nonlocal dispersal in spatially periodic habitats.
In the authors' earlier works, it is shown that a general spatially periodic monostable equation with nonlocal dispersal
 has a unique spatially periodic positive stationary solution  and has a spreading speed  in every direction.
In this paper, we show that a spatially periodic nonlocal monostable equation
with  certain spatial homogeneity or small nonlocal dispersal distance   has a unique stable  periodic traveling wave solutions
connecting its unique spatially periodic positive stationary solution and the trivial solution in every direction for all  speeds
greater than  the spreading speed in that direction.

\bigskip

\noindent {\bf Key words.} Monostable equation; nonlocal dispersal;
 random dispersal; spreading speed;  traveling wave solution;
  principal eigenvalue;
 principal eigenfunction.

\bigskip

\noindent {\bf Mathematics subject classification.} 35K55, 45C05, 45G10, 45M20, 47G20, 92D25.

\newpage

\section{Introduction}

The current paper is concerned with traveling wave solutions of spatially periodic nonlocal monostable equations.

Monostable  equations arise in modeling  population dynamics of species in biology and ecology.
Classically, one assumes that the internal interaction of species is random and local (i.e.
 species moves randomly between the adjacent spatial locations),
 which leads to the following reaction-diffusion equation,
 \begin{equation}
\label{random-eq} \frac{\p u}{\p t}= \Delta u+uf(x,u),\quad x\in\RR^N,
\end{equation}
where
$u(t,x)$ represents the population density of species at time $t$ and spatial location $x$ and
$f(x,u)$ satisfies certain monostablility assumptions. Roughly, the monostablility assumptions mean that
 $f(x,u)<0$ for $u\gg 1$, $f_u(x,u)<0$ for $u\geq 0$,
and the trivial solution $u=0$ is unstable.

In reality, the movements and interactions of many
species in biology and ecology can occur between non-adjacent spatial locations. Taking the nonlocal internal interaction
of species into the account  leads to the following nonlocal dispersal evolution equation,
\begin{equation}
\label{main-eq}
 \frac{\p u}{\p
 t}=\int_{\RR^N}k(y-x)u(t,y)dy-u(t,x)+u(t,x)f(x,u(t,x)),\quad
 x\in\RR^N,
\end{equation}
where
% $\delta>0$ represents the nonlocal dispersal distance, and
 $k(\cdot)$ is a $C^1$
 %symmetric
 convolution kernel supported on a ball centered at the origin (that is, there is  a $\delta_0>0$ such that
  $k(z)>0$ if $\|z\|<\delta_0$, $k(z)=0$ if $\|z\|\geq \delta_0$, where
 $\|\cdot\|$ denotes the norm
 in $\RR^N$ and $\delta_0$ represents the nonlocal dispersal distance),  $\int_{\RR^N}k(z)dz=1$,
  and $f(x,u)$ satisfies certain monostable assumptions.

 Throughout this paper, we assume that $f(x,u)$ is periodic in $x$ with period vector $\mathbf{p}=(p_1,p_2,\cdots, p_N)$ (that is,  $f(\cdot+p_i{\bf e_i},\cdot)=f(\cdot,\cdot)$,
${\bf e_i}=(\delta_{i1},\delta_{i2}, \cdots,\delta_{iN})$, $\delta_{ij}=1$ if $i=j$ and $0$ if $i\not =j$,
$i,j=1,2,\cdots,N$).
    To state the monostablility assumptions on $f$, let
\begin{equation}
\label{x-p-space} X_p=\{u\in C(\RR^N,\RR)|u(\cdot+p_i{\bf e_i})=u(\cdot),\quad i=1,\cdots,N\}
\end{equation}
with norm $\|u\|_{X_p}=\ds\sup_{x\in\RR^N}|u(x)|$, and
\begin{equation}
\label{x-p-positive-space} X_p^+=\{u\in X_p\,|\, u(x)\geq 0\quad \forall x\in\RR^N\}.
\end{equation}
Let  $I$ be the identity map on $X_p$, and $\mathcal{K}$, $a_0(\cdot)I:X_p\to X_p$ be defined by
\begin{equation}
\label{k-delta-op} \big(\mathcal{K} u\big)(x)=\int_{\RR^N}k(y-x)u(y)dy,
\end{equation}
\begin{equation}
\label{a-op} (a_0(\cdot)Iu)(x)=a_0(x)u(x),
\end{equation}
where $a_0(x)=f(x,0)$.
We assume   the following  monostablility assumptions
for \eqref{random-eq}:

\medskip
 \noindent{\bf (A1)} {\it $f\in C^1(\RR^N\times [0,\infty),\RR)$,
 $\ds\sup_{x\in\RR^N,u\geq 0}\frac{\p f(x,u)}{\p u}<0$   and
 $f(x,u)<0$ for $x\in\RR^N$ and $u\gg 1$.}

 \medskip
\noindent{\bf (A2)} {\it $u\equiv 0$ is linearly unstable in $X_p$, that is,
the principal eigenvalue of
$$
\begin{cases}
\Delta u+a_0(x)u=\lambda u,\quad x\in\RR^N\cr
u(x+p_i\mathbf{e_i})=u(x),\,\,\, i=1,2,\cdots,N,\,\, x\in\RR^N
\end{cases}
$$
is positive. }

 \medskip

The following are monostablility assumptions for \eqref{main-eq}:

\medskip
\medskip
 \noindent{\bf (H1)}  {\it $f\in C^1(\RR^N\times [0,\infty),\RR)$,
 $\ds\sup_{x\in\RR^N,u\geq 0}\frac{\p f(x,u)}{\p u}<0$   and
 $f(x,u)<0$ for $x\in\RR^N$ and $u\gg 1$.}

 \medskip
\noindent{\bf (H2)} {\it $u\equiv 0$ is linearly unstable in $X_p$, that is, $\lambda_0:=\sup\{{\rm
Re}\lambda\,|\, \lambda\in\sigma(\mathcal{K}-I+a_0(\cdot)I)\}$ is positive, where  $\sigma(\mathcal{K}-I+a_0(\cdot)I)$
is the spectrum of the operator
 $\mathcal{K}-I+a_0(\cdot)I$ on $X_p$.}

 \medskip

It is well known that (A1) and (A2)  imply  that \eqref{random-eq} has exactly two
equilibrium solutions in $X_p^+$,  $u= 0$ and $u= u^+$, and $u= 0$ is linearly unstable and $u=u^+$ is
asymptotically stable in $X_p$, which reflects the monostable feature of the assumptions (A1) and (A2).

Observe that (A1) and (H1) are exactly the same (it is for convenience to state them separately). (H2) is the analogue of (A2). It should be pointed out that
$\lambda_0$ in (H2) may not be an eigenvalue of $\mathcal{K}-I+a_0(\cdot)I$ (see an example in
\cite{ShZh0}) and therefore there is some
essential difference between random dispersal and nonlocal dispersal operators.
Nevertheless, it is proved in  \cite{ShZh1}  that  (H1) and (H2) also imply  that  \eqref{main-eq} has exactly two
equilibrium solutions in $X_p^+$,  $u= 0$ and $u= u^+$, and $u= 0$ is linearly unstable and $u=u^+$ is
asymptotically stable in $X_p$ (see Proposition \ref{positive-solu-prop} or
\cite[Theorem C]{ShZh1}), which reflects the monostable feature of the assumptions (H1) and (H2).

Among the most important dynamical issues about monostable equations
\eqref{random-eq} and \eqref{main-eq} are spatial spread and front
propagation dynamics.

The spatial spread and front propagation dynamics of
\eqref{random-eq} has been extensively studied since the pioneering
works by Fisher \cite{Fisher} and Kolmogorov, Petrowsky, Piscunov
\cite{KPP} on the following special case of \eqref{random-eq}
\begin{equation}
\label{classical-fisher-eq}
 \frac{\p u}{\p t}=\frac{\p ^ 2u}{\p x^2}+u(1-u),\quad\quad x\in \RR,
\end{equation}
which models the evolutionary take-over of a habitat by a fitter
genotype. See, for example, \cite{ArWe2},
\cite{BeHaNa1}, \cite{BeHaNa2}, \cite{BeHaRo}, \cite{FiMc}, \cite{FrGa},
 \cite{Ham}, \cite{HuSh}, \cite{HuZi1}, \cite{Kam}, \cite{LiZh}, \cite{LiZh1},
\cite{LiYiZh}, \cite{Lui}, \cite{Nad}, \cite{NoRuXi}, \cite{NoXi1},
\cite{RyZl}, \cite{Sat}, \cite{She1}, \cite{She3}, \cite{She4},
\cite{Uch}, \cite{Wei1}, \cite{Wei2}, and references therein, for
the study of  the spatial spread and front propagation dynamics of \eqref{random-eq}.
It is proved  that,   if  (A1) and (A2) hold,
then
 for every $\xi\in S^{N-1}:=\{\xi\in\RR^N|\,\|\xi\|=1\}$,
  there is a $c^*(\xi)\in\RR$ such that for every $c\geq c^*(\xi)$, there is a
 traveling wave solution connecting $u^+$ and $u^-\equiv 0$ and propagating in the direction of $\xi$
 with speed $c$, and there is no such traveling
 wave solution of slower speed in the direction of $\xi$.  Moreover, the minimal wave speed $c^*(\xi)$ is
 of some  important spreading  properties (hence is also called the spreading speed in the direction of
 $\xi$) and has the following variational
characterization. Let $\lambda(\xi,\mu)$ be the eigenvalue  of
\begin{equation}
\label{laplacian-eigenvalue-eq}
\begin{cases}
\Delta u-2\mu\sum_{i=1}^N\xi_i\frac{\p u}{\p x_i}+(a_0(x)+\mu^2)u=\lambda u,\quad x\in\RR^N\cr
u(x+p_i{\bf e_i})=u(x),\quad i=1,2,\cdots,N\quad x\in\RR^N
\end{cases}
\end{equation}
with largest real part, where $a_0(x)=f(x,0)$ (it is well known that $\lambda(\xi,\mu)$ is real and
algebraically simple. $\lambda(\xi,\mu)$ is called the {\it principal eigenvalue} of
\eqref{laplacian-eigenvalue-eq} in literature).
 Then
\begin{equation}
\label{random-variational-eq} c^*(\xi)=\inf_{\mu>0}\frac{\lambda(\xi,\mu)}{\mu}.
\end{equation}
(See \cite{BeHaNa1}, \cite{BeHaNa2}, \cite{BeHaRo}, \cite{LiZh},
\cite{Nad},  \cite{NoRuXi}, \cite{NoXi1}, \cite{Wei2} and references
therein for the above mentioned properties).

Recently,  various dynamical problems  related  to the spatial spread and front propagation
 dynamics of
 nonlocal dispersal equations of the form \eqref{main-eq}   have also been studied by many authors. See, for example,
\cite{BaZh}, \cite{ChChRo}, \cite{CoElRo},  \cite{Cov},
\cite{CoDaMa1}, \cite{GaRo}, \cite{GrHiHuMiVi}, \cite{HeNgSh}, \cite{HeShZh},
\cite{HuGr}, \cite{HMMV}, \cite{HuShVi},
\cite{KaLoSh}, \cite{ShVi}, for the study of  spectral theory for
nonlocal dispersal operators and the existence, uniqueness, and
stability of nontrivial positive  stationary solutions. See, for
example, \cite{Cov1}, \cite{CoDu}, \cite{CoDaMa2}, \cite{LiSuWa},
\cite{LvWa}, \cite{PaLiLi}, \cite{Wei1}, \cite{Wei2}, for the study
of entire solutions and the existence of spreading speeds and
traveling wave solutions connecting the trivial solution $u=0$ and a
nontrivial positive stationary solution for some special cases of
\eqref{main-eq}. In particular, if $f(x,u)$ is independent of $x$,
then it is proved that \eqref{main-eq} has a spreading speed
$c^*(\xi)$ in every direction of $\xi\in S^{N-1}$
 ($c^*(\xi)$ is indeed independent of $\xi\in S^{N-1}$ in this case) and for every $c\geq c^*(\xi)$, \eqref{main-eq}
 has a traveling wave solution connecting $u^+$ and $0$ and propagating in the direction of $\xi$ with propagating speed $c$
 (see \cite{Cov1}).
 In the very recent papers \cite{ShZh0}, \cite{ShZh1}, the authors of the current paper explored the spatial spread
dynamics of general spatially periodic monostable equations and proved that assume (H1) and (H2),
\eqref{main-eq} has a spreading speed $c^*(\xi)$ in every direction of $\xi\in S^{N-1}$,
which extends the existing results
 on  spreading speed
of \eqref{random-eq} to
\eqref{main-eq}.

However, there is little understanding about  traveling wave solutions of spatially periodic monostable equations with nonlocal dispersal.
The objective of the current paper is to investigate the existence, uniqueness, and stability of traveling wave solutions of \eqref{main-eq}.
We show that if the periodic habitat of \eqref{main-eq} is of certain homogeneity or the nonlocal dispersal distance is small,
then \eqref{main-eq} has a unique stable traveling wave solution which connects $u^+$ and $0$ and propagates in a given
direction $\xi\in S^{N-1}$ for all speeds greater than the spreading speed in the direction of  $\xi$. The main tools employed in the
proofs of the existence, uniqueness, and stability of traveling wave solutions of \eqref{main-eq} include  sub- and super-solutions
and the principal eigenvalue theory for nonlocal dispersal operators which has recently been established  in
\cite{ShZh0}.

It should be pointed out that the spatial spread and front
propagation dynamics of spatially discrete monostable equations has
also been widely studied. The reader is referred to \cite{Che},
\cite{ChGu1}, \cite{ChGu2}, \cite{GuHa}, \cite{GuWu},  \cite{HuZi2},
\cite{She2}, \cite{WuZo}, \cite{ZiHaHu}.

The rest of this paper is organized as follows. In section 2, we introduce some standing notations and the definition of
spatially periodic traveling wave solutions and state the main results of the paper. In section 3, we present the comparison
principle for solutions of \eqref{main-eq} and some related linear equations with nonlocal dispersal and construct some
sub- and super-solutions of \eqref{main-eq} to be used in the proofs of the main results. The existence of traveling wave
solutions is investigated in section 4.
Section 5 concerns the uniqueness and continuity of traveling wave solutions.
In section 6, we show the stability  of traveling wave solutions.

\section{Notations, Definitions, and Main Results}

In this section, we introduce some standing notations and the
definition of spatially periodic traveling wave solutions, and state
the main results of the paper.

First of all,  let $X_p$ and $X_p^+$ be as in \eqref{x-p-space} and \eqref{x-p-positive-space}, respectively.
Let
\begin{equation}
\label{x-space} X=\{u\in C(\RR^N,\RR)\,|\, u\,\, \text{is uniformly continuous on}\,\,\RR^N\,\,{\rm and}\,\,
\sup_{x\in\RR^N}|u(x)|<\infty\}
\end{equation}
with norm $\|u\|_X=\ds\sup_{x\in\RR^N}|u(x)|$, and
\begin{equation}
\label{x-positive-space} X^+=\{u\in X\,|\, u(x)\geq 0\quad \forall x\in\RR^N\}.
\end{equation}
Let
\begin{equation}
\label{general-space}
\tilde X=\{u:\RR^N\to \RR\,|\, u\,\, \text{is Lebesgue measurable and bounded}\}
\end{equation}
endowed with the norm $\|u\|_{\tilde X}=\ds\sup_{x\in\RR^N}|u(x)|$
and
\begin{equation}
\label{general-positive-space}
\tilde X^+=\{u\in \tilde X\,|\, u(x)\geq 0\quad \forall x\in\RR^N\}.
\end{equation}
Observe that $X_p\subset X\subset \tilde X$.

Consider the shifted equations of \eqref{main-eq},
\begin{equation}
\label{main-shifted-eq}
 \frac{\p u}{\p
 t}=\int_{\RR^N}k(y-x)u(t,y)dy-u(t,x)+u(t,x)f(x+z,u(t,x)),\quad
 x\in\RR^N
\end{equation}
where $z\in\RR^N$. By general semigroup theory (see \cite{Hen} and
\cite{Paz}), for any $u_0\in \tilde X$ and $z\in\RR$,
\eqref{main-shifted-eq} has a unique (local) solution $u(t,\cdot)\in
\tilde X$ with $u(0,x)=u_0(x)$. Let $u(t,x;u_0,z)$ be the solution
of \eqref{main-shifted-eq} with $u(0,x;u_0,z)=u_0(x)$. Note that if
$u_0\in X_p$ (resp. $X$), then $u(t,\cdot;u_0,z)\in X_p$ (resp.
$X)$.
 If $u_0\in \tilde X^+$, then $u(t,x;u_0)$ exists for
all $t\geq 0$ (see Proposition \ref{comparison-prop}).

A measurable function
$u:\RR\times \RR^N$ is call an {\it entire solution} of \eqref{main-eq} if
$u(t,x)$ is differentiable in $t\in\RR$ and satisfies \eqref{main-eq} for all
$t\in\RR$ and $x\in\RR^N$.

\begin{proposition}
\label{positive-solu-prop}
Assume (H1)-(H2).  Then \eqref{main-eq} has a unique stationary solution
$u^+(\cdot)\in X_p^+\setminus\{0\}$. Moreover, $u=u^+(\cdot)$ is asymptotically stable with respect to
perturbations in $X_p^+\setminus\{0\}$ and for any $\xi\in S^{N-1}$,
any  $u_0\in \tilde X^+$,
$u_0(x)\geq \delta$ for all  $x\in\RR^N$ with $x\cdot\xi\le m$  for some $m\in\RR$ and $\delta>0$,
and any $\epsilon>0$, there are $T>0$ and $R\in\RR$ such that
$$\sup_{x,z\in\RR^N,x\cdot\xi\leq r}|u(T,x;u_0,z)-u^+(x+z)|<\epsilon\quad \forall r\le R.
$$
\end{proposition}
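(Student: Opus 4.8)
The plan is to reduce everything to the corresponding statements for the spatially periodic equation \eqref{main-eq} (equivalently, for the shift parameter $z=0$), which are covered by \cite[Theorem C]{ShZh1}, and then to handle the $z$-dependence and the one-sided spreading uniformly in $z$ by a comparison argument. The existence of a unique $u^+(\cdot)\in X_p^+\setminus\{0\}$ and its asymptotic stability with respect to perturbations in $X_p^+\setminus\{0\}$ is exactly the content of \cite[Theorem C]{ShZh1}, so only the last — the uniform spreading estimate from a one-sided bounded-below datum — requires work. Note first that because $f(\cdot+z,\cdot)$ is again a monostable nonlinearity of the same type with the same period $\mathbf p$ and the same constants in (H1)–(H2) (the principal eigenvalue in (H2) is $z$-independent by periodicity), the shifted equation \eqref{main-shifted-eq} has the unique positive periodic stationary solution $u^+(\cdot+z)$, and its basin of attraction in $X_p^+\setminus\{0\}$ is the same; this gives the $z$-uniformity needed below.

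First I would record the comparison principle (Proposition \ref{comparison-prop}): if $u_0\ge v_0$ pointwise then $u(t,\cdot;u_0,z)\ge u(t,\cdot;v_0,z)$, and constants have a simple comparison behaviour because $\mathcal K$ preserves constants ($\int k=1$). Next, for the sub-solution: given the datum $u_0$ with $u_0(x)\ge\delta$ for $x\cdot\xi\le m$, I would build a front-like sub-solution of \eqref{main-shifted-eq} of the form obtained by combining a small constant lower bound on a moving half-space with the known construction of sub-solutions in Section 3. Concretely, since $u=0$ is linearly unstable with a positive (essential or point) principal spectral value, for small $\delta_1\in(0,\delta]$ the constant $\delta_1$ is a (spatially homogeneous) sub-solution only near $0$; to propagate it one uses a sub-solution that equals a small constant behind a moving interface $x\cdot\xi=-ct$ for $c$ large enough (large speed makes the front easy to push). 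Starting below $u_0$, this sub-solution forces $u(t,x;u_0,z)$ to stay bounded below by a positive constant on every half-space $\{x\cdot\xi\le r\}$ once $t$ is large, uniformly in $z$ and in $r\le R(t)$. On such a half-space the solution is then trapped between a positive constant and the super-solution (any large constant $\ge u^+$), and on that order interval the dynamics of \eqref{main-shifted-eq} is, by the periodic monostable theory, attracted to $u^+(\cdot+z)$ — but one must do this on the half-space, not on all of $\RR^N$, so one localizes: fix $\epsilon>0$, pick the global-in-$\RR^N$ convergence time $T_1$ for the periodic problem started from the constant lower bound (this is $z$-independent after the shift), and then use finite speed-of-influence-type estimates for the nonlocal operator to argue that the value of $u(t,x;u_0,z)$ for $x$ deep inside the half-space is, up to $\epsilon$, the same as the value of the solution started from data that is that constant everywhere. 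Choosing $R$ so that the "boundary layer" near $x\cdot\xi\approx m$ has not yet reached $\{x\cdot\xi\le R\}$ by time $T=T_1+(\text{buffer})$ gives $\sup_{x\cdot\xi\le r,\,z}|u(T,x;u_0,z)-u^+(x+z)|<\epsilon$ for all $r\le R$.

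The main obstacle is the nonlocal analogue of "finite speed of propagation": for \eqref{main-eq} influence is not compactly supported in finite time, so one cannot literally say the solution on a half-space ignores the data far away. The remedy — and the step I would spend the most care on — is a quantitative decay estimate: writing $u = w_{\mathrm{loc}} + e$ where $w_{\mathrm{loc}}$ solves the equation with truncated (half-space) data, the error $e$ solves a linear nonlocal equation with zero data on $\{x\cdot\xi\le m\}$ and bounded data beyond, so by iterating the Duhamel/variation-of-constants formula against the kernel $k$ (supported in $\|z\|<\delta_0$) one gets $|e(t,x)|\lesssim \sum_{n\ge (m-x\cdot\xi)/\delta_0} (Ct)^n/n!$, which is $<\epsilon$ when $x\cdot\xi$ is sufficiently negative relative to $t$; this is exactly what lets $R$ be chosen. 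The remaining ingredients — monotonicity in the initial datum, the part-metric / sub-super-solution contraction toward $u^+$ on an order interval bounded away from $0$, and uniformity in $z$ by periodicity — are then routine given Propositions \ref{comparison-prop} and \ref{positive-solu-prop}'s stationary/stability part and the sub- and super-solutions constructed in Section 3.
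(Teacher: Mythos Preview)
The paper gives no self-contained proof here: it simply cites \cite[Theorem C]{ShZh1} for the existence, uniqueness, and periodic asymptotic stability of $u^+$, and \cite[Proposition 2.3]{ShZh0} for the one-sided spreading estimate. You correctly invoke the first citation, but for the spreading part you supply an original argument rather than deferring to \cite{ShZh0}.

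Your route is valid and genuinely different from the standard one. You exploit the compact support of $k$ (radius $\delta_0$): writing the difference $e$ between the solution with constant data $\delta$ and the solution with half-space data as a solution of a linear equation with bounded coefficient (this is fine by the mean value theorem applied to $u\mapsto uf(x,u)$, though you should say so explicitly), you observe that the $n$-th iterate $\mathcal{K}^n$ of data supported in $\{x\cdot\xi>m\}$ vanishes on $\{x\cdot\xi\le m-n\delta_0\}$, whence the tail bound $e^{Ct}\sum_{n>L/\delta_0}t^n/n!$. Combined with the $z$-uniform convergence $u(t,\cdot;\delta,z)\to u^+(\cdot+z)$ from the periodic theory, this gives the estimate. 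The approach one expects in \cite{ShZh0} instead constructs a spreading sub-solution directly from the linear instability of $0$ and the principal eigenfunction, in the style of Weinberger; that method does not need compactly supported $k$ and transfers verbatim to the random-dispersal case, whereas your localization argument is special to the present hypothesis on $k$. Both work here; yours is more elementary but less portable. One cosmetic slip: your moving interface ``$x\cdot\xi=-ct$ for $c$ large'' has the wrong sign (the sub-solution front should advance, not retreat), but this plays no role once you pass to the finite-speed argument.
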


\begin{proof}
It follows from the arguments in \cite[Theorem C]{ShZh1} and \cite[Proposition 2.3]{ShZh0}.
\end{proof}

For given function $g:\RR\times \RR^N\times \RR^N\to\RR$,  $\xi\in S^{N-1}$, and $c,\alpha\in\RR$,
we define the following limit:
$$
\lim_{x\cdot\xi-ct\to \infty(-\infty)}g(t,x,z)=\alpha\,\,\text{uniformly
in}\,\, z\in\RR^N
$$
if and only if
$$ \lim_{r\to
\infty(-\infty)}\sup_{t\in\RR,x,z\in\RR^N,x\cdot\xi-ct\geq r(\leq
r)}|g(t,x,z)-\alpha|=0.
$$

\begin{definition}[Traveling wave solution]
\label{wave-def}
\begin{itemize}
\item[(1)]
An entire solution $u(t,x)$ of \eqref{main-eq} is called a {\rm traveling wave solution} connecting
$u^+(\cdot)$ and $0$ and propagating in the direction of $\xi$ with speed $c$ if there is a bounded
measurable function $\Phi:\RR^N\times \RR^N\to \RR^+$ such that $u(t,\cdot;\Phi(\cdot,z),z)$ exists
for all $t\in\RR$,
\begin{equation}
\label{wave-eq1} u(t,x)= u(t,x;\Phi(\cdot,0),0)=\Phi(x-ct\xi,ct\xi)\quad\forall t\in\RR,\,\, x\in\RR^N,
\end{equation}
\begin{equation}
\label{def-eq1}
u(t,x;\Phi(\cdot,z),z)=\Phi(x-ct\xi,z+ct\xi)\quad\forall t\in\RR,\,\, x,z\in\RR^N,
\end{equation}
\begin{equation}
\label{def-eq2}
\lim_{x\cdot\xi \to -\infty}\big(\Phi(x,z)-u^+(x+z)\big)=0,\quad \lim_{x\cdot\xi\to\infty}\Phi(x,z)=0\quad \text{uniformly in}\,\, z\in\RR^N,
\end{equation}
\begin{equation}
\label{def-eq3}
\Phi(x,z-x)=\Phi(x^{'},z-x^{'})\quad \forall x,x^{'}\in\RR^N\,\, \text{with}\,\, x\cdot\xi=x^{'}\cdot\xi,
\end{equation}
and
\begin{equation}
\label{def-eq4}
\Phi(x,z+p_i{\bf e_i})=\Phi(x,z)\quad \forall x,z\in\RR^N.
\end{equation}

\item[(2)] A bounded measurable function $\Phi:\RR^N\times\RR^N\to \RR^+$ is said to {\rm generate a traveling wave solution of
\eqref{main-eq} in the direction of $\xi$  with speed $c$} if it satisfies \eqref{def-eq1}-\eqref{def-eq4}.
\end{itemize}
\end{definition}

\begin{remark}
\label{wave-rk} Suppose that $u(t,x)=\Phi(x-ct\xi,c t\xi)$ is a
traveling wave solution of \eqref{main-eq} connecting $u^+(\cdot)$
and $0$ and propagating in the direction of $\xi$ with speed $c$.
Then $u(t,x)$ can be written as
\begin{equation}
\label{wave-eq2}
 u(t,x)=\Psi(x\cdot\xi-ct,x)
\end{equation}
for some $\Psi:\RR\times\RR^N\to \RR$ satisfying that
$\Psi(\eta,z+p_i{\bf e_i})=\Psi(\eta,z)$, $\ds\lim_{\eta\to -\infty} \Psi(\eta,z)=u^+(z)$, and
$\ds\lim_{\eta\to\infty}\Psi(\eta,z)=0$ uniformly in $z\in\RR^N$. In fact, let
$\Psi(\eta,z)=\Phi(x,z-x)$
for $x\in\RR^N$ with $x\cdot\xi=\eta$. Observe that $\Psi(\eta,z)$
is well defined and has the above mentioned properties. In some
literature, the form \eqref{wave-eq2} is adopted for spatially
periodic traveling wave solutions (see \cite{LiZh}, \cite{Nad},
\cite{Wei2}, and references therein).
\end{remark}

Next, we
recall some principal eigenvalue theory and spatial spreading theory established in \cite{ShZh0} and \cite{ShZh1}.

Consider the following eigenvalue problem, which
is a nonlocal counterpart of \eqref{laplacian-eigenvalue-eq},
\begin{equation}
\label{eigenvalue-eq} \big( \mathcal{K}_{\xi,\mu} -I +a(\cdot) I
\big)v=\lambda v,\quad v\in X_p,
\end{equation}
where $\xi\in S^{N-1}$, $\mu\in\RR$, and $a(\cdot)\in X_p$. The operator  $a(\cdot)I$  has the same
 meaning
  as in \eqref{a-op} with $a_0(\cdot)$ being replaced
 by $a(\cdot)$,
 and $\mathcal{K}_{\xi,\mu}:X_p\to X_p$ is defined by
\begin{equation}
\label{k-delta-xi-mu-op}
(\mathcal{K}_{\xi,\mu}v)(x)=\int_{\RR^N}e^{-\mu(y-x)\cdot\xi}k(y-x)v(y)dy.
\end{equation}
We point out the following relation between \eqref{main-eq} and
\eqref{eigenvalue-eq}: if $u(t,x)=e^{-\mu(
x\cdot\xi-\frac{\lambda}{\mu}t)}\phi(x)$ with $\phi\in
X_p\setminus\{0\}$ is a solution of
 the  linearization of \eqref{main-eq} at $u= 0$,
\begin{equation}
\label{linearization-eq0} \frac{\p u}{\p t}=\int_{\RR^N}
k(y-x)u(t,y)dy-u(t,x)+a_0(x)u(t,x),\quad x\in\RR^N,
\end{equation}
where $a_0(x)=f(x,0)$,  then $\lambda$ is an eigenvalue of
\eqref{eigenvalue-eq}  with $a(\cdot)=a_0(\cdot)$ or
$\mathcal{K}_{\xi,\mu}-I+a_0(\cdot)I$  and $v=\phi(x)$ is a
corresponding eigenfunction.

 Let
$\sigma( \mathcal{K}_{\xi,\mu}- I+a(\cdot)I)$ be the spectrum of
$ \mathcal{K}_{\xi,\mu}- I+a(\cdot)I$ on $X_p$. Let
$$\lambda_0(\xi,\mu,a):=\sup\{{\rm Re}\lambda\,|\,\lambda\in \sigma( \mathcal{K}_{\xi,\mu}- I+a(\cdot)I)\}.$$
We call $\lambda_0(\xi,\mu,a)$  the {\it principal spectrum point} of $\mathcal{K}_{\xi,\mu}- I+a(\cdot)I$.
Observe that if $\mu=0$, \eqref{eigenvalue-eq} is independent of $\xi$
and hence we put
\begin{equation}
\label{lambda-delta-a}
\lambda_0(a):=\lambda_0(\xi,0,a)\quad \forall\,\, \xi\in
S^{N-1}.
\end{equation}
$\lambda_0(\xi,\mu,a)$ is called the {\it principal eigenvalue} of $
\mathcal{K}_{\xi,\mu}- I+a(\cdot)I$ or $ \mathcal{K}_{\xi,\mu}- I+a(\cdot)I$ is said to have  a
principal eigenvalue if  $\lambda_0(\xi,\mu,a)$ is an algebraically
simple eigenvalue of $\mathcal{K}_{\xi,\mu}-I+a(\cdot)I$ with an eigenfunction
 $v\in X_p^+$,  and for every $\lambda\in
\sigma( \mathcal{K}_{\xi,\mu}- I+a(\cdot)I) \setminus
\{\lambda_0(\xi,\mu,a)\}$, ${\rm
Re}\lambda<\lambda_0(\xi,\mu,a)$.

Observe that $ \mathcal{K}_{\xi,\mu}- I+a(\cdot)I$ may not have a principal eigenvalue  (see an example in
\cite{ShZh0}), which reveals some essential difference between random dispersal operators and nonlocal dispersal operators.
The following  proposition on the existence of principal eigenvalue of $ \mathcal{K}_{\xi,\mu}- I+a(\cdot)I$ is proved in \cite{ShZh0}
(see also \cite{ShZh1}).

\begin{proposition}
\label{PE-prop}
\begin{itemize}
\item[(1)]
 If $k(x)= \frac{1}{\delta^N}\tilde k(\frac{x}{\delta})$ for all $x\in\RR^N$, where $\tilde k(\cdot)$ satisfies that $\tilde k(z)>0$ for
    $\|z\|<1$, $\tilde k(z)=0$ for $\|z\|\geq 1$, and $\int_{\RR^N}\tilde k(z)dz=1$, then
 $\lambda_0(\xi,\mu,a)$ is the principal eigenvalue
 of $\mathcal{K}_{\xi,\mu}-I+a(\cdot)I$
 for all  $\xi\in S^{N-1}$,
$\mu\in\RR$ and $0<\delta\ll 1$.

 \item[(2)] If $a(x)$ satisfies that $\ds\max_{x\in\RR^N}a(x)-\ds\min_{x\in\RR^N}a(x)<1$,
then  $\lambda_0(\xi,\mu,a)$ is the principal eigenvalue of $ \mathcal{K}_{\xi,\mu}- I+a(\cdot)I$
for  all $\xi\in S^{N-1}$ and $\mu\in\RR$.

\item[(3)]  If $a(\cdot)$ is $C^N$ and  the partial
derivatives of $a(x)$ up to order $N-1$ at some $x_0$ are zero, where $x_0$ is such that
$a(x_0)=\ds\max_{x\in\RR^N}a(x)$, then the conclusion in (2) holds.
\end{itemize}
\end{proposition}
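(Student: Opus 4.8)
The plan is to reduce all three statements to a single scalar inequality and then verify that inequality case by case. First I would pass to a fundamental domain, so that $X_p$ is $C$ of a compact set and $\mathcal{K}_{\xi,\mu}$ from \eqref{k-delta-xi-mu-op} is an integral operator with continuous kernel, hence compact; it is positive, and since $k>0$ on $\{\|z\|<\delta_0\}$ while the fundamental domain has finite diameter, the powers $(\mathcal{K}_{\xi,\mu})^m$ have strictly positive kernels for all large $m$. Fixing $M$ large enough that $\mathcal{A}:=\mathcal{K}_{\xi,\mu}-I+a(\cdot)I+MI$ maps $X_p^+$ into itself, $\mathcal{A}$ is a positive operator whose powers are eventually strongly positive (so $\mathcal{A}$ is irreducible). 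Since $\mathcal{K}_{\xi,\mu}$ is compact, Weyl's theorem gives $\sigma_{\rm ess}(\mathcal{A})=\sigma_{\rm ess}((a-1+M)I)=[\min_x a(x)-1+M,\ \max_x a(x)-1+M]$, so the top of the essential spectrum of $\mathcal{K}_{\xi,\mu}-I+a(\cdot)I$ is $\max_x a(x)-1$ and $\lambda_0(\xi,\mu,a)\ge\max_x a(x)-1$ always. The key dichotomy is then: if $\lambda_0(\xi,\mu,a)>\max_x a(x)-1$, this number lies strictly above the essential spectrum, hence is an isolated eigenvalue of finite algebraic multiplicity, and irreducibility (Krein--Rutman/Perron--Frobenius for positive irreducible operators) upgrades it to an algebraically simple eigenvalue with a strictly positive eigenfunction in $X_p^+$ and with ${\rm Re}\,\lambda<\lambda_0(\xi,\mu,a)$ for every other $\lambda$ in the spectrum --- i.e.\ to the principal eigenvalue. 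So it suffices to prove $\lambda_0(\xi,\mu,a)>\max_x a(x)-1$ in each of (1)--(3).

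Next I would set up a resolvent criterion for that strict inequality. For real $\lambda>\max_x a(x)$ let $T_\lambda:=(\lambda-a(\cdot))^{-1}\mathcal{K}_{\xi,\mu}$, a positive, compact, irreducible operator; its spectral radius $\rho(\lambda):=r(T_\lambda)$ is continuous, strictly decreasing, and tends to $0$ as $\lambda\to\infty$. From the identity $\mathcal{K}_{\xi,\mu}+a(\cdot)I-\lambda I=-(\lambda-a(\cdot))(I-T_\lambda)$, valid whenever ${\rm Re}\,\lambda>\max_x a(x)$, together with the domination $|T_\lambda\phi|\le T_{{\rm Re}\,\lambda}|\phi|$ (hence $r(T_\lambda)\le r(T_{{\rm Re}\,\lambda})$), one obtains that $\lambda_0(\xi,\mu,a)>\max_x a(x)-1$ if and only if $\lim_{\lambda\downarrow\max_x a(x)}\rho(\lambda)>1$; when this holds, $\lambda_0(\xi,\mu,a)+1$ is the unique $\lambda^*$ with $\rho(\lambda^*)=1$. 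It therefore remains only to bound this limit from below in each case.

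For (3): Taylor's formula at a maximizer $x_0$ of $a$, using that the derivatives of $a$ up to order $N-1$ vanish there, gives $\max_x a(x)-a(x)\le C|x-x_0|^N$ near $x_0$, so $1/(\max_x a(x)-a(\cdot))$ fails to be integrable near $x_0$ (logarithmically in dimension $N$). Choosing $m$ with $(m-1)\delta_0$ exceeding the diameter of a fundamental domain, each $x$ can be joined to a periodic translate of $x_0$ by a chain of steps of length $<\delta_0$; restricting the $m$-fold integral defining $(T_\lambda^m\mathbf{1})(x)$ to a tube around such a chain (where $k>0$), the factor belonging to the step through a small ball about $x_0$ integrates to at least a constant multiple of $\log\frac1{\lambda-\max_x a(x)}$, uniformly in $x$; hence $\rho(\lambda)=(r(T_\lambda^m))^{1/m}\ge(\inf_x(T_\lambda^m\mathbf{1})(x))^{1/m}\to\infty$ and the criterion holds. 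For (1): with $k=\delta^{-N}\tilde k(\cdot/\delta)$ one has $\mathcal{K}_{\xi,\mu}\to I$ as $\delta\to0$, and a local analysis near a maximizer $x_0$ of $a$ --- rescaling $x-x_0$ by the power of $\delta$ dictated by the order of vanishing of $\max_x a(x)-a(x)$ at $x_0$, and Taylor-expanding $\tilde k$ and $a$ --- reduces the eigenvalue equation, to leading order, to a harmonic-oscillator-type problem (or a trivial one when $a$ is flat at $x_0$), showing $\lambda_0(\xi,\mu,a)=\max_x a(x)-o(1)$ as $\delta\to0$; in particular $\lambda_0(\xi,\mu,a)>\max_x a(x)-1$ once $0<\delta\ll1$. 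For (2): when $\mu=0$ the test function $\phi\equiv1$ works immediately, since $\mathcal{K}_{\xi,0}\phi=\phi$ forces $\lambda_0(\xi,0,a)\ge\min_x a(x)>\max_x a(x)-1$; for $\mu\ne0$ I would use $\mathcal{K}_{\xi,\mu}\mathbf{1}=\kappa(\xi,\mu)\,\mathbf{1}$, $\kappa(\xi,\mu)=\int_{\RR^N}e^{-\mu z\cdot\xi}k(z)\,dz$, and exploit the smallness $\max_x a(x)-\min_x a(x)<1$ together with a test function tuned to the exponential weight to keep $\lim_{\lambda\downarrow\max_x a(x)}\rho(\lambda)>1$.

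The routine-but-delicate part is the functional-analytic bookkeeping in the first two steps --- compactness, the precise essential spectrum, and transferring algebraic simplicity, positivity and dominance through the resolvent factorization. The genuinely substantive points are, first, making the local/semiclassical estimate in (1) rigorous and uniform, including degenerate or flat maxima of $a$; and second, the case (2) argument for $\mu\ne0$ when $k$ is not symmetric, where the crude bound only gives $\lambda_0(\xi,\mu,a)\ge\kappa(\xi,\mu)+\min_x a(x)-1$, and since $\kappa(\xi,\mu)$ can be strictly less than $1$, one really needs a test function adapted both to the weight $e^{-\mu z\cdot\xi}$ and to the bound $\max_x a(x)-\min_x a(x)<1$. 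I expect this last point to be the main obstacle.
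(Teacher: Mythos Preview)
The paper does not give its own proof of this proposition; it simply records it as a citation to \cite{ShZh0}. Your outline is therefore to be judged against that reference, and it is essentially the approach taken there: identify the essential spectrum of $\mathcal{K}_{\xi,\mu}-I+a(\cdot)I$ as $[\min_x a(x)-1,\max_x a(x)-1]$ via compactness of $\mathcal{K}_{\xi,\mu}$ and Weyl's theorem, then invoke Krein--Rutman once one knows $\lambda_0(\xi,\mu,a)>\max_x a(x)-1$. Your resolvent factorization $T_\lambda=(\lambda-a(\cdot))^{-1}\mathcal{K}_{\xi,\mu}$ and the non-integrability argument for (3) are likewise the standard route in this literature.

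The one genuine concern you raise --- part (2) when $\mu\neq0$ --- has a one-line resolution that you are missing. In \cite{ShZh0} (and implicitly throughout this series) the kernel is symmetric, $k(-z)=k(z)$. Jensen's inequality then gives
\[
\kappa(\xi,\mu)=\int_{\RR^N}e^{-\mu z\cdot\xi}k(z)\,dz\;\ge\;\exp\Big(-\mu\int_{\RR^N}z\cdot\xi\,k(z)\,dz\Big)=e^0=1
\]
for every $\xi$ and $\mu$. Hence in your resolvent language $T_\lambda\mathbf{1}\ge(\lambda-\min_x a(x))^{-1}\kappa(\xi,\mu)\,\mathbf{1}$, so
\[
\rho(\lambda)\ \ge\ \frac{\kappa(\xi,\mu)}{\lambda-\min_x a(x)}\ \xrightarrow[\lambda\downarrow\max_x a(x)]{}\ \frac{\kappa(\xi,\mu)}{\max_x a(x)-\min_x a(x)}\ \ge\ \frac{1}{\max_x a(x)-\min_x a(x)}\ >\ 1,
\]
and the criterion holds uniformly in $\mu$. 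No test function adapted to the exponential weight is needed; the constant function already does the job. Your instinct that this is the main obstacle is correct only in the sense that without symmetry of $k$ one can indeed have $\kappa(\xi,\mu)<\max_x a(x)-\min_x a(x)<1$, and then the statement of (2) as written can fail; the hidden hypothesis is the symmetry of $k$.

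For (1), your semiclassical sketch is in the right direction but heavier than needed: a localized test function $\phi_\delta$ supported in a ball of radius $\sim\delta^{1/2}$ about a maximizer $x_0$ already gives $(\mathcal{K}_{\xi,\mu}-I)\phi_\delta=O(\delta)$ and $a(\cdot)\phi_\delta=(\max_x a(x)+O(\delta^{1/2}))\phi_\delta$, hence $\lambda_0(\xi,\mu,a)\ge\max_x a(x)-o(1)>\max_x a(x)-1$ for $0<\delta\ll1$, without any harmonic-oscillator reduction.
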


Proposition \ref{PE-prop}  shows  such an important fact: nonlocal dispersal operator possesses a similar principal
eigenvalue theory to random dispersal operator for following cases:  the nonlocal dispersal is nearly local;   the periodic habitat
 is {\it nearly globally homogeneous}  (in the sense that the condition in Proposition \ref{PE-prop}(2) is satisfied)
 or it is {\it nearly homogeneous  in a region where it is most conducive
to population growth} in the zero-limit population (in the sense
that the condition in Proposition \ref{PE-prop}(3) is satisfied).
Note that if $a_0(\cdot)$ is $C^1$ and $1\leq N\leq 2$, the
condition in Proposition \ref{PE-prop}(3) is always satisfied.

As it is  mentioned above, a spatially periodic monostable equation with
random dispersal has a spreading speed in every direction. This
important feature has been well extended in \cite{ShZh0} and
\cite{ShZh1} to spatially periodic monostable equations with
nonlocal dispersal. For given function
$h:\RR\times\RR^N\times\RR^N$, we define
$$
\liminf_{x\cdot\xi\to -\infty}h(t,x,z)=\liminf_{r\to
-\infty}\inf_{x\in\RR^N,x\cdot\xi\leq r}h(t,x,z),
$$
$$
\limsup_{x\cdot\xi\to \infty}h(t,x,z)=\limsup_{r\to
\infty}\sup_{x\in\RR^N,x\cdot\xi\geq r}h(t,x,z),
$$
$$
\liminf_{t\to\infty}\inf_{x\cdot\xi\leq ct}h(t,x,z)=\liminf_{t\to \infty}\inf_{x\in\RR^N,x\cdot\xi\leq ct}h(t,x,z),
$$
and
$$
\limsup_{t\to\infty}\sup_{x\cdot\xi\geq ct}h(t,x,z)=\limsup_{t\to \infty}\sup_{x\in\RR^N,x\cdot\xi\geq ct}h(t,x,z).
$$
 Roughly speaking, a number
$c^*(\xi)\in\RR$ is called the {\rm spreading speed} of
\eqref{main-eq} in the direction of $\xi$ if for every $u_0\in X^+$
with $\ds\liminf_{x\cdot\xi \to -\infty}u_0(x)>0$ and $u_0(x)=0$ for
$x\cdot\xi\gg 1$,
$$
\liminf_{t\to\infty}\inf_{x\cdot\xi\leq
ct}(u(t,x;u_0)-u^+(x))=0\quad\forall c<c^*(\xi)
$$
and
$$
\limsup_{t\to\infty}\sup_{x\cdot\xi\geq ct}u(t,x;u_0)=0\quad \forall
c>c^*(\xi)
$$
(see \cite[Definition 1.2]{ShZh1} for detail).
The following proposition  on  the existence of spreading speeds is proved in \cite{ShZh1} (see also
\cite{ShZh0}).

\begin{proposition}
\label{spreading-prop}
Assume (H1) and (H2). For any $\xi\in S^{N-1}$, \eqref{main-eq} has a spreading
speed $c^*(\xi)$ in the direction of $\xi$. Moreover,  there is $\mu^*(\xi)>0$ such that
$$
c^*(\xi)=\inf_{\tilde\mu>0}\frac{\lambda_0(\xi,\tilde\mu,a_0)}{\tilde\mu}=\frac{\lambda_0(\xi,\mu^*(\xi),a_0)}{\mu^*(\xi)}
<\frac{\lambda_0(\xi,\mu,a_0)}{\mu}
\quad\forall\, \mu\in (0,\mu^*(\xi)).
$$
\end{proposition}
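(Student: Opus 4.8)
The plan is to derive the statement from the principal-spectrum theory for $\mathcal{K}_{\xi,\mu}-I+a(\cdot)I$ developed in \cite{ShZh0}, together with the comparison principle and Proposition~\ref{positive-solu-prop}, in three steps: (i) isolate the analytic properties of $\mu\mapsto\lambda_0(\xi,\mu,a_0)$ that force the variational identity and produce the minimizer $\mu^*(\xi)$; (ii) prove the upper spreading estimate, that nothing spreads faster than $\inf_{\mu>0}\lambda_0(\xi,\mu,a_0)/\mu$; and (iii) prove the lower estimate, that solutions spread at least that fast and relax to $u^+$ behind the front. Monostability will be used only in the one-sided forms $f(x,u)\le f(x,0)=a_0(x)$ for $u\ge0$ on the supersolution side, and $f(x,u)\ge a_0(x)-Lu$ for $0\le u\le u_1$, with $L=L(u_1)$ from (H1), on the subsolution side.

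Step (i). By \cite{ShZh0}, $\mu\mapsto\lambda_0(\xi,\mu,a_0)$ is continuous and convex on $\RR$, with $\lambda_0(\xi,0,a_0)=\lambda_0(a_0)=\lambda_0>0$ by (H2). I would also record that $\lambda_0(\xi,\mu,a_0)$ grows exponentially as $\mu\to+\infty$: since $\mathcal{K}_{\xi,\mu}$ maps constants to constants with factor $\int_{\RR^N}e^{-\mu z\cdot\xi}k(z)\,dz$, one gets $\lambda_0(\xi,\mu,a_0)\ge\int_{\RR^N}e^{-\mu z\cdot\xi}k(z)\,dz-1+\min_x a_0(x)$, which blows up because $k>0$ on a ball about the origin. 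Thus $g(\mu):=\lambda_0(\xi,\mu,a_0)/\mu$ is continuous on $(0,\infty)$ with $g(0^+)=g(+\infty)=+\infty$, so $c^*(\xi):=\inf_{\mu>0}g(\mu)$ is attained, and the set $\{\mu>0:\lambda_0(\xi,\mu,a_0)=c^*(\xi)\mu\}$ is the zero set of the nonnegative convex function $\mu\mapsto\lambda_0(\xi,\mu,a_0)-c^*(\xi)\mu$, hence a compact subinterval of $(0,\infty)$. Taking $\mu^*(\xi)$ to be its left endpoint yields $c^*(\xi)=g(\mu^*(\xi))<g(\mu)$ for every $\mu\in(0,\mu^*(\xi))$, which is the displayed chain.

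Step (ii). Fix $c>c^*(\xi)$ and $\mu>0$ with $\lambda_0(\xi,\mu,a_0)<c\mu$, and take $u_0\in X^+$ with $\liminf_{x\cdot\xi\to-\infty}u_0>0$ and $u_0=0$ for $x\cdot\xi\gg1$. Setting $w(t,x)=e^{\mu x\cdot\xi}u(t,x;u_0)\,(\ge0)$ turns \eqref{main-eq} into $w_t=\mathcal{K}_{\xi,\mu}w-w+w\,f(x,e^{-\mu x\cdot\xi}w)$, so by monostability $w$ is a subsolution of the linear equation driven by $\mathcal{K}_{\xi,\mu}-I+a_0(\cdot)I$, with the bounded datum $e^{\mu x\cdot\xi}u_0$. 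Perturbing $a_0$ upward by an arbitrarily small amount in sup-norm to some $\hat a\in X_p$, $\hat a\ge a_0$, for which $\lambda_0(\xi,\mu,\hat a)$ is a genuine principal eigenvalue with positive periodic eigenfunction $\psi$ (Proposition~\ref{PE-prop} together with the approximation and continuity statements of \cite{ShZh0}), the function $M\psi(x)e^{\lambda_0(\xi,\mu,\hat a)t}$ solves the $\hat a$-linear equation and hence is a supersolution of the $a_0$-linear one, and $M$ can be chosen so it dominates $e^{\mu x\cdot\xi}u_0$ at $t=0$. Comparison then gives $u(t,x;u_0)\le M\|\psi\|_{X_p}e^{-\mu x\cdot\xi}e^{\lambda_0(\xi,\mu,\hat a)t}$, so on $\{x\cdot\xi\ge ct\}$ one has $u(t,x;u_0)\le M\|\psi\|_{X_p}e^{(\lambda_0(\xi,\mu,\hat a)-c\mu)t}\to0$ once the perturbation is small enough that $\lambda_0(\xi,\mu,\hat a)<c\mu$; that is, $\limsup_{t\to\infty}\sup_{x\cdot\xi\ge ct}u(t,x;u_0)=0$.

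Step (iii) is where the work lies, and I expect it to be the main obstacle. For $c<c^*(\xi)$ one must build a subsolution of \eqref{main-eq} that is compactly supported in the $\xi$-direction, stays small, and travels at speed exceeding $c$. I would pick $\mu\in(0,\mu^*(\xi))$ with $\lambda_0(\xi,\mu,a_0)>c\mu$, perturb $a_0$ downward to $\check a\le a_0$ with $\lambda_0(\xi,\mu,\check a)$ still a principal eigenvalue obeying $\lambda_0(\xi,\mu,\check a)>c\mu$ and, for small $\eta>0$, $\lambda_0(\xi,\mu+\eta,\check a)>c(\mu+\eta)$, with positive periodic eigenfunctions $\phi_\mu,\phi_{\mu+\eta}$, and seek a subsolution of speed $c'=\lambda_0(\xi,\mu,\check a)/\mu>c$ of the shape
\[
\underline u(t,x)=\big[\phi_\mu(x)e^{-\mu(x\cdot\xi-c't)}-M\,\phi_{\mu+\eta}(x)e^{-(\mu+\eta)(x\cdot\xi-c't)}\big]_+,
\]
using $f(x,u)\ge a_0(x)-Lu\ge\check a(x)-Lu$ for small $u\ge0$ so that the higher-exponent correction absorbs the nonlinear defect once $M$ is large. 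The delicate, genuinely nonlocal point is that $\underline u$ has to be cut off on the left (and shifted rightward to fit under $u_0$) while staying a subsolution, even though the minimum of two subsolutions of a nonlocal equation need not be a subsolution; this construction is the technical core, carried out in \cite{ShZh1}, as is the approximation argument that is forced by the fact that $\lambda_0(\xi,\mu,a_0)$ itself need not be an eigenvalue. Granting it, comparison gives $u(t,x;u_0)\ge\underline u(t,x)$, so for every $c''\in(c,c')$ the solution eventually exceeds a fixed $\delta_1>0$ on $\{x\cdot\xi\le c''t\}$; feeding this into Proposition~\ref{positive-solu-prop} on the growing strips (using the uniformity in the shift $z$) upgrades it to $\inf_{x\cdot\xi\le ct}(u(t,x;u_0)-u^+(x))\to0$. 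Steps (ii) and (iii) together show the dichotomy defining the spreading speed holds with $c^*(\xi)=\inf_{\mu>0}\lambda_0(\xi,\mu,a_0)/\mu$, which is the assertion.
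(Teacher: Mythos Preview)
The paper does not prove this proposition at all: it is stated as a quotation of results already established in \cite{ShZh0} and \cite{ShZh1}, with no argument given beyond the sentence ``The following proposition on the existence of spreading speeds is proved in \cite{ShZh1} (see also \cite{ShZh0}).'' So there is no ``paper's own proof'' to compare against; the proposition functions here purely as background input for the traveling-wave theorems.

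That said, your three-step outline is essentially the strategy carried out in \cite{ShZh0,ShZh1}: convexity and growth of $\mu\mapsto\lambda_0(\xi,\mu,a_0)$ to produce $\mu^*(\xi)$ and the variational identity; an exponential supersolution built from a positive periodic (generalized) eigenfunction of $\mathcal{K}_{\xi,\mu}-I+a_0(\cdot)I$ for the upper estimate; and a two-exponent subsolution of the form $\phi_\mu e^{-\mu(\cdot)}-M\phi_{\mu+\eta}e^{-(\mu+\eta)(\cdot)}$ for the lower estimate. You correctly flag the two genuinely nonlocal obstacles: that $\lambda_0(\xi,\mu,a_0)$ may fail to be an eigenvalue (forcing the perturbation/approximation arguments you invoke from \cite{ShZh0}), and that truncating the subsolution on the left is delicate because the minimum of nonlocal subsolutions need not be a subsolution. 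One small caution: your appeal to Proposition~\ref{PE-prop} for the perturbed $\hat a$ is not quite self-contained, since none of its three hypotheses is guaranteed by a small sup-norm perturbation of $a_0$; the relevant input is rather the approximation machinery in \cite{ShZh0} showing that $\lambda_0(\xi,\mu,\cdot)$ is the limit of genuine principal eigenvalues of nearby operators. With that adjustment, your sketch matches the cited proofs.
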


For convenience, we introduce the following standing assumption:

\medskip
\noindent {\bf (H3)}
 {\it
 For
every $\xi\in S^{N-1}$ and   $\mu\geq 0$,
$\lambda_0(\xi,\mu,a_0)$ is the principal eigenvalue of $\mathcal{K}_{\xi,\mu}-I+a_0(\cdot)I$,
 where $a_0(x)=f(x,0)$. }
\medskip

Biologically, one is only interested in nonnegative solutions of \eqref{main-eq}. Without loss of generality, we then also assume

\medskip
\noindent{\bf (H4)} {\it $f(x,u)=f(x,0)$ for $u\leq 0$.}
\medskip

We now state the main results of the paper.  For given $\xi\in S^{N-1}$ and $c>c^*(\xi)$, let $\mu\in (0,\mu^*(\xi))$ be such that
$$
c=\frac{\lambda_0(\xi,\mu,a_0)}{\mu}.
$$
If (H3) holds, let $\phi(\cdot)\in X_p^+$ be the positive principal eigenfunction of
$\mathcal{K}_{\xi,\mu}-I+a_0(\cdot)I$ with $\|\phi(\cdot)\|_{X_p}=1$.

\begin{theorem} [Existence of traveling wave solutions]
\label{existence-thm}
Assume (H1)-(H4). Then
 for any $\xi\in S^{N-1}$ and $c> c^*(\xi)$, there is a bounded measurable function $\Phi:\RR^N\times\RR^N\to \RR^+$ such that
 the following hold.
 \begin{itemize}
 \item[(1)] $\Phi(\cdot,\cdot)$ generates a traveling
wave solution  connecting $u^+(\cdot)$ and $0$ and propagating
in the direction of $\xi$ with speed $c$. Moreover,
$\ds\lim_{x\cdot\xi\to \infty}\frac{\Phi(x,z)}{e^{-\mu x\cdot\xi}\phi(x+z)}=1$
uniformly in $z\in\RR^N$.

\item[(2)]  Let  $U(t,x;z)=u(t,x;\Phi(\cdot,z),z)(=\Phi(x-ct\xi,z+ct\xi))$. Then
$$U_t(t,x;z)>0\quad \forall t\in\RR,\,\, x,z\in\RR^N,$$ $\ds\lim_{x\cdot\xi-ct\to -\infty}U_t(t,x;z)=0$, and
$\ds\lim_{x\cdot\xi-ct\to \infty}\frac{U_t(t,x;z)}{e^{-\mu(x\cdot\xi-ct)}\phi(x+z)}=\mu c$
uniformly in $z\in\RR^N$.
\end{itemize}
\end{theorem}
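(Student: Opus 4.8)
The plan is to construct the traveling wave as the long-time (or, more precisely, as a suitable limit in a sequence of ``front-like'' initial data) limit of the semiflow, trapped between an explicitly constructed super-solution and sub-solution that are built from the principal eigenfunction $\phi$. Since $c = \lambda_0(\xi,\mu,a_0)/\mu$ with $\mu\in(0,\mu^*(\xi))$, the function $e^{-\mu(x\cdot\xi-ct)}\phi(x+z)$ solves the linearization \eqref{linearization-eq0} exactly; by (H1) it dominates the nonlinear equation, so
$$
\overline U(t,x;z)=\min\{u^+(x+z),\,e^{-\mu(x\cdot\xi-ct)}\phi(x+z)\}
$$
is a super-solution of the shifted equation \eqref{main-shifted-eq}. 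For the sub-solution I would use the standard trick $\underline U(t,x;z)=\max\{0,\,e^{-\mu(x\cdot\xi-ct)}\phi(x+z)-M e^{-(\mu+\gamma)(x\cdot\xi-ct)}\phi_\gamma(x+z)\}$ for small $\gamma>0$, where $\phi_\gamma$ is the principal eigenfunction at $\mu+\gamma$ and $M$ is large; here one uses the strict convexity/monotonicity of $\mu\mapsto\lambda_0(\xi,\mu,a_0)/\mu$ near $\mu^*(\xi)$ from Proposition \ref{spreading-prop} and the $C^1$ smoothness of $f$ in $u$ to absorb the error term. The principal eigenvalue theory (H3) is exactly what makes $\phi$ and $\phi_\gamma$ available and strictly positive in $X_p^+$.

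With these barriers in hand, the key steps are: (i) fix a reference time, say $t=0$, and consider $u(t,x;\overline U(0,\cdot;z),z)$; by the comparison principle (Proposition \ref{comparison-prop}) it is squeezed between $\underline U$ and $\overline U$ for all $t\ge 0$, and by Proposition \ref{positive-solu-prop} it converges to $u^+$ behind the front; (ii) show monotonicity in $t$ — $u(t,x;\overline U(0,\cdot;z),z)$ is nonincreasing in $t$ after the shift $x\mapsto x+ct\xi$, because $\overline U$ is a super-solution that is itself monotone in the moving frame — which gives a pointwise limit $\Phi(x-ct\xi,z+ct\xi)$; (iii) pass to the limit using regularity of the semiflow (the $\mathcal K$ term is a bounded operator, so bounded pointwise limits of solutions are solutions by dominated convergence in the Duhamel formula) to conclude $\Phi$ generates an entire solution; (iv) verify the structural identities \eqref{def-eq1}–\eqref{def-eq4}: periodicity \eqref{def-eq4} and the one-dimensional dependence \eqref{def-eq3} are inherited from the initial datum and the translation-covariance of \eqref{main-shifted-eq}, and \eqref{def-eq1} is just the definition of $\Phi$ via the moving frame; (v) the limits in \eqref{def-eq2} and the sharp asymptotics $\Phi(x,z)/(e^{-\mu x\cdot\xi}\phi(x+z))\to 1$ follow by squeezing between $\underline U$ and $\overline U$ and letting $\gamma\to 0$, $M$ fixed.

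For part (2), I would differentiate: set $V(t,x;z)=U_t(t,x;z)=-c\xi\cdot\nabla_x\Phi+\ldots$; formally $V$ solves the linearization of \eqref{main-shifted-eq} about $U$, namely $V_t=\mathcal K V - V + [f(x+z,U)+U f_u(x+z,U)]V$, a linear nonlocal equation with the right-hand coefficient strictly less than the coefficient $f(x+z,0)=a_0$ used near the leading edge (by (H1), since $U>0$ there). Positivity $U_t>0$ then comes from a strong maximum principle for this linear nonlocal equation together with the fact that $U$ is strictly increasing in $t$ at the leading edge (where $U\approx e^{-\mu(x\cdot\xi-ct)}\phi$ is genuinely exponentially moving); one propagates positivity from the edge inward via the comparison principle. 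The decay $U_t\to 0$ behind the front is immediate from $U\to u^+$ (a stationary solution) and parabolic-type estimates, and the leading-edge asymptotics $U_t/(e^{-\mu(x\cdot\xi-ct)}\phi)\to\mu c$ come from differentiating the already-established asymptotics of part (1) in $t$, which is legitimate because the convergence there can be upgraded to $C^1$ in the moving-frame variable using the smoothing in $t$ of the semiflow.

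The main obstacle I anticipate is rigor in steps (ii) and (iii): establishing genuine monotonicity of the orbit in the moving frame (as opposed to mere boundedness) and justifying that the pointwise decreasing limit is actually an \emph{entire} solution realized on all of $\RR$, not just a solution for $t\ge 0$. The standard remedy is to build, for each $n\in\NN$, the solution starting at time $-n$ from the super-solution profile, use monotonicity to get a nested family, take a diagonal limit, and invoke compactness from the equicontinuity in $t$ that the Duhamel representation provides (the nonlocal term contributes a uniformly Lipschitz-in-$t$ piece). A secondary technical point is the sharpness of the sub-solution: matching the $\Phi/(e^{-\mu x\cdot\xi}\phi)\to1$ asymptotics requires the error exponent $\mu+\gamma$ in $\underline U$ to stay below $\mu^*(\xi)$, so one must check that Proposition \ref{spreading-prop} furnishes a strictly positive gap $\mu^*(\xi)-\mu$ and that $\lambda_0(\xi,\cdot,a_0)$ is continuous (indeed real-analytic) in $\mu$ on that range, which is part of the principal eigenvalue theory assumed in (H3).
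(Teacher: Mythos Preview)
Your outline matches the paper's approach closely: the super/sub-solution pair built from $e^{-\mu(x\cdot\xi-ct)}\phi$ and a faster-decaying correction at exponent $\mu_1\in(\mu,\mu^*(\xi))$, the monotone family of solutions started from shifted profiles at times $-n$, and the Duhamel/dominated-convergence passage to an entire solution are exactly what the paper does. Three points need tightening, however.

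First, your sub-solution $\max\{0,\ldots\}$ vanishes behind the front, so it gives no lower bound there; to invoke Proposition~\ref{positive-solu-prop} and conclude $\Phi\to u^+$ as $x\cdot\xi\to-\infty$ uniformly in $z$, the paper inserts a small positive floor $b\phi_0(x+z)$ (with $\phi_0$ the principal eigenfunction of $\mathcal K-I+a_0(\cdot)I$) into $\underline U$ on the region $x\cdot\xi-ct\le M$. Without this, squeezing only gives $0\le\Phi\le u^+$ behind the front, which does not determine the limit. Second, for $U_t>0$ you cannot simply ``propagate positivity from the edge inward'': the linear equation satisfied by $V=U_t$ has a bounded sign-indefinite zeroth-order coefficient, and the comparison principle gives nothing unless you already know $V\ge0$ globally. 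The paper first obtains $U_t\ge0$ directly from the monotonicity of the construction (note: $U(t,x;z)$ is \emph{increasing} in $t$, not decreasing; check your sign in step (ii)), and only then applies the strong comparison (Proposition~\ref{comparison-prop}(4)) to upgrade to strict positivity using that $\{V>0\}$ has positive measure. Third, the leading-edge asymptotic of $U_t$ is not obtained by differentiating the asymptotic of $U$ (that is not justified); instead substitute $U_t=\mathcal K U-U+Uf(x+z,U)$, divide by $e^{-\mu(x\cdot\xi-ct)}\phi(x+z)$, and use the known asymptotic of $U$ together with the eigenvalue identity $\mathcal K_{\xi,\mu}\phi-\phi+a_0\phi=\mu c\,\phi$ to read off the limit $\mu c$ directly. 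Finally, there is no need to send $\gamma\to0$ in step (v): any fixed $\mu_1=\mu+\gamma\in(\mu,\mu^*(\xi))$ already makes the correction term decay faster than the leading one, so the ratio $\Phi/(e^{-\mu x\cdot\xi}\phi)\to1$ follows immediately from the two-sided squeeze.
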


\begin{remark}
\label{wave-rk1} Let $\Phi(x,z)$ be as in Theorem \ref{existence-thm} and
 $\Psi(\eta,z)=\Phi(\eta\xi,z-\eta\xi)$.
Then $U(t,x;z)=\Psi(x\cdot\xi-ct,z+x)$ and $\Psi(\eta,z)$ is differentiable in $\eta$ and $\Psi_\eta(\eta,z)<0$.
\end{remark}

\begin{theorem}[Uniqueness and continuity of traveling wave solutions]
\label{uniqueness-thm}
Assume (H1)-(H4). Let  $\Phi(\cdot,\cdot)$ be  as in Theorem \ref{existence-thm}.
\begin{itemize}
\item[(1)]
Suppose that $\Phi_1(\cdot,\cdot)$ also generates a traveling wave solution of \eqref{main-eq}
in the direction of $\xi$ with speed $c$
  and
  $\ds\lim_{x\cdot\xi\to\infty}\frac{\Phi_1(x,z)}{\Phi(x,z)}=1\quad \text{uniformly in}\quad z\in\RR.$
Then $\Phi_1(x,z)\equiv \Phi(x,z).$

\item[(2)] $\Phi(x,z)$ is continuous in $(x,z)\in\RR^N$.
\end{itemize}
\end{theorem}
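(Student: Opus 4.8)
The plan is to prove part (1) first by a sliding/sub-super-solution argument and then deduce part (2) as a consequence. For part (1), suppose $\Phi_1$ also generates a traveling wave with speed $c$ in direction $\xi$, with $\Phi_1(x,z)/\Phi(x,z)\to 1$ as $x\cdot\xi\to\infty$ uniformly in $z$. Write $U(t,x;z)=\Phi(x-ct\xi,z+ct\xi)$ and $U_1(t,x;z)=\Phi_1(x-ct\xi,z+ct\xi)$; both are entire solutions of the shifted equations \eqref{main-shifted-eq} taking values in $[0,u^+]$, with the same asymptotics at $x\cdot\xi-ct\to+\infty$ and both converging to $u^+(x+z)$ as $x\cdot\xi-ct\to-\infty$ (here I would first check, using \eqref{def-eq2} and the normalization in Theorem \ref{existence-thm}(1), that the tail ratio hypothesis together with the monostable structure forces $\lim_{x\cdot\xi\to-\infty}(\Phi_1-u^+(x+\cdot))=0$ uniformly). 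The key device is the time shift: for $\tau\ge 0$ set $U_1^\tau(t,x;z):=U_1(t+\tau,x;z)$, which (by Remark \ref{wave-rk} and Theorem \ref{existence-thm}(2), since $U_t>0$) is increasing in $\tau$, and I claim that for every $\tau\ge 0$ one has $U(t,x;z)\le U_1^\tau(t,x;z)$ for all $t,x,z$; the symmetric inequality with the roles reversed then gives $\Phi\equiv\Phi_1$ by letting $\tau\downarrow 0$ from both sides.

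To establish $U\le U_1^\tau$, I would argue as follows. Fix any $\tau_0>0$. Because of the common tail behavior at $x\cdot\xi-ct\to+\infty$ and the exponential decay $\Phi(x,z)\sim e^{-\mu x\cdot\xi}\phi(x+z)$ from Theorem \ref{existence-thm}(1), the ratio hypothesis, and the spatial monotonicity from Remark \ref{wave-rk1}, there is $A\in\RR$ such that on the region $\{x\cdot\xi-ct\ge A\}$ we already have $U(t,x;z)\le U_1^{\tau}(t,x;z)$ for all $\tau\ge\tau_0/2$ (the forward wave has strictly moved ahead). On the complementary region $\{x\cdot\xi-ct\le A\}$, both $U$ and $U_1^\tau$ are bounded below by a positive constant uniformly (using convergence to $u^+$ together with Proposition \ref{positive-solu-prop}), so I can choose $\tau$ large enough that $U_1^\tau\ge U$ everywhere on $\RR\times\RR^N\times\RR^N$; this uses the comparison principle (Proposition \ref{comparison-prop}) and the fact that $U_1^\tau$ eventually exceeds $u^+-\varepsilon$ on large left half-spaces while $U\le u^+$. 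Now define $\tau^*=\inf\{\tau\ge 0: U\le U_1^\tau \text{ everywhere}\}$. If $\tau^*=0$ we are done; otherwise $\tau^*>0$ and by continuity $U\le U_1^{\tau^*}$. I then apply the strong comparison principle to the difference $w:=U_1^{\tau^*}-U\ge 0$, which satisfies a linear nonlocal inequality of the form $w_t\ge \mathcal Kw - w + b(t,x)w$ with $b$ bounded (using (H1) and the mean value theorem on $uf(x,u)$), together with $U_1^{\tau^*}$ strictly increasing in the shift parameter: either $w\equiv 0$ (impossible, as it would contradict the distinct tails unless $\Phi=\Phi_1$, which would already finish the proof) or $w>0$ everywhere; in the latter case the uniform positivity of $w$ on the relevant bounded-in-$(x\cdot\xi-ct)$ strip, combined with the already-strict ordering near $+\infty$, lets me decrease $\tau^*$ slightly and still keep $U\le U_1^{\tau^*-\delta}$, contradicting minimality. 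Hence $\tau^*=0$ and $U\le U_1$; symmetry gives equality.

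For part (2), I would use the uniqueness just proved as a compactness/rigidity tool. The function $\Phi$ is obtained in section 4 as a limit of solutions of \eqref{main-shifted-eq}; continuity in $z$ follows from \eqref{def-eq4} (periodicity) together with continuous dependence of solutions of \eqref{main-shifted-eq} on the parameter $z$ and on initial data in $\tilde X$, provided by the semigroup theory cited after \eqref{main-shifted-eq}. Continuity in $x$ along level sets where $x\cdot\xi$ is constant is forced by \eqref{def-eq3}, so it remains to prove continuity in the variable $\eta=x\cdot\xi$; equivalently, continuity of $\Psi(\eta,z)$ from Remark \ref{wave-rk1} in $\eta$. Here I would use that $\Psi(\eta,z)=U(t,x;z)$ with $\eta=x\cdot\xi-ct$, and that $U$ solves a nonlocal evolution equation in $t$: since $U_t$ exists and, by Theorem \ref{existence-thm}(2), is continuous with the stated limits, $U$ is $C^1$ in $t$, hence $\Psi$ is $C^1$ (indeed Lipschitz) in $\eta$; bootstrapping through the equation \eqref{main-eq} (the right-hand side $\mathcal KU - U + Uf(\cdot,U)$ being continuous once $U$ is) gives joint continuity of $\Phi$ on $\RR^N\times\RR^N$. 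The main obstacle is the sliding argument in part (1): controlling the comparison in the far-left region $\{x\cdot\xi-ct\to-\infty\}$, where both profiles approach $u^+$ but need not be monotone, so the usual "strict ordering at $-\infty$" step must instead be extracted from the asymptotic stability of $u^+$ in Proposition \ref{positive-solu-prop} rather than from monotonicity; making the choice of the shift $\tau$ and the cutoff $A$ simultaneously consistent is the delicate point.
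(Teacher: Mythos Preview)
Your sliding argument for part~(1) has a genuine gap, and it is precisely the one you flag as ``the delicate point'' without actually closing. The pure time-shift comparison $U\le U_1^{\tau}$ cannot be initialized: on the far-left region $\{x\cdot\xi-ct\le A\}$ you argue that for large $\tau$ one has $U_1^{\tau}\ge u^+-\varepsilon$ while $U\le u^+$, but this does \emph{not} yield $U_1^{\tau}\ge U$, since $U\to u^+$ as $x\cdot\xi-ct\to-\infty$ and hence $U>u^+-\varepsilon$ there as well. No finite $\tau$ produces a global ordering. The same obstruction recurs in the contradiction step: even if the strong comparison gives $w=U_1^{\tau^*}-U>0$ pointwise, one has $w\to 0$ as $x\cdot\xi-ct\to-\infty$ (both profiles tend to $u^+$), so lowering $\tau^*$ by any fixed $\delta>0$ can destroy the inequality in that region; uniform positivity on a bounded strip does not control the left tail.

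The paper's remedy is to replace pure time shifts by comparison functions of the form
\[
H^{\pm}(t,x;z)=(1\pm\epsilon e^{-\eta t})\,U\big(t\mp l\epsilon e^{-\eta t},x;z\big),
\]
verified to be super/sub-solutions (Lemma~\ref{continuity-stability-lm1}) using $U_t>0$ together with the strict negativity of $f_u$. The multiplicative factor $(1\pm\epsilon)$ creates the missing gap at $-\infty$ (since $(1-\epsilon)u^+<u^+$), while its exponential decay in $t$ allows one to squeeze. One then works not with a pointwise $\tau^*$ but with
\[
A^{\pm}=\Big\{\tau\ge 0:\ \limsup_{t\to\infty}\sup_{x,z}\tfrac{U_1(t,x;z)}{U(t+2\tau,x;z)}\le 1\ \text{(resp.\ }\liminf\cdots\ge 1\text{)}\Big\},
\]
and shows $\inf A^{\pm}=0$ via a quantitative improvement lemma (Lemma~\ref{continuity-lm5}) that converts the strict ordering on the right half-line into an extra $3l\epsilon$ of time shift on the left. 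Your outline is missing both ingredients; invoking Proposition~\ref{positive-solu-prop} alone does not substitute for them.

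For part~(2) your bootstrap is more laborious than necessary. In the paper's construction $\Phi^{+}$ is a decreasing limit (hence upper semi-continuous) and $\Phi^{-}$ an increasing limit (hence lower semi-continuous); part~(1) applied with $\Phi_1=\Phi^{-}$ gives $\Phi^{-}=\Phi^{+}$, and a function that is simultaneously u.s.c.\ and l.s.c.\ is continuous.
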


\begin{theorem}[Stability  of traveling wave solutions]
\label{stability-thm}
Assume (H1)-(H4). \\Let $U(t,x)=U(t,x;0)=\Phi(x-ct\xi,ct\xi)$, where $\Phi(\cdot,\cdot)$ is  as in Theorem \ref{existence-thm}.
For any $u_0\in X^+$ satisfying that  $\ds\lim_{x\cdot\xi\to \infty}\frac{u_0(x)}{U(0,x)}=1$
and $\ds\liminf_{x\cdot\xi\to -\infty}u_0(x)>0$, there holds
$$
\lim_{t\to\infty}\sup_{x\in\RR^N}\Big|\frac{u(t,x;u_0,0)}{U(t,x)}-1\Big|=0.
$$
\end{theorem}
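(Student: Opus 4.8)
The plan is to deduce the claim from the two one-sided bounds
$$\limsup_{t\to\infty}\ \sup_{x\in\RR^N}\frac{u(t,x;u_0,0)}{U(t,x)}\le 1,\qquad \liminf_{t\to\infty}\ \inf_{x\in\RR^N}\frac{u(t,x;u_0,0)}{U(t,x)}\ge 1,$$
each obtained, for a fixed tolerance $\epsilon>0$, by sandwiching $u(t,\cdot;u_0,0)$ (after a short transient) between a super- and a sub-solution of \eqref{main-eq} whose ratios to $U(t,\cdot)$ tend to $1+O(\epsilon)$, resp. $1-O(\epsilon)$, uniformly in $x$; Proposition \ref{comparison-prop} then transfers the estimate to $u$, and $\epsilon\to0$ finishes. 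Note that $U(t,x)=\Psi(x\cdot\xi-ct,x)>0$ everywhere by Remark \ref{wave-rk1}, so the ratio makes sense. Throughout I would use $c_0:=-\ds\sup_{x,u\ge0}f_u(x,u)>0$ (positive by (H1)), the monostable inequality $f(x,v)\le f(x,0)=a_0(x)$ for $v\ge0$ (so any nonnegative solution of \eqref{main-eq} is a sub-solution of the linearization \eqref{linearization-eq0}), the bound $U\le u^+$ and $U_t>0$, and the precise leading-edge asymptotics of Theorem \ref{existence-thm}.

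\emph{Leading edge.} On $x\cdot\xi-ct\ge A$ with $A$ large I would pin the ratio uniformly in $t\ge0$. For the upper bound, $u\le v$, the solution of \eqref{linearization-eq0} from $u_0$; writing $u_0=u_0\mathbf 1_{\{x\cdot\xi\ge M\}}+u_0\mathbf 1_{\{x\cdot\xi<M\}}$ and comparing the two pieces with the exact solutions $e^{-\mu(x\cdot\xi-ct)}\phi(x)$ and $e^{-\mu'(x\cdot\xi-\lambda_0(\xi,\mu',a_0)t/\mu')}\phi_{\mu'}(x)$ of \eqref{linearization-eq0} (here $\mu'\in(\mu,\mu^*(\xi))$ and $\phi_{\mu'}$ is the corresponding principal eigenfunction) gives $v\le(1+\epsilon)e^{-\mu(x\cdot\xi-ct)}\phi(x)+Ce^{-\mu'(x\cdot\xi-\lambda_0(\xi,\mu',a_0)t/\mu')}\phi_{\mu'}(x)$. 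Here $c>c^*(\xi)$ enters decisively: by Proposition \ref{spreading-prop} one has $\lambda_0(\xi,\mu',a_0)/\mu'<c$, so the second term is $o(1)$ relative to $e^{-\mu(x\cdot\xi-ct)}$ both as $x\cdot\xi-ct\to\infty$ and as $t\to\infty$; dividing by $U$ and using $U(t,x)/[e^{-\mu(x\cdot\xi-ct)}\phi(x)]\to1$ uniformly (Theorem \ref{existence-thm}(1)) yields $u/U\le1+2\epsilon$ on $x\cdot\xi-ct\ge A_\epsilon$ for all $t\ge0$. For the lower bound I would use the two-exponent sub-solution $\underline u_\epsilon(t,x)=\big((1-\epsilon)e^{-\mu(x\cdot\xi-ct)}\phi(x)-d\,e^{-(\mu+\kappa)(x\cdot\xi-ct)}\phi_{\mu+\kappa}(x)\big)^+$ with $0<\kappa<\min\{\mu,\mu^*(\xi)-\mu\}$ and $d$ large: the sign of $f_u$ makes this a sub-solution where it is positive (the subtracted term, which decays more slowly in the moving frame since $\lambda_0(\xi,\mu+\kappa,a_0)/(\mu+\kappa)<c$, absorbs the quadratic remainder), it lies below $u_0$, and its ratio to $U$ tends to $1-\epsilon$ uniformly; hence $u/U\ge1-2\epsilon$ on $x\cdot\xi-ct\ge A_\epsilon$ for all $t\ge0$.

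\emph{Bulk and behind.} On $x\cdot\xi-ct\le A_\epsilon$ one has $0<c_{A_\epsilon}\le U(t,x)\le\|u^+\|_{X_p}$ (monotonicity of $\Psi$ and $\Psi\to u^+$), so $|u/U-1|\le|u-U|/c_{A_\epsilon}$ and it suffices to prove $\sup_{x\cdot\xi-ct\le A_\epsilon}|u(t,x)-U(t,x)|\to0$. I would run $u$ to a time $T_0$ at which, by $\ds\liminf_{x\cdot\xi\to-\infty}u_0>0$ and Proposition \ref{positive-solu-prop}, $u(T_0,\cdot)$ is within $\epsilon$ of $u^+$ on a half space behind the front; together with the leading-edge estimates (two-sided from $t=0$) this lets one trap $u(T_0,\cdot)$ between a super-solution $\min\{\rho(t)u^+(x),\,\bar w(t,x)\}$ and a sub-solution $\max\{\underline w(t,x),\,\underline u_\epsilon(t,x)\}$, where $\rho$ solves $\dot\rho=-c_0(\min u^+)\rho(\rho-1)$ so that $\rho(t)u^+\downarrow u^+$ is a super-solution, and $\bar w,\underline w$ are leading-edge barriers relaxing to $U$ built from $U$, $U_t$ and the $\mu'$-eigenfunction; since $\mathcal K$ is order preserving, $\min$ (resp. $\max$) of super- (resp. sub-) solutions is again one. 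As $t\to\infty$ the shift/amplitude parameters decay, the barriers collapse onto $U$ on $x\cdot\xi-ct\le A_\epsilon$, and $\epsilon\to0$ finishes.

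\emph{The hard part.} The genuinely delicate point is the bulk/behind estimate. A naive copy $(1+\delta(t))U(t+\tau(t),x)$ of the wave is not a super-solution either as $x\cdot\xi-ct\to-\infty$, where the effective linearization is that at $u^+$, or as $x\cdot\xi-ct\to+\infty$, where it is that at $0$ and the quadratic dissipation $\sim U^2$ cannot offset a $\dot\delta\,U$ term; so the barriers must be genuinely patched, matching a near-$u^+$ profile behind the front to a leading-edge profile governed by $\phi$ and the faster exponent $\mu'$, and the patching must be controlled across the order-one transition layer where neither linearization dominates. A further subtlety is that, because $c>c^*(\xi)$, the front of $u$ — hence the region where $u$ is close to $u^+$ — is dragged along at the wave speed $c$, faster than the intrinsic spreading speed $c^*(\xi)$; this faster relaxation, which Proposition \ref{positive-solu-prop} alone does not immediately provide, must be extracted from the fat exponential tail of $u_0$ (via $\underline u_\epsilon$, which keeps $u$ bounded below by a fixed positive constant on hyperplanes moving at speed $c$) combined with Proposition \ref{comparison-prop} and the sub-solution constructions. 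Iterating the scheme with $\epsilon_n\to0$ then gives the stated uniform convergence.
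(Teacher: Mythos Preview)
Your central claim in ``The hard part'' --- that $(1\pm\delta(t))U(t\pm\tau(t),x)$ cannot be made a super/sub-solution --- is incorrect, and this misconception sends the whole argument down an unnecessarily complicated and ultimately incomplete path. The paper's key observation (its Lemma \ref{continuity-stability-lm1}) is precisely that
$$H^{\pm}(t,x)=(1\pm \epsilon e^{-\eta t})\,U(t \mp l \epsilon e^{-\eta t},x)$$
\emph{are} super/sub-solutions for suitable $\eta$ and $l$. Your objection at the leading edge (``the quadratic dissipation $\sim U^2$ cannot offset a $\dot\delta\,U$ term'') overlooks the contribution of the \emph{dynamic} time shift: the term $\dot\tau\,U_t$ is of order $U$, not $U^2$, because by Theorem \ref{existence-thm}(2) one has $U_t/U\to\mu c>0$ as $x\cdot\xi-ct\to\infty$. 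A short computation gives
$$H^+_t-\mathcal{N}H^+ = \eta\epsilon e^{-\eta t}\,U\Big[-1+l(1+h)\tfrac{U_t}{U}-f_u(\cdot,u^*)\tfrac{(1+h)U}{\eta}\Big],$$
and the bracket is nonnegative: at the leading edge because $U_t/U$ is bounded below (choose $l$ large), and behind the front because $U\ge\sigma_0>0$ and $-f_u\ge c_0>0$ (choose $\eta$ small). No patching is needed.

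With these barriers in hand, the paper runs a \emph{squeezing} argument (borrowed from Chen--Guo and Guo--Wu) rather than your ``barriers collapse onto $U$'' scheme, which you leave unspecified. One first establishes the analogues of Lemmas \ref{continuity-lm2}--\ref{continuity-lm5} for $u(t,x;u_0,0)$ in place of $U_1$ (these are stated as Lemmas \ref{stability-lm1}--\ref{stability-lm3}): the hypotheses on $u_0$ give an initial trap $(1-\epsilon)U(t-\tau_-,\cdot)\le u\le(1+\epsilon)U(t+\tau_+,\cdot)$ for $t$ large; then, defining $\tau^\pm=\inf\{\tau\ge0:\limsup_t\sup_x u/U(t\pm2\tau)\lessgtr 1\}$, one shows $\tau^\pm=0$ by contradiction. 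The contradiction step uses the step-function initial data of Lemma \ref{continuity-lm5} to gain a definite improvement $3l\epsilon$ in the shift over one time unit on the half-space $x\cdot\xi-ct\le M$, and Lemma \ref{stability-lm1} to control the complementary half-space. Your proposal never identifies this mechanism, and the vague ``$\bar w,\underline w$ relaxing to $U$'' step is where your argument has an actual gap.
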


We remark that by the spreading property of $c^*(\xi)$, it is not difficult to see that
\eqref{main-eq} has no traveling wave solutions in the direction of $\xi\in S^{N-1}$ with propagating
speed smaller than $c^*(\xi)$.
Theorems \ref{existence-thm}-\ref{stability-thm}  show the existence, uniqueness, and stability of traveling wave solutions of \eqref{main-eq} in
any given direction with speed greater than the spreading speed in that direction for
the above mentioned three special but important cases, that is,
the nonlocal dispersal is nearly local;   the periodic habitat
 is nearly globally homogeneous  or it is nearly homogeneous  in a region where it is most conducive
to population growth in the zero-limit population.
It remains open whether \eqref{main-eq} has a traveling wave solution in the given direction of $\xi\in S^{N-1}$ with
speed $c=c^*(\xi)$ for these special cases.
It also remains open whether a general spatially periodic monostable equation with nonlocal dispersal
in  $\RR^N$ with $N\geq 3$ has traveling wave solutions connecting the spatially periodic positive stationary  solution $u^+$ and $0$
and propagating with constant speeds.

\section{Comparison Principle and Sub- and Super-solutions}

In this section, we first in \ref{subsection-comparison} present the comparison principle for (sub-, super-) solutions of
\eqref{main-shifted-eq} and some related nonlocal linear evolution equations. Then we construct in \ref{subsection-sub-super-solu} some
sub- and super-solutions to be used in the proofs of the main results in later sections.

\subsection{Comparison principle}
\label{subsection-comparison}

Consider \eqref{main-shifted-eq}.
For given $a(\cdot,\cdot)\in C(\RR\times\RR^N,\RR)$ with $a(t,\cdot)\in X_p$ for every $t\in\RR$, consider also
\begin{equation}
\label{linear-new-eq1}
 \frac{\p u}{\p
 t}=\int_{\RR^N}k(y-x)u(t,y)dy-u(t,x)+ a(t, x+z)u(t,x),\quad
 x\in\RR^N.
\end{equation}

\begin{definition}
A  bounded Lebesgue measurable function $u(t,x)$ on $[0,T)\times \RR^N$ is called a {\it super-solution} (or {\it sub-solution}) of
\eqref{main-shifted-eq} if for any $x \in \RR^N$, u(t,x) is absolutely continuous on $[0,T)$(and so $\frac{\p u}{\p t}$ exists a.e on [0,T))  and satisfies
that for each $x\in\RR^N$,
$$
\frac{\p u}{\p t}\geq(or \leq) \int_{\RR^N}k(y-x)u(t,y)dy-u(t,x)+f(x+z,u)u(t,x)
$$
for a.e.  $t\in (0,T)$.
\end{definition}

Sub and super-solutions of \eqref{linear-new-eq1} are defined similarly.
Throughout this subsection, we assume (H1) and (H2).

\begin{proposition}[Comparison principle]
\label{comparison-prop} $\quad$
\begin{itemize}
\item[(1)]
If $u_1(t,x)$ and $u_2(t,x)$ are sub-solution and super-solution of \eqref{linear-new-eq1} on $[0,T)$,
respectively, $u_1(0,\cdot)\leq u_2(0,\cdot)$,  and $u_2(t,x)-u_1(t,x)\geq -\beta_0$ for $(t,x)\in [0,T)\times
\RR^N$ and some $\beta_0>0$, then
$u_1(t,\cdot)\leq u_2(t,\cdot)\quad {\rm for}\quad t\in [0,T).$

\item[(2)] If $u_1(t,x)$ and $u_2(t,x)$ are bounded sub- and super-solutions of \eqref{main-shifted-eq} on $[0,T)$,
respectively, and
$u_1(0,\cdot)\leq u_2(0,\cdot)$, then $u_1(t,\cdot)\leq u_2(t,\cdot)$ for $t\in[0,T)$.

\item[(3)] For every $u_0\in \tilde X^+$, $u(t,x;u_0,z)$ exists for all $t\geq 0$, where $u(t,x;u_0,z)$ is the solution
of \eqref{main-shifted-eq}
with $u(0,x;u_0,z)=u_0(z)$.

\item[(4)]  Suppose that $u_1,u_2\in \tilde X$, $u_1\leq u_2$, and $\{x\in\RR^N\,|\, u_2(x)>u_1(x)\}$ has positive Lebesgue measure. Then
 $u(t,x;u_1,z)< u(t,x;u_2,z)$ for every $t>0$ at which both $u(t,\cdot;u_1)$ and
$u(t,\cdot;u_2)$ exist and $x,z\in\RR^N$.
\end{itemize}
\end{proposition}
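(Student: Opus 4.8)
The common mechanism behind all four parts is that $\mathcal{K}$ is a positive operator with $\int_{\RR^N}k=1$, so it dominates evaluation at the running infimum: if $v(t,\cdot)\ge\iota(t)$ pointwise, then $(\mathcal{K}v)(t,x)\ge\iota(t)$ for every $x$. For (1) the plan is to first normalize away the $-u$ term and the sign of $a$: fix $T'<T$, set $\Lambda:=1+\sup_{[0,T']\times\RR^N}|a|$ (finite since $a(t,\cdot)\in X_p$ and $a$ is continuous in $t$), and pass to $v:=e^{\Lambda t}(u_2-u_1)$, which satisfies $v_t\ge(\mathcal{K}v)(t,x)+c(t,x)\,v(t,x)$ a.e., with $c(t,x)=\Lambda-1+a(t,x+z)\ge0$, together with $v(0,\cdot)\ge0$ and $v\ge-\beta_0e^{\Lambda T'}$. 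Writing $\iota(t)=\inf_xv(t,x)$ and $\beta(t)=\min(\iota(t),0)\le0$, the domination property and $c\ge0$ give $v_t(t,x)\ge(1+c(t,x))\,\iota(t)\ge\Lambda'\,\beta(t)$ for a suitable constant $\Lambda'>0$, for a.e.\ $t$ and all $x$. Integrating in $t$, using $v(0,\cdot)\ge0$, and taking the infimum over $x$ yields $\iota(t)\ge\Lambda'\int_0^t\beta$, hence $\beta(t)\ge\Lambda'\int_0^t\beta$; since $\beta\le0$, Gronwall forces $\beta\equiv0$, so $v\ge0$ on $[0,T']$, and letting $T'\uparrow T$ proves (1). (The only measure-theoretic point is that $t\mapsto\iota(t)$ is Lebesgue measurable, its sub-level sets being projections of measurable subsets of $[0,T')\times\RR^N$.)

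Part (2) reduces to (1) via the mean value theorem: write $u_2f(x+z,u_2)-u_1f(x+z,u_1)=a(t,x)(u_2-u_1)$ with $a(t,x)=\int_0^1\partial_u[uf(x+z,u)]\big|_{u=\theta u_2(t,x)+(1-\theta)u_1(t,x)}\,d\theta$, which is bounded because $f\in C^1$ (by (H1)) and $u_1,u_2$ are bounded; then $u_1,u_2$ are sub-/super-solutions of \eqref{linear-new-eq1} with this $a$, $u_2-u_1$ is automatically bounded below, and (1) applies. For (3) I would use the $x$-independent super-solution $\bar u\equiv\max\{M_0,\|u_0\|_{\tilde X}\}$, where $M_0$ is chosen via (H1) so that $f(x,u)\le0$ for all $x$ and $u\ge M_0$: then $\partial_t\bar u=0$ while the right-hand side of \eqref{main-shifted-eq} equals $f(x+z,\bar u)\bar u\le0$, so $\bar u$ is a super-solution, and $u\equiv0$ is a solution, hence a sub-solution. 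Applying (2) on each $[0,T']\subset[0,T_{\max})$ gives $0\le u(t,\cdot;u_0,z)\le\bar u$, so the solution stays bounded and the blow-up alternative of the local semigroup theory (cf.\ \cite{Hen}, \cite{Paz}) forces $T_{\max}=\infty$.

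For (4), by (2) the function $w:=u(\cdot,\cdot;u_2,z)-u(\cdot,\cdot;u_1,z)\ge0$ solves a linear nonlocal equation $w_t=(\mathcal{K}w)(t,x)-w+d(t,x)w$ with bounded $d$ as in (2). Passing to $W=e^{\gamma t}w$ with $\gamma$ large, the zeroth-order coefficient $d'=\gamma-1+d$ becomes nonnegative, and the variation-of-constants formula in $t$ for each fixed $x$ gives
\[
W(t,x)=e^{\int_0^td'(\tau,x)\,d\tau}W(0,x)+\int_0^te^{\int_s^td'(\tau,x)\,d\tau}(\mathcal{K}W)(s,x)\,ds,
\]
with both terms $\ge0$. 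Since $W(0,\cdot)>0$ on a set $E$ of positive measure, the first term keeps $W(t,\cdot)>0$ on $E$ for all $t\ge0$; then for any $x$ with $|B_{\delta_0}(x)\cap E|>0$ (where $B_{\delta_0}(x)$ is the ball of radius $\delta_0$ about $x$) the second term is $\ge t\int_{B_{\delta_0}(x)\cap E}k(y-x)W(0,y)\,dy>0$ for $t>0$, because $k>0$ on $B_{\delta_0}(0)$. Iterating this ``one $\delta_0$-step'' estimate — routing from an arbitrary base point $x_0$ to $E$ through finitely many $\delta_0$-balls, possible since $\RR^N=\bigcup_nB_{n\delta_0}(x_0)$ and $|E|>0$ — yields $W(t,x)>0$, hence $u(t,x;u_1,z)<u(t,x;u_2,z)$, for every $t>0$ and $x\in\RR^N$.

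I expect the step needing the most care to be this last iteration in (4): it is essentially an irreducibility statement for the nonlocal operator $\mathcal{K}$, and one must track the null-set bookkeeping along the chain of $\delta_0$-balls and verify that the $n$-fold Duhamel expansion contributes strictly positively; this is the part of the argument that is least automatic. By comparison, parts (1)--(3) are routine once the exponential substitution and the infimum-domination inequality are in place.
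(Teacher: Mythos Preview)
Your proof is correct and follows the standard route for nonlocal comparison principles; the paper itself gives no details here, simply citing the authors' earlier work \cite{ShZh0}, and your argument is essentially what one finds there. The exponential reweighting plus infimum/Gronwall argument in (1), the mean-value reduction in (2), the constant barrier in (3), and the Duhamel-iteration/irreducibility argument in (4) are exactly the expected mechanisms.

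Two minor remarks. First, in (2) the coefficient $a(t,x)$ you produce need be neither continuous nor periodic, so it does not literally fit the hypotheses of \eqref{linear-new-eq1}; however your proof of (1) only uses boundedness of $a$ on $[0,T']\times\RR^N$, so the reduction is legitimate --- it is worth saying this explicitly. Second, your measurability remark for $\iota(t)$ is right but slightly understated: projections of Lebesgue-measurable sets are analytic, hence universally (in particular Lebesgue) measurable, which is what makes $\int_0^t\beta$ well-defined. The iteration in (4) is, as you anticipated, the only place requiring real care; your sketch via the $n$-fold Duhamel term and strict positivity of $k$ on $B_{\delta_0}(0)$ (hence of $k^{*n}$ on $B_{n\delta_0}(0)$) is the correct way to make it rigorous.
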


\begin{proof}
If follows from the arguments in \cite[Proposition 2.1]{ShZh0} and \cite[Proposition 2.2]{ShZh0}.
\end{proof}

\subsection{ Sub- and super-solutions}
\label{subsection-sub-super-solu}

 Throughout this subsection, we assume (H1)-(H4) and put $a_0(x)=f(x,0)$.

For given $\xi\in S^{N-1}$, let $\mu^*(\xi)$ be such that
$$
c^*(\xi)=\frac{\lambda_0(\xi,\mu^*(\xi),a_0)}{\mu^*(\xi)}.
$$
Fix $\xi\in S^{N-1}$ and $c>c^*(\xi)$. Let $0<\mu<\mu_1<\min\{2\mu,\mu^*(\xi)\}$ be such that
$
c=\frac{\lambda_0(\xi,\mu,a_0)}{\mu}
$
 and
$
\frac{\lambda_0(\xi,\mu,a_0)}{\mu}>\frac{\lambda_0(\xi,\mu_1,a_0)}{\mu_1}>c^*(\xi).
$
Let $\phi(\cdot)$ and $\phi_1(\cdot)$ be  positive eigenfunctions of $\mathcal{K}_{\xi,\mu}-I+a_0(\cdot)I$
associated to $\lambda_0(\xi,\mu,a_0)$ and $\lambda_0(\xi,\mu_1,a_0)$ with
$\|\phi(\cdot)\|_{X_p}=1$ and $\|\phi_1(\cdot)\|_{X_p}=1$, respectively.
If no confusion occurs, we may write $\lambda_0(\mu,\xi,a_0)$ as $\lambda(\mu)$.

For given $d_1>0$, let
\begin{equation}
\label{v-underbar-eq1}
\underline v^1(t,x;z,T,d_1)= e^{-\mu (x\cdot\xi+cT-ct)}\phi(x+z)-d_1
e^{-\mu_1(x\cdot\xi+cT-ct)}\phi_1(x+z).
\end{equation}
We may write $\underline v^1(t,x;z,T)$ for $\underline v^1(t,x;z,T,d_1)$ for fixed
$d_1>0$ or if no confusion occurs.

\begin{proposition}
\label{sub-solution-prop1}
 For  any $z\in\RR^N$ and $T>0$,
  $\underline v^1(t,x;z,T)$ is a sub-solution of
  \eqref{main-shifted-eq} provided that $d_1$ is sufficiently large.
\end{proposition}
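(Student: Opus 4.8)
The plan is to verify directly that $\underline v^1(t,x;z,T,d_1)$ satisfies the sub-solution differential inequality for $\eqref{main-shifted-eq}$, exploiting the eigenfunction structure to make the two exponential modes cancel against the linear part, and then absorbing the nonlinear error term by choosing $d_1$ large. First I would compute the linear operator $L u := \int_{\RR^N}k(y-x)u(t,y)\,dy - u(t,x) + a_0(x+z)u(t,x)$ applied to each of the two terms of $\underline v^1$. Writing $\eta(t,x) = x\cdot\xi + cT - ct$, the first term $e^{-\mu\eta}\phi(x+z)$ is, by the very definition of $\mathcal K_{\xi,\mu}$ in $\eqref{k-delta-xi-mu-op}$ and the eigenvalue relation $(\mathcal K_{\xi,\mu}-I+a_0(\cdot)I)\phi = \lambda(\mu)\phi$, an exact solution of the linearization $\eqref{linearization-eq0}$ in the moving frame; that is, $\partial_t\big(e^{-\mu\eta}\phi(x+z)\big) = \mu c\, e^{-\mu\eta}\phi(x+z)$ while $L\big(e^{-\mu\eta}\phi(x+z)\big) = \lambda(\mu)e^{-\mu\eta}\phi(x+z)$, and since $c = \lambda(\mu)/\mu$ these agree. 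Similarly the $\phi_1$-term satisfies $\partial_t - L$ equal to $\big(\mu_1 c - \lambda(\mu_1)\big)e^{-\mu_1\eta}\phi_1(x+z)$, and because $c = \lambda(\mu)/\mu > \lambda(\mu_1)/\mu_1$ we get $\mu_1 c - \lambda(\mu_1) > 0$, so this mode contributes a strictly \emph{negative} term to $\partial_t \underline v^1 - L\underline v^1$ (recall it enters with a minus sign and $d_1>0$), with size comparable to $d_1 e^{-\mu_1\eta}$.

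Next I would write out what the sub-solution inequality actually requires: $\partial_t \underline v^1 \le \int k(y-x)\underline v^1(t,y)\,dy - \underline v^1 + f(x+z,\underline v^1)\underline v^1$. Subtracting the linear computation above, this is equivalent to
$$
\big(\mu_1 c - \lambda(\mu_1)\big)\,d_1\, e^{-\mu_1\eta}\phi_1(x+z) \ \ge\ \big(a_0(x+z) - f(x+z,\underline v^1)\big)\,\underline v^1 .
$$
The key point is that where $\underline v^1 \le 0$ the inequality is trivial: by (H4), $f(x+z,\underline v^1) = f(x+z,0) = a_0(x+z)$ there so the right side vanishes, while the left side is nonnegative. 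So the only region that matters is where $\underline v^1 > 0$, which forces $e^{-\mu\eta}\phi \ge d_1 e^{-\mu_1\eta}\phi_1$, hence (using the uniform positivity and boundedness of the periodic functions $\phi,\phi_1$) forces $\eta$ to be bounded below by a constant depending on $d_1$, i.e. $e^{-\mu_1\eta} \le C(d_1)$ with $C(d_1)\to 0$ as $d_1\to\infty$. On that region $0 < \underline v^1 \le e^{-\mu\eta}\phi(x+z) \le e^{-\mu\eta} \le C'(d_1) e^{-\mu_1\eta}$ (again $C'(d_1)\to 0$), and the Lipschitz bound on $f$ from (H1) gives $|a_0(x+z)-f(x+z,\underline v^1)| \le L_f |\underline v^1| \le L_f C'(d_1) e^{-\mu_1\eta}$, so the right side is $O\big(C'(d_1)^2 e^{-2\mu_1\eta}\big)$. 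Meanwhile the left side is bounded below by $c_0\, d_1\, e^{-\mu_1\eta}$ for a constant $c_0 = (\mu_1 c-\lambda(\mu_1))\min\phi_1 > 0$. Since $\mu_1 < 2\mu$ was arranged, and in fact the relevant comparison is of $d_1 e^{-\mu_1\eta}$ against $e^{-2\mu_1\eta}$ on a set where $e^{-\mu_1\eta}$ is already small, I can make the left side dominate by taking $d_1$ large enough; one has to check the quantifiers carefully but the exponents work out.

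The step I expect to be the main obstacle is handling the measurability/absolute-continuity technicalities in the definition of sub-solution together with the fact that $\underline v^1$ can change sign, so the estimate has to be organized as a genuine case split on $\{\underline v^1 > 0\}$ versus its complement rather than a single clean inequality, and one must make sure the "$\eta$ bounded below" threshold and the resulting smallness $C(d_1)\to 0$ are made quantitative enough that a \emph{single} choice of $d_1$ works uniformly in $t\in\RR$, $x\in\RR^N$, $z\in\RR^N$, and in $T>0$. The uniformity in $z$ and $T$ is free because $\phi,\phi_1$ are $\mathbf p$-periodic and $T$ only enters through the shift $\eta \mapsto \eta + cT$, but I would state this explicitly. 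A secondary point to be careful about is that the needed inequality $\lambda(\mu)/\mu > \lambda(\mu_1)/\mu_1 > c^*(\xi)$ with $0<\mu<\mu_1<\min\{2\mu,\mu^*(\xi)\}$ is exactly what was arranged just before the statement, using Proposition \ref{spreading-prop} (strict monotonicity of $\lambda_0(\xi,\cdot,a_0)/(\cdot)$ below $\mu^*(\xi)$) and (H3) to guarantee $\phi,\phi_1$ are genuine positive principal eigenfunctions; I would cite these rather than reprove them.
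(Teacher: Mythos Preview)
Your strategy is exactly the paper's: compute the linear part using the eigenfunction relations, split on the sign of $\underline v^1$, dispose of $\{\underline v^1\le 0\}$ via (H4), and on $\{\underline v^1>0\}$ absorb the quadratic nonlinear error into the strictly positive $(\mu_1 c-\lambda(\mu_1))$ term by taking $d_1$ large. The identification of $\mu_1<2\mu$ as the crucial exponent condition is also right.

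However, your bookkeeping on $\{\underline v^1>0\}$ contains a genuine error. You write $e^{-\mu\eta}\le C'(d_1)\,e^{-\mu_1\eta}$ with $C'(d_1)\to 0$; since $\mu<\mu_1$ and (for $d_1$ large) $\eta\ge 0$ on this set, in fact $e^{-\mu\eta}\ge e^{-\mu_1\eta}$, so this inequality is false, and the ensuing claim that the right-hand side is $O\!\big(C'(d_1)^2 e^{-2\mu_1\eta}\big)$ does not follow. The correct (and simpler) estimate is the paper's: on $\{\underline v^1>0\}$ one has $0<\underline v^1\le e^{-\mu\eta}\phi(x+z)$, hence
\[
\big(a_0(x+z)-f(x+z,\underline v^1)\big)\,\underline v^1\ \le\ L\,(\underline v^1)^2\ \le\ L\,(\max\phi)^2\,e^{-2\mu\eta},
\]
while the left side is $\ge (\mu_1 c-\lambda(\mu_1))(\min\phi_1)\,d_1\,e^{-\mu_1\eta}$. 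The required inequality then reduces to
\[
d_1\ \ge\ \frac{L\,(\max\phi)^2}{(\mu_1 c-\lambda(\mu_1))\min\phi_1}\;e^{(\mu_1-2\mu)\eta},
\]
and since $\mu_1-2\mu<0$ and $\eta\ge 0$ (the latter holding once $d_1\ge \max\phi/\min\phi_1$), the right-hand side is bounded above by a constant independent of $t,x,z,T$. This gives an explicit threshold for $d_1$, exactly as in the paper. So the fix is to compare $e^{-2\mu\eta}$ (not $e^{-2\mu_1\eta}$) against $d_1 e^{-\mu_1\eta}$; the condition $\mu_1<2\mu$ you already invoke is precisely what makes this work.
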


\begin{proof} First of all,
 let $\varphi=e^{-\mu
(x\cdot\xi+cT-ct)}\phi(x+z)$ and
$\varphi_1=d_1e^{-\mu_1(x\cdot\xi+cT-ct)}\phi_1(x+z)$.
Let $M=
\ds\max_{x \in \RR^N}\phi(x)(>0)$. Let $L>0$ be such that
 $-f_{u}(x+z,u)\leq L$ for $0\leq u\leq M$.
 Let $d_0$ be defined by
 $$
d_0= \max\{\frac{\ds\max_{x\in\RR^N}\phi(x)}{\ds\min_{x\in\RR^N}\phi_{1}(x)}, \frac{L\ds\max_{x\in\RR^N}\phi^{2}(x)}{(\mu_{1} c -\lambda(\mu_{1}))\ds\min_{x\in\RR^N}\phi_{1}(x)}\}
$$

Fix $z\in\RR^N$ and $T>0$.
We prove that $\underline v^1(t,x;z,T)$ is a sub-solution of \eqref{main-shifted-eq}
for $d_1\geq d_0$, that is, for any $(t,x)\in\RR\times\RR^N$,
\begin{align}
\label{sub-solu-eq1}
\frac{\p \underline{v}^1}{\p t}- [\int_{\RR^N}k(y-x)\underline{v}^1(t,y;z,T)dy-\underline{v}^1(t,x;z,T)
+f(x+z,\underline{v}^1(t,x;z,T))\underline{v}^1 (t,x;z,T)]\leq 0.
\end{align}

First, for $(t,x)\in\RR\times\RR^N$ with $\underline
v^1(t,x;z,T)\leq 0$, $f(x+z,\underline v^1(t,x;z,T))=f(x+z,0)$. Hence
\begin{align*}
&\frac{\p \underline{v}^1}{\p t}- [\int_{\RR^N}k(y-x)\underline{v}^1(t,y;z,T)dy-\underline{v}^1(t,x;z,T)+f(x+z,\underline{v}^1(t,x;z,T))\underline{v}^1 (t,x;z,T)]\\
&= -(\mu_{1} c -\lambda(\mu_{1})) \varphi_1\leq 0.
\end{align*}
Therefore \eqref{sub-solu-eq1} holds for $(t,x)\in\RR\times\RR^N$ with $\underline
v^1(t,x;z,T)\leq 0$.

Next, consider $(t,x)\in\RR\times\RR^N$ with $\underline
v^1(t,x;z,T)>0$.  By $d_1\geq d_0$, we must have $x\cdot\xi+cT-ct \geq 0$. Then
$\underline{v}^1(t,x;z,T)\leq  e^{-\mu (x\cdot\xi+cT-ct)}\phi(x+z)
\leq  \phi(x+z) \leq M$. Note that for $0<y<M$,
\begin{align*}
-(\mu_{1} c -\lambda(\mu_{1}))-f_{u}(x+z,y)\frac{(\varphi)^{2}}{\varphi_1} &\leq -(\mu_{1} c -\lambda(\mu_{1}))+L \frac{(\varphi)^{2}}{\varphi_1}\\
&= -(\mu_{1} c -\lambda(\mu_{1}))+ \frac{L\phi^{2}(x+z)}{d_{1}\phi_{1}(x+z)} e^{(\mu_{1}-2\mu)(x\cdot\xi+cT-ct)}\\
 &\leq -(\mu_{1} c -\lambda(\mu_{1}))+ \frac{L\ds\max_{y \in \RR^N}\phi^{2}(y)}{d_{1}\ds\max_{y \in \RR^N}\phi_{1}(y)}\\
 &\leq 0.
\end{align*}
Therefore, for $(t,x)\in\RR\times\RR^N$ with $\underline v^1(t,x;z,T)>0$,
\begin{align*}
&\frac{\p \underline{v}^1}{\p t}- [\int_{\RR^N}k(y-x)\underline{v}^1(t,y;z,T)dy-\underline{v}^1(t,x;z,T)+f(x+z,\underline{v}^1)\underline{v}^1 (t,x;z,T)]\\
=&\mu c \varphi-\mu_{1} c \varphi_1-[\int_{\RR^N}k(y-x)\underline{v}^1(t,y;z,T)dy-\underline{v}^1(t,x;z,T)+f(x+z,\underline{v}^1)\underline{v}^1 (t,x;z,T)]\\
=&(\mu c -\lambda(\mu)) \varphi-(\mu_{1} c -\lambda(\mu_{1})) \varphi_1+f(x+z,0)\underline{v}^1(t,x;z,T)-f(x+z,\underline{v}^1)\underline{v}^1(t,x;z,T)\\
=&-(\mu_{1} c -\lambda(\mu_{1})) \varphi_1-f_{u}(x+z,y)(\varphi-\varphi_1)^{2}\quad\quad \big(\text{for some}\,\, y\in (0,M)\big)\\
\leq&-(\mu_{1} c -\lambda(\mu_{1})) \varphi_1-f_{u}(x+z,y)(\varphi)^{2}\\
=&[-(\mu_{1} c -\lambda(\mu_{1}))-f_{u}(x+z,y)\frac{(\varphi)^{2}}{\varphi_1}]\varphi_1\\
\leq&0.
\end{align*}
Hence \eqref{sub-solu-eq1} also  holds for $(t,x)\in\RR\times\RR^N$ with $\underline
v^1(t,x;z,T)> 0$. The proposition then follows.
\end{proof}

\begin{proposition}
\label{sub-solution-prop2} Let $\phi_0$ be the positive principal
eigenfunction of $\mathcal{K}-I+a_0(\cdot)I$ with $\|\phi_0\|_{X_p}=1$. Then for any $z\in\RR^N$ and
$0<b\ll1 $, $\underline v^2(t,x;z,b):=b \phi_0(x+z)$ is a sub-solution
of \eqref{main-shifted-eq}.
\end{proposition}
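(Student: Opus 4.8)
Since $\underline v^2(t,x;z,b)=b\phi_0(x+z)$ is independent of $t$, the left-hand side $\frac{\p \underline v^2}{\p t}$ of the sub-solution inequality for \eqref{main-shifted-eq} is identically $0$, so it suffices to show that
$$
\int_{\RR^N}k(y-x)b\phi_0(y+z)\,dy-b\phi_0(x+z)+f\big(x+z,b\phi_0(x+z)\big)\,b\phi_0(x+z)\ \ge\ 0
$$
for all $x\in\RR^N$. The plan is to rewrite the convolution term using the eigenvalue equation for $\phi_0$. First I would change variables $y\mapsto y+z$ to get $\int_{\RR^N}k(y-x)\phi_0(y+z)\,dy=(\mathcal{K}\phi_0)(x+z)$. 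Since $\phi_0$ is the principal eigenfunction of $\mathcal{K}-I+a_0(\cdot)I$, we have $(\mathcal{K}\phi_0)(x+z)=\phi_0(x+z)-a_0(x+z)\phi_0(x+z)+\lambda_0(a_0)\phi_0(x+z)$ with $a_0(x)=f(x,0)$. Substituting this in and cancelling the two $b\phi_0(x+z)$ terms, the inequality to be verified becomes
$$
b\,\phi_0(x+z)\Big(\lambda_0(a_0)+f\big(x+z,b\phi_0(x+z)\big)-f(x+z,0)\Big)\ \ge\ 0 ,
$$
and since $b>0$ and $\phi_0>0$ on $\RR^N$, this reduces to $\lambda_0(a_0)+f\big(x+z,b\phi_0(x+z)\big)-f(x+z,0)\ge 0$.

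To close this, I would use that $f\in C^1$ and is periodic in $x$, so $f_u$ is bounded on $\RR^N\times[0,1]$; pick $L>0$ with $|f_u(x,u)|\le L$ for all $x\in\RR^N$, $0\le u\le 1$. Since $0<\phi_0\le 1$ (as $\|\phi_0\|_{X_p}=1$), for $0<b\le 1$ we have $0\le b\phi_0(x+z)\le b\le 1$, whence by the mean value theorem
$$
\big|f\big(x+z,b\phi_0(x+z)\big)-f(x+z,0)\big|\ \le\ L\,b\,\phi_0(x+z)\ \le\ Lb .
$$
By (H2) (equivalently (H3) with $\mu=0$), $\lambda_0(a_0)=\lambda_0>0$. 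Therefore, for $0<b\le\min\{1,\lambda_0/L\}$,
$$
\lambda_0(a_0)+f\big(x+z,b\phi_0(x+z)\big)-f(x+z,0)\ \ge\ \lambda_0-Lb\ \ge\ 0 ,
$$
which establishes the sub-solution inequality for all $x\in\RR^N$ (and all $t$); boundedness, measurability, and absolute continuity in $t$ of $\underline v^2$ are immediate since it is constant in $t$.

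There is no genuine analytic obstacle here; the computation is routine. The one point worth emphasizing is structural: it is precisely the strict positivity of the principal eigenvalue $\lambda_0$ — i.e. the linear instability of $u\equiv 0$ built into (H2) — that furnishes the quantitative slack needed to dominate the $O(b)$ nonlinear correction $f(x+z,b\phi_0(x+z))-f(x+z,0)$, which is why $b\phi_0(x+z)$ is a sub-solution only for $b$ small and not, in general, for $b$ of order one.
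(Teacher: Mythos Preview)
Your proof is correct and follows essentially the same approach as the paper: both reduce the sub-solution inequality via the eigenvalue identity for $\phi_0$ to the pointwise condition $\lambda_0\ge f(x+z,0)-f(x+z,b\phi_0(x+z))$, and then use that the right-hand side is $O(b)$ while $\lambda_0>0$. Your argument is in fact more explicit than the paper's, which simply asserts this last step without writing out the Lipschitz bound.
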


\begin{proof}
Fix $z\in\RR^N$.
Observe that
$$
\int_{\RR^N}k(y-x)\phi_0(y+z)dy-\phi_0(x+z)+f(x+z,0)\phi_0(x+z)=\lambda_0\phi_0(x+z)\quad \forall x\in\RR^N.
$$
Observe also that
$\ds\max_{x\in\RR^N}\lambda_0 \phi_0(x+z)>0$
and then
$$\lambda_0 b \phi_0(x+z)\geq (f(x+z,0)-f(x+z,b\phi_0(x+z)))b\phi_0(x+z)\quad \forall 0<b\ll 1.$$
It then follows that
$$
\int_{\RR^N}k(y-x)b\phi_0(y+z)dy-b\phi_0(x+z)+f(x+z,b\phi_0(x+z))b\phi_0(x+z)\geq 0\quad \forall x\in\RR^N,\, 0<b\ll 1.
$$
Hence $\underline v^2(t,x;z,b)$ is a sub-solution of \eqref{main-shifted-eq} for $0<b\ll 1$.
\end{proof}

For given $0<b\ll 1$, there is $M>0$ such that for
$(t,x)\in\RR\times\RR^N$ with $M-2\delta_0\leq x\cdot\xi+cT-ct\leq M$ ($\delta_0$ is the nonlocal dispersal distance
in \eqref{main-eq}),
\begin{equation}
\label{b-cond-eq1}
 \underline v^1(t,x;z,T)\geq b.
\end{equation}

\begin{proposition}
\label{sub-solution-prop3} Let $0<b\ll 1$ and $M>0$ be such that
\eqref{b-cond-eq1} holds and $z\in\RR^N$, $T>0$. Let
$$
{\underline u}(t,x;z,T,d_1,b)=\begin{cases}\max\{ b\phi_0(x+z),\underline
v^1(t,x;z,T,d_1)\}\quad {\rm for}\quad x\cdot\xi+cT-ct< M\cr \underline
v^1(t,x;z,T,d_1)\quad {\rm for}\quad x\cdot \xi+cT-ct\geq M.
\end{cases}
$$
Then ${\underline u}(t,x;z,T,d_1,b)$ is a sub-solution of
\eqref{main-shifted-eq}.
\end{proposition}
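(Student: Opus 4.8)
The plan is to verify directly that the piecewise function $\underline u$ satisfies the sub-solution inequality \eqref{sub-solu-eq1} region by region, using what has already been established in Propositions \ref{sub-solution-prop1} and \ref{sub-solution-prop2}. The key subtlety is that the maximum of two sub-solutions need not be a sub-solution for a nonlocal equation, because the dispersal term $\int_{\RR^N}k(y-x)u(t,y)\,dy$ couples distant points; so the standard ODE/PDE argument ``max of sub-solutions is a sub-solution'' has to be replaced by a careful estimate that exploits the matching condition \eqref{b-cond-eq1}.

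First I would fix $z\in\RR^N$ and $T>0$ and partition $\RR\times\RR^N$ according to the value of $\eta:=x\cdot\xi+cT-ct$ into three zones: (i) $\eta\ge M$, where $\underline u=\underline v^1$; (ii) $M-2\delta_0\le \eta< M$, a transition zone of width $2\delta_0$; and (iii) $\eta< M-2\delta_0$. Within zones where $\underline u$ coincides locally with one of $b\phi_0(x+z)$ or $\underline v^1$ \emph{and} the kernel integral only ``sees'' points where $\underline u$ lies above that same function, the inequality follows from Proposition \ref{sub-solution-prop1} or \ref{sub-solution-prop2} together with the elementary fact that replacing $u(t,y)$ by a pointwise-larger function only increases $\int k(y-x)u(t,y)\,dy$. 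The point of the width-$2\delta_0$ transition zone and of \eqref{b-cond-eq1} is precisely this: because $k$ is supported in $\{\|z\|<\delta_0\}$, for a point $x$ with $\eta(x)\ge M$ the integration variable $y$ ranges only over $|y\cdot\xi-x\cdot\xi|<\delta_0$, hence $\eta(y)>M-\delta_0$, where by \eqref{b-cond-eq1} we have $\underline v^1(t,y;z,T)\ge b\ge b\phi_0(y+z)$, so $\underline u(t,y)=\underline v^1(t,y;z,T)$ there — the nonlocal term for the $\underline v^1$ computation is unaffected by the switch. Symmetrically, for $x$ with $\eta(x)< M-2\delta_0$, all relevant $y$ satisfy $\eta(y)<M-\delta_0$, so $\underline u(t,y)=\max\{b\phi_0(y+z),\underline v^1(t,y;z,T)\}\ge b\phi_0(y+z)$, and since $b\phi_0(x+z)$ is a sub-solution (Proposition \ref{sub-solution-prop2}) and enlarging the integrand preserves the sub-solution inequality, $\underline u$ is a sub-solution at such $x$ wherever it equals $b\phi_0(x+z)$; and wherever it equals $\underline v^1$ at such $x$, one checks that all relevant $y$ have $\underline u(t,y)\ge\underline v^1(t,y;z,T)$, so Proposition \ref{sub-solution-prop1} again applies after enlarging the integrand.

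The remaining case is the transition zone (ii), $M-2\delta_0\le\eta(x)<M$, where by \eqref{b-cond-eq1} we have $\underline v^1(t,x;z,T)\ge b\ge b\phi_0(x+z)$, so $\underline u(t,x)=\underline v^1(t,x;z,T)$ and $\p_t\underline u=\p_t\underline v^1$ there. For such $x$ the relevant $y$ satisfy $\eta(y)> M-3\delta_0$; we do \emph{not} know $\underline u(t,y)=\underline v^1(t,y;z,T)$ for all such $y$, but we do have $\underline u(t,y)\ge\underline v^1(t,y;z,T)$ pointwise by construction. Hence
$$
\int_{\RR^N}k(y-x)\underline u(t,y)\,dy-\underline u(t,x)+f(x+z,\underline u(t,x))\underline u(t,x)
\ \ge\ \int_{\RR^N}k(y-x)\underline v^1(t,y;z,T)\,dy-\underline v^1(t,x;z,T)+f(x+z,\underline v^1)\underline v^1,
$$
and the right-hand side is $\ge \p_t\underline v^1=\p_t\underline u$ by Proposition \ref{sub-solution-prop1}. (The only point needing a word of care is that $\underline u$ is absolutely continuous in $t$ for each fixed $x$: on each zone it is $\p_t$ of a smooth function, and the two pieces agree on the boundary $\eta=M$ after intersecting with the region $\underline v^1\ge b\phi_0$, so $\underline u$ is continuous and piecewise-$C^1$ in $t$, hence absolutely continuous, with $\p_t\underline u$ existing a.e.)

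The main obstacle — and the only place the argument is more than bookkeeping — is the transition zone, where $\underline u$ is the \emph{larger} of the two building blocks but the nonlocal term may pick up contributions from points where the two blocks cross; the resolution is the monotonicity of the convolution in the integrand combined with the pointwise bound $\underline u\ge\underline v^1$ everywhere, which makes the $\underline v^1$-inequality survive the enlargement. I would present the three zones in this order, flagging that zones (i) and (iii) are where the support condition on $k$ together with \eqref{b-cond-eq1} localizes the matching, and zone (ii) is where the ``max'' is reconciled with nonlocality.
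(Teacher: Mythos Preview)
Your proof is correct and follows the same approach as the paper: both verify the sub-solution inequality pointwise by identifying which building block ($\underline v^1$ or $b\phi_0$) is active at $(t,x)$ and using monotonicity of the convolution term to absorb the fact that $\underline u$ dominates that block at nearby points. The paper's version is far terser---it just notes that for fixed $(x,z)$ there are at most two $t$'s with $b\phi_0(x+z)=\underline v^1(t,x;z,T)$ (so $\underline u$ is absolutely continuous in $t$) and then asserts the differential inequality, leaving the role of \eqref{b-cond-eq1} entirely implicit; your zone analysis spells this out.

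One framing correction worth making: your opening claim that ``the maximum of two sub-solutions need not be a sub-solution for a nonlocal equation'' is misleading here. For a nonnegative kernel the pure maximum of two sub-solutions \emph{is} a sub-solution, by exactly the monotonicity-of-the-integrand argument you yourself use. The genuine subtlety is that $\underline u$ is \emph{not} the pure maximum: for $\eta\ge M$ it is set equal to $\underline v^1$, which for large $\eta$ lies strictly below $b\phi_0$. The danger is then that at some $(t,x)$ with $\underline u(t,x)=b\phi_0(x+z)$ the kernel might sample a point $y$ with $\eta(y)\ge M$ where $\underline u(t,y)<b\phi_0(y+z)$. Condition \eqref{b-cond-eq1} rules this out, because it forces $\underline v^1\ge b\phi_0$ on the strip $M-2\delta_0\le\eta\le M$; hence whenever $b\phi_0$ is the active block one must have $\eta(x)<M-2\delta_0$, so all $y$ with $\|y-x\|<\delta_0$ stay in the max region. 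Your argument handles this correctly; only the diagnosis in the first paragraph should be adjusted.
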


\begin{proof}
First, it is not difficult to see that for any $x,z\in\RR^N$, there are at most two
$t$s such that
$b\phi_0(x+z)=\underline v^1(t,x;z,T)$. Hence for any fixed $x,z\in\RR^N$, $\underline u(t,x;z,T)(:=\underline u(t,x;z,t,b,d_1))$ is continuous at every $t$
and is differentiable in $t$ for a.e. $t$. Moreover, for any $t$ at which $\underline u(t,x;z,T)$ is differentiable, there holds
$$
\frac{\p \underline u(t,x;z,T)}{\p t}\leq \int_{\RR^N} k(y-x)\underline u(t,y;z,T)dy -\underline u(t,x;z,T)+
\underline u(t,x;z,T) f(x+z,\underline u(t,x;z,T)).
$$
Therefore, $\underline u(t,x;z,T)$ is a sub-solution of \eqref{main-shifted-eq}.
\end{proof}

For given $d_2\geq 0$, let
$$
\bar v(t,x;z,T,d_2)=e^{-\mu(x\cdot\xi+cT-ct)}\phi(x+z)+d_2 e^{-\mu_1(x\cdot\xi+cT-ct)}\phi_1(x+z)
$$
and
$$
\bar u(t,x;z,T,d_2)=\min\{\bar v(t,x;z,T,d_2),u^{+}(x+z)\}.
$$
We may write $\bar v(t,x;z,T)$ and $\bar u(t,x;z,T)$ for $\bar v(t,x;z,T,d_2)$ and
$\bar u(t,x;z,T,d_2)$, respectively, if no confusion occurs.

\begin{proposition}
\label{sup-solution-prop1}
 For any $d_2\geq 0$,  $z\in\RR^N$,  and $T>0$,
  $\bar u(t,x;z,T)$ is a super-solution of \eqref{main-shifted-eq}.
\end{proposition}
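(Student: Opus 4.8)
The plan is to follow the pattern of Propositions~\ref{sub-solution-prop1} and~\ref{sub-solution-prop3}, but now producing a super-solution. The first and main step is to verify that $\bar v(t,x;z,T)$ itself satisfies the differential inequality in the definition of a super-solution pointwise for all $(t,x)\in\RR\times\RR^N$. Writing $\varphi(t,x)=e^{-\mu(x\cdot\xi+cT-ct)}\phi(x+z)$ and $\varphi_1(t,x)=d_2 e^{-\mu_1(x\cdot\xi+cT-ct)}\phi_1(x+z)$, so that $\bar v=\varphi+\varphi_1>0$, I would first record, via the change of variables $w=y-x$ in the convolution and the eigenfunction identities for $\phi$ and $\phi_1$ (namely $\int_{\RR^N}e^{-\mu w\cdot\xi}k(w)\phi(x+z+w)\,dw=(\lambda(\mu)+1-a_0(x+z))\phi(x+z)$, and similarly with $\mu_1,\phi_1,\lambda(\mu_1)$), the identity
$$\int_{\RR^N}k(y-x)\bar v(t,y;z,T)\,dy-\bar v(t,x;z,T)=\lambda(\mu)\varphi+\lambda(\mu_1)\varphi_1-a_0(x+z)\bar v.$$
Since $\bar v>0$, the mean value theorem gives $f(x+z,\bar v)-f(x+z,0)=f_u(x+z,\theta)\bar v$ for some $\theta\in(0,\bar v)$, and $f_u<0$ on $[0,\infty)$ by (H1), so $f(x+z,\bar v)\bar v-a_0(x+z)\bar v=f_u(x+z,\theta)\bar v^2\le 0$; note that, unlike in the proof of Proposition~\ref{sub-solution-prop1}, this nonlinear term already has the right sign and is simply discarded, so no relation between $\mu_1$ and $2\mu$ is needed. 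Hence the right-hand side of \eqref{main-shifted-eq} evaluated at $\bar v$ is at most $\lambda(\mu)\varphi+\lambda(\mu_1)\varphi_1$, whereas $\partial_t\bar v=\mu c\,\varphi+\mu_1 c\,\varphi_1$; the required inequality thus reduces to $(\mu c-\lambda(\mu))\varphi+(\mu_1 c-\lambda(\mu_1))\varphi_1\ge 0$, which holds because $\mu c=\lambda(\mu)$ by the choice of $\mu$, because $\mu_1 c-\lambda(\mu_1)>0$ (since $\frac{\lambda_0(\xi,\mu,a_0)}{\mu}>\frac{\lambda_0(\xi,\mu_1,a_0)}{\mu_1}$), and because $\varphi_1\ge 0$ as $d_2\ge 0$.

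The second step is to observe that $x\mapsto u^+(x+z)$ is a stationary solution of \eqref{main-shifted-eq}: substituting $y\mapsto y+z$ in the convolution reduces the stationary equation for $u^+(x+z)$ with coefficient $f(x+z,\cdot)$ to the stationary equation for $u^+$ from Proposition~\ref{positive-solu-prop}, so $u^+(\cdot+z)$ satisfies the super-solution inequality with equality. The third step combines the two as in the proof of Proposition~\ref{sub-solution-prop3}: for each fixed $x,z$ the function $t\mapsto\bar u(t,x;z,T)=\min\{\bar v(t,x;z,T),u^+(x+z)\}$ is a minimum of a smooth function of $t$ and a constant, hence Lipschitz on compact $t$-intervals, absolutely continuous, and differentiable at a.e.\ $t$; at a point of differentiability where $\bar u=\bar v$ we have $\partial_t\bar u=\partial_t\bar v$, while where $\bar u=u^+(\cdot+z)$ we have $\partial_t\bar u=0$, and in either case, since $\bar u$ lies pointwise below both $\bar v$ and $u^+(\cdot+z)$ and $k\ge 0$, the nonlocal term for $\bar u$ is dominated by that for whichever of the two realizes the minimum at $x$. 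This yields the super-solution inequality for $\bar u$; together with $0<\bar u\le u^+(\cdot+z)$, which shows $\bar u$ is bounded, it follows that $\bar u(t,x;z,T)$ is a super-solution of \eqref{main-shifted-eq}.

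I do not anticipate a genuine obstacle. The only mild points requiring care are that $\bar v$ is unbounded (it grows as $x\cdot\xi\to-\infty$), so it is not itself a super-solution in the strict sense of the definition, but this is harmless because only the truncation $\bar u$ is used and it is bounded by $u^+$; and one should note that for each fixed $x,z$ the set of $t$ with $\bar v(t,x;z,T)=u^+(x+z)$ is negligible, which follows from the strict monotonicity of $s\mapsto e^{-\mu s}\phi(x+z)+d_2 e^{-\mu_1 s}\phi_1(x+z)$.
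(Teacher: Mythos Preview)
Your argument is correct and the core computation---showing that $\bar v$ satisfies the super-solution differential inequality by using $f(x+z,\bar v)\le f(x+z,0)$ and the eigenfunction identities to reduce to $(\mu_1 c-\lambda(\mu_1))\varphi_1\ge 0$---is exactly what the paper does. The paper compresses your steps~2 and~3 into the single phrase ``it suffices to prove that $\bar v(t,x;z,T)$ is a super-solution,'' tacitly invoking the same minimum-of-super-solutions argument used in Proposition~\ref{sub-solution-prop3}; your version is more careful in explicitly handling the unboundedness of $\bar v$ and spelling out why the truncation $\bar u=\min\{\bar v,u^+(\cdot+z)\}$ inherits the super-solution property.
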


\begin{proof} It suffices to prove that $\bar v(t,x;z,T)$ is a super-solution.\\
Let $\varphi_2=d_2 e^{-\mu_1(x\cdot\xi+cT-ct)}\phi_1(x+z)$.
 By direct calculation, we have
\begin{align*}
&\frac{\p \bar{v}}{\p t}- [\int_{\RR^N}k(y-x)\bar v(t,y;z,T)dy-\bar v(t,x;z,T)+f(x+z,\bar{v})\bar v(t,x;z,T)]\\
\geq &\frac{\p \bar v}{\p t}-[\int_{\RR^N}k(y-x)\bar v(t,y;z,T)dy-\bar v(t,x;z,T)+f(x+z,0)\bar v(t,x;z,T)]\\
=&(\mu_1 c -\lambda(\mu_1)) \varphi_2\\
\geq&0.
\end{align*}
The proposition thus follows.
\end{proof}

In the rest of this section, we fix $d_1^*\gg 1$, $d_2^*\geq 0$, and $0<b^*\ll 1$. Let
\begin{equation}
\label{u-plus-minus-eq1}
u_{0,z,T}^-(x)=\underbar u(0,x;z,T,d_1^*,b^*)
\quad {\rm and}\quad
u_{0,z,T}^+(x)=\bar u(0,x;z,T,d_2^*).
\end{equation}
Then by Proposition \ref{sub-solution-prop3},
\begin{align*}
u(t,x;u_{0,z,T}^-,z)&\geq \underbar u(t,x;z,T)\\
&=\underbar u(0,x;z,T-t)\\
 &=u_{0,z,T-t}^-(x).
\end{align*}
Similarly,
\begin{equation*}
u(t,x;u_{0,z,T}^+,z)\leq
u_{0,z,T-t}^+(x).
\end{equation*}

\begin{proposition}
\label{monotonicity-prop} For any given $z\in\RR^N$, the following hold:
\begin{itemize}
\item[(1)] For any $t_2>t_1>0$,
$$
u(t_2+t,x;u_{0,z,t_2}^-,z)\geq u(t_1+t,x;u_{0,z,t_1}^-,z)\,\,\,\, \forall
t>-t_1,\,\, x\in\RR^N;
$$

\item[(2)]
$$
u(t_2+t,x;u_{0,z,t_2}^+,z)\leq u(t_1+t,x;u_{0,z,t_1}^+,z)\,\,\,\,\forall t>-t_1,\,\, x\in\RR^N.
$$
\end{itemize}
\end{proposition}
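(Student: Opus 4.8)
The plan is to prove Proposition~\ref{monotonicity-prop} by combining the sub-/super-solution estimates displayed just above the statement with the comparison principle (Proposition~\ref{comparison-prop}) and the semigroup (cocycle) property of the solution operator of \eqref{main-shifted-eq}. I will carry out part~(1) in detail; part~(2) is entirely analogous with the inequalities reversed and $\bar u$ in place of $\underbar u$.

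\medskip

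\textbf{Step 1: Reduce to a comparison at a common initial time.} Fix $z\in\RR^N$ and $t_2>t_1>0$. The key is to compare the two entire-in-$t$ families at the time slice $t=-t_1$, i.e. to show
$$
u(t_2-t_1,x;u_{0,z,t_2}^-,z)\ \ge\ u_{0,z,t_1}^-(x)\qquad\forall\,x\in\RR^N.
$$
Once this is established, apply the solution operator $u(\cdot\,;\cdot,z)$ of \eqref{main-shifted-eq} for an additional time $t_1+t$ (with $t>-t_1$), using the cocycle identity $u(s+r,x;u_0,z)=u(s,x;u(r,\cdot;u_0,z),z)$ together with the monotonicity of that operator in its initial datum (Proposition~\ref{comparison-prop}(2)). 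This yields, for all $t>-t_1$ and $x\in\RR^N$,
$$
u(t_2+t,x;u_{0,z,t_2}^-,z)=u(t_1+t,x;u(t_2-t_1,\cdot;u_{0,z,t_2}^-,z),z)\ \ge\ u(t_1+t,x;u_{0,z,t_1}^-,z),
$$
which is exactly the desired inequality.

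\medskip

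\textbf{Step 2: Prove the time-slice inequality.} By the sub-solution estimate displayed above the proposition (a consequence of Proposition~\ref{sub-solution-prop3}), with $t=t_2-t_1$ and $T=t_2$,
$$
u(t_2-t_1,x;u_{0,z,t_2}^-,z)\ \ge\ \underbar u(t_2-t_1,x;z,t_2)\ =\ \underbar u\big(0,x;z,t_2-(t_2-t_1)\big)\ =\ \underbar u(0,x;z,t_1)\ =\ u_{0,z,t_1}^-(x),
$$
where the middle equality is the translation identity $\underbar u(t,x;z,T)=\underbar u(0,x;z,T-t)$ already recorded in the excerpt (it follows immediately from the definitions of $\underline v^1$ and $\underbar u$, since the dependence on $t$ and $T$ is only through $T-t$). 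This is precisely the inequality needed in Step~1. For part~(2) the analogous chain is $u(t_2-t_1,x;u_{0,z,t_2}^+,z)\le u_{0,z,t_1}^+(x)$, using the super-solution estimate and $\bar u(t,x;z,T)=\bar u(0,x;z,T-t)$.

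\medskip

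\textbf{Main obstacle.} There is no serious analytic difficulty here; the content is bookkeeping. The one point that must be checked carefully is that Proposition~\ref{comparison-prop}(2) applies at Step~1, i.e. that $u(t_2-t_1,\cdot;u_{0,z,t_2}^-,z)$ and $u_{0,z,t_1}^-$ are bounded and that the ordering $u_{0,z,t_1}^-\le u(t_2-t_1,\cdot;u_{0,z,t_2}^-,z)$ holds as an $L^\infty$ inequality — both are guaranteed by Step~2 and by the boundedness built into the definition of $\underbar u$ (it is the max of $b^*\phi_0$ and a bounded exponential profile). A second minor point is the tacit use of the cocycle property for \eqref{main-shifted-eq}, which is standard semigroup theory and already invoked implicitly in the display preceding the proposition. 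Thus the proof is short: translation identity for $\underbar u$ (resp. $\bar u$), the sub-/super-solution bound from Proposition~\ref{sub-solution-prop3} (resp.~\ref{sup-solution-prop1}), and one application of the comparison principle after advancing time by $t_1+t$.
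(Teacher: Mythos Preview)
Your proof is correct and follows essentially the same approach as the paper: establish $u(t_2-t_1,\cdot;u_{0,z,t_2}^-,z)\ge u_{0,z,t_1}^-$ via the sub-solution estimate and the translation identity $\underbar u(t,x;z,T)=\underbar u(0,x;z,T-t)$, then apply the cocycle property and comparison principle for the remaining time $t_1+t$. The paper's argument is identical, just written more tersely.
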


\begin{proof}
(1) For given $z\in\RR^N$ and $t_2>t_1>0$, by Proposition
\ref{sub-solution-prop3},
\begin{align*}
u(t_2-t_1,x;u_{0,z,t_2}^-,z)&\geq \underline u(t_2-t_1,x;z,t_2)\\
&=u_{0,z,t_2-(t_2-t_1)}^-(x)\\
&=u_{0,z,t_1}^-(x).
\end{align*}
Hence
\begin{align*}
u(t_2+t,x;u_{0,z,t_2}^-,z)&=u(t_1+t,x;u(t_2-t_1,\cdot;u_{0,z,t_2}^-,z),z)\\
&\geq u(t_1+t,x;u_{0,z,t_1}^-,z).
\end{align*}
(1) is thus proved.

(2) It follows by the similar arguments in (1) and Proposition \ref{sup-solution-prop1}.

 %For given $z\in\RR^N$ and $t_2>t_1>0$, by Proposition
%\ref{sup-solution-prop1},
%\begin{align*}
%u(t_2-t_1,x;u^{+}_{0,z,t_2},z)&\leq \bar u(t_2-t_1,x;z,t_2)\\
%&=u_{0,z,t_2-(t_2-t_1)}^{+}(x)\\
%&=u_{0,z,t_1}^{+}(x).
%\end{align*}
%Hence
%\begin{align*}
%u(t_2+t,x;u_{0,z,t_2},z)&=u(t_1+t,x;u(t_2-t_1,\cdot;u^{+}_{0,z,t_2},z),z)\\
%&\leq u(t_1+t,x;u_{0,z,t_1}^{+},z).
%\end{align*}
%(2) is thus proved.
\end{proof}

\section{Existence of Traveling Wave Solutions and Proof of Theorem \ref{existence-thm}}
\label{existence-section}

In this section, we investigate the existence  of traveling wave solutions of \eqref{main-eq} and prove Theorem \ref{existence-thm}.
Throughout this section, we assume (H1)-(H4).

%Let $X_p$, $X$, and $\tilde X$ be as in \eqref{x-p-space}, \eqref{x-space}, and
%\eqref{general-space}, respectively.
%As in \ref{subsection-sub-super-solu}, we fix $\xi\in S^{N-1}$ and $c>c^*(\xi)$. Let $0<\mu<\mu^*(\xi)$ be such that
%$$
%c=\frac{\lambda_0(\xi,\mu,a_0)}{\mu}.
%$$
%Let $0<\mu<\mu_1<\min\{2\mu,\mu^*(\xi)\}$ with
%$$
%\frac{\lambda_0(\xi,\mu,a_0)}{\mu}>\frac{\lambda_0(\xi,\mu_1,a_0)}{\mu_1}>c^*(\xi).
%$$
%Let $\phi(\cdot)$ and $\phi_1(\cdot)$ be  positive eigenfunctions of $\mathcal{K}_{\xi,\mu}-I+a_0(\cdot)I$
%associated to $\lambda_0(\xi,\mu,a_0)$ and $\lambda_0(\xi,\mu_1,a_0)$ with
%$\|\phi(\cdot)\|_{X_p}=1$ and $\|\phi_1(\cdot)\|_{X_p}=1$, respectively.
%If no confusion occurs, we may write $\lambda_0(\mu,\xi,a_0)$ as $\lambda(\mu)$.

 Let $u^\pm_{0,z,T}$ be as in \eqref{u-plus-minus-eq1}.
Let
\begin{equation}
\label{phi-plus-minus-eq}
\Phi^\pm(x,z)=\lim_{\tau\to\infty} u(\tau,x;u_{0,z,\tau}^\pm,z)
\end{equation}
and
\begin{equation}
\label{u-plus-minus-eq}
U^\pm(t,x;z)=\lim_{\tau\to\infty} u(t+\tau,x;u_{0,z,\tau}^\pm,z).
\end{equation}
By Proposition \ref{monotonicity-prop}, the limits in the above exist for all $t\in\RR$ and $x,z\in\RR^N$.
Moreover, it is easy to see that $\Phi^-(x,z)$ is lower semi-continuous in $(x,z)\in\RR^N\times\RR^N$ and
$\Phi^+(x,z)$ is upper semi-continuous.

We will show that  $u=U^+(t,x;0)$ and $u=U^-(t,x;0)$ are traveling wave solutions of \eqref{main-eq}
in the direction of $\xi$ with  speed $c$ generated by $\Phi^+(\cdot,\cdot)$ and $\Phi^-(\cdot,\cdot)$, respectively,
 and that $\Phi(\cdot,\cdot):=\Phi^+(\cdot,\cdot)$ satisfies Theorem \ref{existence-thm}(1)-(2).

To this end, we first prove some lemmas.

\begin{lemma}
\label{existence-lm1}
For each $z\in\RR^N$,
$u(t,x)=U^\pm(t,x;z)$ are entire solutions of \eqref{main-shifted-eq}.
\end{lemma}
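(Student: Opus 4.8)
\emph{Proof plan.} The strategy is to pass to the limit $\tau\to\infty$ in the mild (integrated) form of \eqref{main-shifted-eq} satisfied by the approximating solutions, and then to recover differentiability in $t$ from the resulting integral identity. Writing \eqref{main-shifted-eq} as $u_t=\mathcal{K}u-u+f(\cdot+z,u)u$ and using the integrating factor $e^{t}$, one has the variation-of-constants identity: any solution $v$ of \eqref{main-shifted-eq} on an interval $(s_0,\infty)$ satisfies, for all $s_0<s<t$ and $x\in\RR^N$,
\begin{equation}
\label{tw-duhamel}
v(t,x)=e^{-(t-s)}v(s,x)+\int_s^t e^{-(t-r)}\Big[\int_{\RR^N}k(y-x)v(r,y)\,dy+f(x+z,v(r,x))v(r,x)\Big]\,dr.
\end{equation}
I would apply this to $v_\tau(t,x):=u(t+\tau,x;u_{0,z,\tau}^\pm,z)$: since \eqref{main-shifted-eq} is autonomous in $t$, $v_\tau$ solves \eqref{main-shifted-eq} on $(-\tau,\infty)$ with $v_\tau(-\tau,\cdot)=u_{0,z,\tau}^\pm$ (global forward existence coming from Proposition \ref{comparison-prop} and the squeeze by the sub-/super-solutions of \S\ref{subsection-sub-super-solu}), so \eqref{tw-duhamel} holds for $v_\tau$ whenever $-\tau<s<t$.

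Fix now $t\in\RR$, $x,z\in\RR^N$ and an arbitrary $s<t$. For every $\tau>-s$ the identity \eqref{tw-duhamel} holds for $v_\tau$, and by Proposition \ref{monotonicity-prop} the functions $v_\tau(r,\cdot)$ converge, monotonically in $\tau$, to $U^\pm(r,\cdot;z)$ for each $r\ge s$, while the family $\{v_\tau\}$ is bounded uniformly in $\tau$ (below by a fixed sub-solution, above by $u^+$, via Propositions \ref{sub-solution-prop3} and \ref{sup-solution-prop1}). Since $k\ge 0$ lies in $L^1(\RR^N)$ and $f$ is continuous, dominated convergence lets me pass to the limit $\tau\to\infty$ in \eqref{tw-duhamel} term by term --- including inside the convolution and inside the $r$-integral --- to obtain
\begin{equation}
\label{tw-duhamel-lim}
U^\pm(t,x;z)=e^{-(t-s)}U^\pm(s,x;z)+\int_s^t e^{-(t-r)}\Big[\int_{\RR^N}k(y-x)U^\pm(r,y;z)\,dy+f(x+z,U^\pm(r,x;z))U^\pm(r,x;z)\Big]\,dr
\end{equation}
for all $s<t$ in $\RR$ and all $x,z\in\RR^N$.

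From \eqref{tw-duhamel-lim} I would bootstrap regularity. Its right-hand side is continuous in $t$ because $U^\pm$ and the bracketed integrand are bounded, so $t\mapsto U^\pm(t,x;z)$ is continuous for every $x$; feeding this back, $r\mapsto\int_{\RR^N}k(y-x)U^\pm(r,y;z)\,dy$ and $r\mapsto f(x+z,U^\pm(r,x;z))U^\pm(r,x;z)$ are continuous, hence the bracketed integrand is continuous in $r$. Differentiating \eqref{tw-duhamel-lim} in $t$ by the fundamental theorem of calculus then gives
\[
\frac{\partial}{\partial t}U^\pm(t,x;z)=\int_{\RR^N}k(y-x)U^\pm(t,y;z)\,dy-U^\pm(t,x;z)+f(x+z,U^\pm(t,x;z))U^\pm(t,x;z)
\]
for every $t\in\RR$ and $x,z\in\RR^N$, which is precisely \eqref{main-shifted-eq}; since $U^\pm$ is a monotone pointwise limit of measurable functions it is measurable, so $u=U^\pm(t,x;z)$ is an entire solution of \eqref{main-shifted-eq}.

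The step I expect to be the main obstacle is the interchange of limit and integration together with this bootstrap: one has to be sure the family $\{v_\tau\}$ is bounded uniformly in $\tau$ on the relevant time--space set so that dominated convergence applies both to the convolution term and to the $r$-integral, and one has to route the argument through the integral equation \eqref{tw-duhamel-lim} rather than through the differential equation directly, because monotone pointwise convergence of $v_\tau$ gives no control over $t$-derivatives.
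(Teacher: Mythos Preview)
Your argument is correct and follows essentially the same route as the paper: pass to the limit in an integrated form of \eqref{main-shifted-eq} via dominated convergence (using the uniform bounds from the sub-/super-solutions), and then recover $C^1$-in-$t$ from the integral identity. The only cosmetic difference is that the paper writes the plain integrated equation $u(t+\tau,x)=u(\tau,x)+\int_0^t[\cdots]\,ds$ rather than your Duhamel form with the factor $e^{-(t-s)}$; your version is equivalent and in fact makes the continuity/differentiability bootstrap a little more explicit.
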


\begin{proof}
We prove the case that $u(t,x)=U^+(t,x;z)$. The other case can be proved similarly.

Fix $z\in\RR^N$.
Observe that for any $x\in\RR^N$,
\begin{align*}
&u(t+\tau,x;u_{0,z,\tau}^+,z)=u(\tau,x;u_{0,z,\tau}^+,z)+
\int_0^t \int_{\RR^N} k(y-x) u(s+\tau,y;u_{0,z,\tau}^+,z)dy ds\\
&+\int_0^ t\big[-u(s+\tau,x;u_{0,z,\tau}^+,z)+
u(s+\tau,x;u_{0,z,\tau}^+,z) f(x+z,u(s+\tau,x;u_{0,z,\tau}^+,z))\big ] ds
\end{align*}
Letting $\tau\to\infty$, we have
$$
u(t,x)=u(0,x)+\int_0^t\big[\int_{\RR^N} k(y-x)u(s,y)dy-u(s,x)+u(s,x)f(x+z,u(s,x))\big] ds.
$$
This implies that $u(t,x)$ is differentiable in $t$ and satisfies \eqref{main-shifted-eq} for all $t\in\RR$.
\end{proof}

Observe that
$$
U^\pm(t,x;z)=u(t,x; \Phi^\pm(\cdot,z),z)\,\,\forall t\in\RR,\,\, x,z\in\RR^N.
$$

\begin{lemma}
\label{existence-lm2}
 $u(t,x;\Phi^\pm(\cdot,z),z)=\Phi^\pm(x -ct\xi,z+ct\xi)$,
$\ds\lim_{x\cdot\xi\to -\infty}(\Phi^\pm(x,z)-u^+(x+z))=0$ and
$\ds\lim_{x\cdot\xi\to\infty}\frac{\Phi^\pm(x,z)}{ e^{-\mu x\cdot\xi}\phi(x+z)}=1$ uniformly in $z\in\RR^N$.
\end{lemma}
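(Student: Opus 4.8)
The plan is to establish the three assertions in turn, exploiting the translation structure built into the sub- and super-solutions $u^\pm_{0,z,T}$ and the monotone limits defining $\Phi^\pm$ and $U^\pm$. First I would prove the traveling-wave identity $u(t,x;\Phi^\pm(\cdot,z),z)=\Phi^\pm(x-ct\xi,z+ct\xi)$. The key observation is that the family $\{u^\pm_{0,z,T}\}$ is covariant under the space-time shift in the direction $\xi$: because $\underline v^1$ and $\bar v$ depend on $(t,x,T)$ only through the combination $x\cdot\xi+cT-ct$, one has the exact relation $u^\pm_{0,z+cs\xi,\,T-s}(x-cs\xi)=\underline u(\text{resp. }\bar u)(s,x;z,T)$-type shift identities, so that by uniqueness of solutions of \eqref{main-shifted-eq} and the semigroup property, $u(t+\tau-s,x-cs\xi;u^\pm_{0,z,\tau},z)$ at the shifted data equals $u(t+\tau,x;\cdots)$ after relabeling. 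Passing $\tau\to\infty$ in \eqref{u-plus-minus-eq} and using Lemma \ref{existence-lm1} (which gives that $U^\pm(\cdot,\cdot;z)$ is the entire solution through $\Phi^\pm(\cdot,z)$), the shift relation becomes exactly $U^\pm(t,x;z)=\Phi^\pm(x-ct\xi,z+ct\xi)$, i.e. the claimed identity. I expect this bookkeeping with the shifts to be the main obstacle — not conceptually deep, but one must be careful that $z$ also gets shifted (to $z+ct\xi$), which is why the definition of $\Phi^\pm$ carries the $z$-dependence, and that the limits in \eqref{phi-plus-minus-eq}–\eqref{u-plus-minus-eq} are taken along the correctly synchronized sequence of initial times.

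Next, for the behavior as $x\cdot\xi\to-\infty$: I would squeeze $\Phi^-\le\Phi^+$ between the sub-solution $\underline u(0,\cdot;z,\tau)$ and the super-solution $\bar u(0,\cdot;z,\tau)$ in the limit, and compare with the spreading/convergence statement of Proposition \ref{positive-solu-prop}. Concretely, for fixed large $\tau$ the sub-solution $u^-_{0,z,\tau}$ is bounded below by $b^*\phi_0(x+z)>0$ on the half-space $x\cdot\xi+c\tau\le M$, hence (after shifting time) $u(t+\tau,x;u^-_{0,z,\tau},z)$ is, for $t$ in a bounded window, bounded below by a fixed positive constant on an ever-larger half-space as $\tau\to\infty$; applying Proposition \ref{positive-solu-prop} to the shifted equation \eqref{main-shifted-eq} gives $\sup_{x\cdot\xi\le r}|u(T,x;\cdot,z)-u^+(x+z)|<\epsilon$. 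Taking $\tau\to\infty$ then shows $\Phi^-(x,z)\to u^+(x+z)$ as $x\cdot\xi\to-\infty$, uniformly in $z$. Since $\Phi^+\le u^+$ (because $\bar u\le u^+$ by construction and this is preserved under the flow and the limit) and $\Phi^-\le\Phi^+$, the same limit holds for $\Phi^+$, and hence for both.

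Finally, for the tail as $x\cdot\xi\to+\infty$: here I would use the explicit exponential sub- and super-solutions. The super-solution bound gives $\Phi^+(x,z)\le \bar v(0,x;z,\tau)=e^{-\mu x\cdot\xi}\phi(x+z)+d_2^* e^{-\mu_1 x\cdot\xi}\phi_1(x+z)$ for every $\tau$ (with the $cT$ absorbed — one checks the bound survives the limit, noting $\bar v$ evaluated at $t=\tau$ with $x$ fixed is independent of $\tau$ up to the factor $e^{-\mu(x\cdot\xi+c\tau-c\tau)}$, i.e. genuinely independent of $\tau$), while the sub-solution bound gives $\Phi^-(x,z)\ge \underline v^1(0,x;z,\tau)=e^{-\mu x\cdot\xi}\phi(x+z)-d_1^* e^{-\mu_1 x\cdot\xi}\phi_1(x+z)$ on the region where this is what defines $\underline u$, namely $x\cdot\xi\ge M$. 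Dividing by $e^{-\mu x\cdot\xi}\phi(x+z)$ and using $\mu_1>\mu$ together with $\phi,\phi_1$ bounded above and below by positive constants (they are positive periodic continuous functions), both bounds tend to $1$ as $x\cdot\xi\to+\infty$, uniformly in $z$. Since $\Phi^-\le\Phi^+$, the squeeze yields $\lim_{x\cdot\xi\to\infty}\Phi^\pm(x,z)/(e^{-\mu x\cdot\xi}\phi(x+z))=1$ uniformly in $z$. The only subtlety is confirming that the sub/super-solution inequalities at time $0$ pass to the limit $\tau\to\infty$ in \eqref{phi-plus-minus-eq}; this follows from Proposition \ref{monotonicity-prop} (monotonicity in $\tau$) together with Proposition \ref{comparison-prop}, exactly as in the displayed inequalities $u(t,x;u^-_{0,z,T},z)\ge u^-_{0,z,T-t}(x)$ and its counterpart preceding Proposition \ref{monotonicity-prop}.
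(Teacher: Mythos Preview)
Your proposal is correct and follows essentially the same route as the paper: the shift identity via the covariance $u^\pm_{0,z,\tau}(\cdot+ct\xi)=u^\pm_{0,z+ct\xi,\,t+\tau}$, the exponential tail via the $\tau$-independent squeeze between $\underline v^1$ and $\bar v$, and the left limit via Proposition~\ref{positive-solu-prop}. The only cosmetic difference is that the paper, having already established the shift identity, applies Proposition~\ref{positive-solu-prop} directly to the entire solution $U^\pm(\cdot,\cdot;z)$ (using $U^\pm\ge b\phi_0(\cdot+z)$ on a half-space and then translating back), which is slightly cleaner than your plan of working with the pre-limit data $u^-_{0,z,\tau}$ and passing $\tau\to\infty$.
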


\begin{proof} We prove the lemma for $\Phi^+(\cdot,\cdot)$. It can be proved similarly for
$\Phi^-(\cdot,\cdot)$.

First of all, we have
\begin{align*}
u(t,x;\Phi^+(\cdot,z),z)&=\lim_{\tau\to\infty}u(t,x;u(\tau,x;u^+_{0,z,\tau},z),z)\\
&=\lim_{\tau\to\infty}u(t+\tau,x;u^+_{0,z,\tau},z)\\
&=\lim_{\tau\to\infty}u(t+\tau,x-ct\xi;u^+_{0,z+ct\xi,t+\tau},z+ct\xi)\\
&=\Phi^+(x  -ct\xi,z+ct\xi).
\end{align*}

Note that
\begin{align*}
\underbar {u}(t+T,x;z,T)&= e^{-\mu (x\cdot\xi -ct)}\phi(x+z)-d_1 e^{-\mu_1(x\cdot\xi -ct)}\phi_1(x+z) \\
&\leq u(t,x;\Phi^+(\cdot,z),z)\\
& \leq \bar{u}(t+T,x;z,T)\\
&=e^{-\mu(x \cdot \xi -ct)}\phi(x+z)+d_2  e^{-\mu_1(x\cdot\xi -ct)}\phi_1(x+z)
\end{align*}
 for $t\in\RR$ and $x,z\in\RR^N$. Thus $\ds\lim_{x \cdot \xi -ct \to \infty} \frac{\Phi^+(x  -ct\xi,z+ct\xi)}
 {e^{-\mu (x\cdot\xi -ct)}\phi(x+z)}=1$, which is equivalent to $\ds\lim_{x\cdot\xi\to\infty}
\frac{\Phi^+(x,z)}
{ e^{-\mu x\cdot\xi}\phi(x+z)}=1$, uniformly in $z\in\RR^N$.

We now prove that $\ds\lim_{x\cdot\xi\to -\infty}\big(\Phi^+(x,z)-u^+(x+z)\big)=0$ uniformly in $z\in\RR^N$. Observe that
there is $M>0$ such that
$$
U^+(t,x,z)\geq U^-(t,x,z)\geq b \phi_0(x+z) \quad {\rm for}\quad x\cdot\xi-ct\leq M,\,\, z\in\RR^N.
$$
By Proposition \ref{positive-solu-prop}, for any $\epsilon>0$, there are $T>0$ and $\eta^*\in\RR$ such that
$$
|U^+(T,x,z)-u^+(x+z)|<\epsilon\quad {\rm for}\quad x\cdot\xi\leq \eta^*,\,\, z\in\RR^N.
$$
This implies that
$$
|\Phi^+(x,z)-u^+(x+z)|\leq \epsilon\quad {\rm for}\quad x\cdot\xi\leq \eta^*+cT,\,\, z\in\RR^N
$$
and hence $\ds\lim_{x\cdot\xi\to -\infty}\big(\Phi^+(x,z)-u^+(x+z)\big)=0$ uniformly in $z\in\RR^N$.
\end{proof}

\begin{corollary}
\label{existence-cor1}
Both $\Phi^+(\cdot,\cdot)$ and $\Phi^-(\cdot,\cdot)$ generate traveling wave solutions of \eqref{main-eq}
in the direction of $\xi$
with speed $c$.
\end{corollary}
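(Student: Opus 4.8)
The plan is to verify that $\Phi=\Phi^{+}$ and $\Phi=\Phi^{-}$ each satisfy the four defining relations \eqref{def-eq1}--\eqref{def-eq4} of Definition \ref{wave-def}(2), together with the standing requirement that they be bounded, measurable and $\RR^{+}$-valued. Lemma \ref{existence-lm1} shows $U^{\pm}(\cdot,\cdot;z)=u(\cdot,\cdot;\Phi^{\pm}(\cdot,z),z)$ are entire solutions of \eqref{main-shifted-eq}, and Lemma \ref{existence-lm2} supplies \eqref{def-eq1} and the first half of \eqref{def-eq2}; the second half, $\ds\lim_{x\cdot\xi\to\infty}\Phi^{\pm}(x,z)=0$ uniformly in $z$, follows from the asymptotics $\ds\lim_{x\cdot\xi\to\infty}\Phi^{\pm}(x,z)/\big(e^{-\mu x\cdot\xi}\phi(x+z)\big)=1$ of that lemma together with $\phi\in X_p$ and $e^{-\mu x\cdot\xi}\to0$. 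Measurability is immediate from the semicontinuity of $\Phi^{\pm}$ noted after \eqref{u-plus-minus-eq}. For nonnegativity and boundedness I would argue by comparison: $u_{0,z,\tau}^{+}=\bar u(0,\cdot;z,\tau)$ is clearly in $\tilde X^{+}$ and lies below the stationary solution $u^{+}(\cdot+z)$ of \eqref{main-shifted-eq}, so $0\le u(\tau,\cdot;u_{0,z,\tau}^{+},z)\le u^{+}(\cdot+z)$; and $u_{0,z,\tau}^{-}=\underline u(0,\cdot;z,\tau)$ is also in $\tilde X^{+}$ (on the region where $x\cdot\xi+c\tau\ge M$ one has $\underline v^{1}>0$ by the sign analysis behind \eqref{b-cond-eq1}, while elsewhere $\underline u\ge b^{*}\phi_0>0$) and is uniformly bounded, hence $0\le u(\tau,\cdot;u_{0,z,\tau}^{-},z)$ and, using a large constant super-solution afforded by (H1), $u(\tau,\cdot;u_{0,z,\tau}^{-},z)$ is bounded. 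Letting $\tau\to\infty$ preserves all of these, so $\Phi^{\pm}$ are bounded, measurable and $\RR^{+}$-valued.

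For the periodicity \eqref{def-eq4}, the key observation is that $\mathcal K$ and the kernel $k$ are translation invariant, so the only $z$-dependence in \eqref{main-shifted-eq} enters through $f(\cdot+z,\cdot)$, which is $\mathbf p$-periodic in $z$; since $\phi,\phi_1,\phi_0,u^{+}\in X_p$ and the exponential prefactors in $\underline v^{1}$ and $\bar v$ do not involve $z$, we get $u_{0,z+p_i\mathbf e_i,\tau}^{\pm}=u_{0,z,\tau}^{\pm}$ and hence $u(\tau,x;u_{0,z+p_i\mathbf e_i,\tau}^{\pm},z+p_i\mathbf e_i)=u(\tau,x;u_{0,z,\tau}^{\pm},z)$ for all $\tau$; letting $\tau\to\infty$ gives $\Phi^{\pm}(x,z+p_i\mathbf e_i)=\Phi^{\pm}(x,z)$.

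The relation \eqref{def-eq3} --- invariance of $\Phi^{\pm}$ under shifts orthogonal to $\xi$ --- is where I expect the only genuine work, though it is still essentially bookkeeping. I would first record the elementary identity $u(t,y+h;u_0,z_0)=u(t,y;u_0(\cdot+h),z_0+h)$ for every $h\in\RR^{N}$, obtained by substituting $y''=y'-h$ in the convolution term of \eqref{main-shifted-eq} (the same device used implicitly in the proof of Lemma \ref{existence-lm2}). For $h$ with $h\cdot\xi=0$ one has $(x+h)\cdot\xi=x\cdot\xi$ and $(x+h)+(z-h)=x+z$, so the explicit formulas for $\underline v^{1}$, $\bar v$, $b^{*}\phi_0$ and $u^{+}$ give $u_{0,z-h,\tau}^{\pm}(\cdot+h)=u_{0,z,\tau}^{\pm}(\cdot)$; combining this with the translation identity at $y=x$, $z_0=z-h$ yields $u(\tau,x+h;u_{0,z-h,\tau}^{\pm},z-h)=u(\tau,x;u_{0,z,\tau}^{\pm},z)$, and passing to the limit, $\Phi^{\pm}(x+h,z-h)=\Phi^{\pm}(x,z)$ for every $h\perp\xi$; rewriting this with $x'=x-h$ and with $z$ renamed is precisely \eqref{def-eq3}. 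This completes the verification that $\Phi^{+}$ and $\Phi^{-}$ generate traveling wave solutions of \eqref{main-eq} in the direction of $\xi$ with speed $c$. The main points requiring care are that the sign analysis ensuring $u_{0,z,\tau}^{-}\in\tilde X^{+}$ is uniform in $z$ and $\tau$, and that the shear translation is applied simultaneously and consistently to the initial datum and to the shift parameter $z$.
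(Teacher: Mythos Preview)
Your proposal is correct and follows essentially the same route as the paper. The paper's proof invokes Lemmas \ref{existence-lm1} and \ref{existence-lm2} for \eqref{def-eq1}--\eqref{def-eq2}, then verifies \eqref{def-eq3} via the translation identity $u(\tau,x';u^{\pm}_{0,z-x',\tau},z-x')=u(\tau,x;u^{\pm}_{0,z-x',\tau}(\cdot+x'-x),z-x)=u(\tau,x;u^{\pm}_{0,z-x,\tau},z-x)$ for $x\cdot\xi=x'\cdot\xi$, and \eqref{def-eq4} from $u^{\pm}_{0,z+p_i\mathbf e_i,\tau}=u^{\pm}_{0,z,\tau}$ --- precisely your argument with $h=x'-x$. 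Your treatment is more explicit about the background requirements (boundedness, measurability, nonnegativity of $\Phi^{\pm}$), which the paper leaves implicit; one small caution is that your appeal to ``the sign analysis behind \eqref{b-cond-eq1}'' for $\underline v^{1}>0$ on $\{x\cdot\xi+c\tau\ge M\}$ is not literally what \eqref{b-cond-eq1} asserts, though the positivity does follow for $M$ large from $\mu<\mu_1$ and the periodicity of $\phi,\phi_1$.
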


\begin{proof}
First of all, by Lemmas \ref{existence-lm1} and \ref{existence-lm2}, both  $\Phi^+(\cdot,\cdot)$ and $\Phi^-(\cdot,\cdot)$
satisfy \eqref{def-eq1} and \eqref{def-eq2}.

Next, for any $x,x^{'}\in\RR^N$ with $x\cdot\xi=x^{'}\cdot\xi$, $z\in\RR^N$, and $\tau\in\RR$,  we have
\begin{align*}
u(\tau,x^{'};u^\pm_{0,z-x^{'},\tau}(\cdot),z-x^{'})&=u(\tau,x;u^\pm_{0,z-x^{'},\tau}(\cdot+x^{'}-x),z-x^{'}+(x^{'}-x))\\
&=u(\tau,x;u^\pm_{0,z-x,\tau}(\cdot),z-x).
\end{align*}
This implies that $\Phi^\pm(\cdot,\cdot)$ satisfies \eqref{def-eq3}.

Observe now that $u^\pm_{0,z+p_i{\bf e_i},\tau}=u^\pm_{0,z,\tau}$ for any $\tau\in\RR$ and $z\in\RR^N$. It then follows
that $\Phi^\pm(x,z+p_i{\bf e_i})=\Phi^\pm(x,z)$ and hence $\Phi^\pm(\cdot,\cdot)$ satisfies \eqref{def-eq4}.

Therefore, both $\Phi^+(\cdot,\cdot)$ and $\Phi^-(\cdot,\cdot)$ generate traveling wave solutions of \eqref{main-eq}
in the direction of $\xi$
with speed $c$.
\end{proof}

\begin{lemma}
\label{existence-lm3}
$\ds\lim_{x\cdot\xi-ct\to -\infty} U^\pm_t(t,x;z)=0$
uniformly in $z\in\RR^N$.
\end{lemma}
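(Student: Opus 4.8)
The plan is to start from the fact, established in Lemma~\ref{existence-lm1}, that each $u=U^\pm(t,x;z)$ is an entire solution of \eqref{main-shifted-eq}, so that for all $t\in\RR$ and $x,z\in\RR^N$
\begin{align*}
U^\pm_t(t,x;z)=&\int_{\RR^N}k(y-x)U^\pm(t,y;z)\,dy-U^\pm(t,x;z)\\
&+U^\pm(t,x;z)f\big(x+z,U^\pm(t,x;z)\big).
\end{align*}
Since $u=u^+(\cdot)$ is a stationary solution of \eqref{main-eq} (Proposition~\ref{positive-solu-prop}) and $k$ is a convolution kernel, the change of variables $y\mapsto y+z$ also yields, for all $x,z\in\RR^N$,
$$
\int_{\RR^N}k(y-x)u^+(y+z)\,dy-u^+(x+z)+u^+(x+z)f\big(x+z,u^+(x+z)\big)=0 .
$$
Subtracting this identity from the previous one and writing $w^\pm(t,x;z):=U^\pm(t,x;z)-u^+(x+z)$, I obtain
\begin{align*}
U^\pm_t(t,x;z)=&\int_{\RR^N}k(y-x)\,w^\pm(t,y;z)\,dy-w^\pm(t,x;z)\\
&+\Big(U^\pm(t,x;z)f\big(x+z,U^\pm(t,x;z)\big)-u^+(x+z)f\big(x+z,u^+(x+z)\big)\Big).
\end{align*}

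It then suffices to show that each of the three terms on the right tends to $0$ uniformly in $z\in\RR^N$ as $x\cdot\xi-ct\to-\infty$. For the term $w^\pm(t,x;z)$ this is a restatement of Lemma~\ref{existence-lm2}: since $U^\pm(t,x;z)=\Phi^\pm(x-ct\xi,z+ct\xi)$ and $u^+(x+z)=u^+\big((x-ct\xi)+(z+ct\xi)\big)$, the substitution $\tilde x=x-ct\xi$, $\tilde z=z+ct\xi$ turns $w^\pm(t,x;z)$ into $\Phi^\pm(\tilde x,\tilde z)-u^+(\tilde x+\tilde z)$, which by Lemma~\ref{existence-lm2} tends to $0$ as $\tilde x\cdot\xi=x\cdot\xi-ct\to-\infty$, uniformly in $\tilde z$ and hence in $z$. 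For the convolution term I would use that $k$ is supported in $\{\|y-x\|<\delta_0\}$, so that for $y$ in that ball $y\cdot\xi-ct=(y-x)\cdot\xi+(x\cdot\xi-ct)\to-\infty$ uniformly in such $y$; hence $w^\pm(t,y;z)\to0$ uniformly over $\|y-x\|<\delta_0$ and $z\in\RR^N$, and since $k$ integrates to $1$ the convolution term is bounded in absolute value by $\sup_{\|y-x\|<\delta_0}|w^\pm(t,y;z)|$, which tends to $0$. For the last term I would use that by (H1) together with the periodicity of $f$ in its first variable the map $u\mapsto uf(\cdot,u)$ is Lipschitz on every fixed bounded $u$-interval with a constant independent of the first variable (and (H4) takes care of $u\le0$), while $U^\pm(t,x;z)$ and $u^+(x+z)$ both remain in a fixed bounded interval and differ by $w^\pm(t,x;z)\to0$; hence the last term is $O\big(|w^\pm(t,x;z)|\big)\to0$ uniformly in $z$. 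Combining the three estimates gives $U^\pm_t(t,x;z)\to0$ uniformly in $z\in\RR^N$, which is the assertion of the lemma.

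There is no substantive obstacle in this argument; the only points requiring care are the uniformities — specifically, translating the uniform-in-$z$ convergence of $\Phi^\pm$ supplied by Lemma~\ref{existence-lm2} into uniform-in-$z$ convergence of $w^\pm$, and invoking the finite dispersal range $\delta_0$ to control the convolution term uniformly over the $\delta_0$-ball about $x$.
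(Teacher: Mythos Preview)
Your proposal is correct and follows essentially the same route as the paper: write $U^\pm_t$ via the equation, subtract the stationary identity satisfied by $u^+(\cdot+z)$, and use Lemma~\ref{existence-lm2} together with the finite support of $k$ to send each resulting term to $0$ uniformly in $z$. The paper is terser about the nonlinear term and the uniformities, but your added detail (the Lipschitz bound on $u\mapsto uf(\cdot,u)$ and the $\tilde x,\tilde z$ substitution) only makes the same argument more explicit.
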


\begin{proof}
Note that
\begin{align*}
U^\pm_t(t,x;z)&=\int_{\RR^N} k(y-x)U^\pm(t,y;z)dy-U^\pm(t,x;z)+U^\pm(t,x;z)f(x+z,U^\pm(t,x;z))\\
&=\int_{\|y\|\leq \delta_0} k(y)U^\pm(t,x+y;z)dy-U^\pm(t,x;z)+U^\pm(t,x;z)f(x+z,U^\pm(t,x;z)).
\end{align*}
Note also that
$$
\lim_{x\cdot\xi-ct\to -\infty} \big(U^\pm(t,x;z)-u^+(x+z)\big)=0
$$
uniformly in $z\in\RR^N$.
It then follows that
\begin{align*}
\lim_{x\cdot\xi-ct\to -\infty} U_t^\pm(t,x;z)&=\lim_{x\cdot\xi -ct\to -\infty} \Big[U^\pm_t(t,x;z)-\int_{\RR^N} k(y)u^+(y+x+z)dy+u^+(x+z)\\
&\quad -u^+(x+z)f(x+z,u^+(x+z))\Big]\\
&=\lim_{x\cdot\xi-ct\to -\infty}\Big[\int_{\RR^N} k(y)\big(U^\pm(t,x+y;z)-u^+(x+y+z)\big)dy\\
&\quad -\big(U^\pm(t,x;z)-u^+(x+z)\big)\\
&\quad + \big(U^\pm(t,x;z) f(x+z,U^\pm(t,x;z))-u^\pm(x+z) f(x+z,u^+(x+z))\big)\Big]\\
&=0\quad \text{uniformly in}\quad z\in\RR^N.
\end{align*}
\end{proof}

\begin{lemma}
\label{existence-lm4}
$\lim_{x\cdot\xi-ct\to\infty}\frac{U_t^\pm(t,x;z)}{e^{-\mu(x\cdot\xi-ct)}\phi(x+z)}=\mu c$ uniformly in $z\in\RR^N$.
\end{lemma}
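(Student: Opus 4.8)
The plan is to obtain the asymptotic behavior of $U_t^\pm$ as $x\cdot\xi-ct\to\infty$ by combining the sub- and super-solution sandwich for $U^\pm$ itself (established in Lemma \ref{existence-lm2}) with an analogous two-sided control for the time derivative. First I would record the integral identity
\begin{equation*}
U_t^\pm(t,x;z)=\int_{\|y\|\le\delta_0}k(y)U^\pm(t,x+y;z)dy-U^\pm(t,x;z)+U^\pm(t,x;z)f(x+z,U^\pm(t,x;z))
\end{equation*}
already used in the previous lemma. Since $U^\pm(t,x;z)=\Phi^\pm(x-ct\xi,z+ct\xi)$ and, by Lemma \ref{existence-lm2}, $\Phi^\pm(x,z)=e^{-\mu x\cdot\xi}\phi(x+z)(1+o(1))$ as $x\cdot\xi\to\infty$ uniformly in $z$, the nonlinear term $U^\pm f(x+z,U^\pm)$ differs from $U^\pm f(x+z,0)=a_0(x+z)U^\pm$ by a quantity of order $(U^\pm)^2=O(e^{-2\mu(x\cdot\xi-ct)})$, hence negligible compared with $e^{-\mu(x\cdot\xi-ct)}\phi(x+z)$. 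Therefore it suffices to show that the linear expression
\begin{equation*}
L^\pm(t,x;z):=\int_{\|y\|\le\delta_0}k(y)U^\pm(t,x+y;z)dy-U^\pm(t,x;z)+a_0(x+z)U^\pm(t,x;z)
\end{equation*}
satisfies $L^\pm(t,x;z)/\big(e^{-\mu(x\cdot\xi-ct)}\phi(x+z)\big)\to \mu c$.

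The core computation is that the profile $e^{-\mu(x\cdot\xi-ct)}\phi(x+z)$ is an exact solution of the linearization \eqref{linearization-eq0} (in the shifted variables): substituting it, the left side gives $\mu c\, e^{-\mu(x\cdot\xi-ct)}\phi(x+z)$ while the right side gives $\lambda_0(\xi,\mu,a_0)e^{-\mu(x\cdot\xi-ct)}\phi(x+z)$, and by the defining relation $c=\lambda_0(\xi,\mu,a_0)/\mu$ these agree. So if I write $U^\pm(t,x;z)=e^{-\mu(x\cdot\xi-ct)}\phi(x+z)+E^\pm(t,x;z)$, then $L^\pm$ applied to the main term reproduces exactly $\mu c\,e^{-\mu(x\cdot\xi-ct)}\phi(x+z)$ (using the eigenvalue identity again for the $\mathcal K_{\xi,\mu}$ part, since $\int k(y)e^{-\mu y\cdot\xi}\phi(x+y+z)dy$ is precisely $(\mathcal K_{\xi,\mu}\phi)(x+z)$ up to the common exponential factor), and I am left to show that $L^\pm$ applied to the error $E^\pm$ is $o\big(e^{-\mu(x\cdot\xi-ct)}\phi(x+z)\big)$. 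From Lemma \ref{existence-lm2}, $E^\pm(t,x;z)=o\big(e^{-\mu(x\cdot\xi-ct)}\big)$ uniformly in $z$; the difficulty is that $L^\pm$ is not simply a bounded multiplier of $E^\pm$ because of the convolution — a crude estimate only gives $L^\pm E^\pm = o\big(e^{-\mu(x\cdot\xi-ct)}\big)$ with a supremum of $o(1)$ taken over the whole $\delta_0$-neighborhood, which is exactly what is needed since $\phi$ is bounded above and below away from $0$.

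The main obstacle, and where I would spend the most care, is making the error estimate uniform in $z$ and genuinely $o(1)$ rather than merely bounded: one must use that the sub/super-solution bounds $\underline v^1 \le U^\pm \le \bar v$ are two-sided and have the same leading term $e^{-\mu(x\cdot\xi-ct)}\phi(x+z)$, with the gap controlled by $d_i e^{-\mu_1(x\cdot\xi-ct)}\phi_i(x+z)$ and $\mu_1>\mu$, so that the relative error $E^\pm/\big(e^{-\mu(x\cdot\xi-ct)}\phi(x+z)\big)=O\big(e^{-(\mu_1-\mu)(x\cdot\xi-ct)}\big)\to 0$. Feeding this quantitative decay rate through the convolution — which shifts the argument by at most $\delta_0$ and hence changes the exponential factor by at most the bounded factor $e^{\pm\mu_1\delta_0}$ — preserves the $o(1)$ relative bound, and together with the already-handled quadratic nonlinear correction this yields $U_t^\pm(t,x;z)=\mu c\, e^{-\mu(x\cdot\xi-ct)}\phi(x+z)(1+o(1))$, i.e. the claimed limit, uniformly in $z\in\RR^N$. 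I would close by noting the identical argument applies to both signs, since only the two-sided sandwich of Lemma \ref{existence-lm2} was used.
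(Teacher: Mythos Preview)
Your proposal is correct and follows essentially the same approach as the paper: write $U_t^\pm$ via the equation, subtract off the eigenvalue identity $\mu c\,\phi = (\mathcal K_{\xi,\mu}-I+a_0)\phi$, and control the remainder using the asymptotic $U^\pm(t,x;z)\sim e^{-\mu(x\cdot\xi-ct)}\phi(x+z)$ from Lemma~\ref{existence-lm2} together with the fact that the convolution only shifts the argument by at most $\delta_0$. The paper carries this out as a direct $\epsilon$--$M$ estimate rather than through your explicit decomposition $U^\pm = e^{-\mu(x\cdot\xi-ct)}\phi + E^\pm$, and it does not invoke the quantitative rate $O(e^{-(\mu_1-\mu)(x\cdot\xi-ct)})$ (the qualitative $o(1)$ from Lemma~\ref{existence-lm2} already suffices, as you yourself note), but these are purely stylistic differences.
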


\begin{proof}
We prove the lemma for $U^+(t,x;z)$. It can be proved similarly for $U^-(t,x;z)$.

First, let $U(t,x;z)=U^+(t,x;z)$.  By Lemma \ref{existence-lm2}, for any $\epsilon>0$, there is $M>0$ such that
for any  $x,z\in\RR^N$ and $t\in\RR$ with $x\cdot\xi-ct\geq M$,
\begin{equation}
\label{bd-eq1}
\big|\frac{U(t,x;z)}{e^{-\mu (x\cdot\xi-ct)}}-\phi(x+z)\big|<\epsilon
\end{equation}
and
\begin{equation}
\label{bd-eq2}
|f(x+z,U(t,x;z))-f(x+z,0)|<\epsilon.
\end{equation}
Observe that
\begin{equation}
\label{bd-eq3}
\mu c \phi(x+z)=\int_{\RR^N} e^{-\mu (y-x)\cdot\xi}k(y-x)\phi(y+z)dy-\phi(x+z)+a_0(x+z)\phi(x+z)
\end{equation}
for all $x,z\in\RR^N$,
where $a_0(x+z)=f(x+z,0)$, and
\begin{equation}
\label{bd-eq4}
U_t(t,x;z)=\int_{\RR^N} k(y-x)U(t,y;z)dy-U(t,x;z)+U(t,x;z)f(x+z,U(t,x;z))
\end{equation}
for all $t\in\RR$ and $x,z\in\RR^N$.
By \eqref{bd-eq1}-\eqref{bd-eq4}, we have
\begin{align*}
\big| \frac{U_t(t,x;z)}{e^{-\mu(x\cdot\xi-ct)}\phi(x+z)}-\mu c\big|&= \frac{1}{\phi(x+z)}\Big| \int_{\RR^N} e^{-\mu(y-x)\cdot\xi}k(y-x)
\big(\frac{U(t,y;z)}{e^{-\mu (y\cdot\xi-ct)}}-\phi(y+z)\big)
dy\\
&\quad -\big(\frac{U(t,x;z)}{e^{-\mu (x\cdot\xi -ct)}}-\phi(x+z)\big)\\
&\quad +\big(\frac{U(t,x;z)}{e^{-\mu (x\cdot\xi-ct)}}-\phi(x+z)\big) f(x+z,U(t,x;z))\\
&\quad +\phi(x+z)\big(f(x+z,U(t,x;z))- f(x+z,0)\big)\Big|\\
& \leq \epsilon \big[\int_{\RR^N} e^{-\mu(y-x)\cdot\xi}k(y-x)dy\\
&\quad +1+|f(x+z,U(t,x;z))|+\phi(x+z)\big]
\end{align*}
for all $x,z\in\RR^N$ and $t\in\RR$ with $x\cdot\xi-ct\geq M+\delta_0$, where $\delta_0$ is the nonlocal dispersal distance
in \eqref{main-eq}.
 It then follows
that
$$
\lim_{x\cdot\xi-ct\to\infty} \frac{U_t^\pm(t,x;z)}{e^{-\mu(x\cdot\xi-ct)}\phi(x+z)}=\mu c
$$
uniformly in $z\in\RR^N$.
\end{proof}

\begin{proof}[Proof of Theorem \ref{existence-thm}]
Let $\Phi(x,z)=\Phi^+(x,z)$ and $U(t,x;z)=U^+(t,x;z)$. Note that
$U(t,x;z)=u(t,x;\Phi(\cdot,z),z))$. We show that $\Phi(\cdot,\cdot)$ and $U(\cdot,\cdot;\cdot)$ satisfy
Theorem \ref{existence-thm}(1) and (2), respectively.

(1) It follows from Corollary \ref{existence-cor1} and Lemma \ref{existence-lm2}.

(2) By Lemmas \ref{existence-lm3} and \ref{existence-lm4},
we only need to prove that $U_t(t,x;z)>0$ for all $(t,x,z)\in\RR\times\RR^N\times\RR^N$.

For any $t_1<t_2$, we have
$$
u_{0,z,t_1}^+(x)\geq u_{0,z,t_2}^+(x)\quad \forall x,z\in\RR^N.
$$
Hence
\begin{align*}
u(t_1,x;\Phi^+(\cdot,z),z)&=u(t_2+t_1-t_2,x;\Phi^+(\cdot,z),z)\\
&=\lim_{n\to\infty} u(t_2,x;u(n+t_1-t_2,\cdot;u_{0,z,n}^+,z),z)\\
& \leq \lim_{n\to\infty}u(t_2,x;u(n+t_1-t_2,\cdot;u_{0,z,n+t_1-t_2}^+,z),z)\\
&=u(t_2,x;\Phi^+(\cdot,z),z).
\end{align*}
Therefore, $U(t,x;z)=u(t,x;\Phi^+(\cdot,z),z)$ is nondecreasing as $t$ increases.

Let $v(t,x;z)=u_t(t,x;\Phi^+(\cdot,z),z)$. Then $v(t,x;z)\geq 0$. By Lemma \ref{existence-lm4},
for any $t\in\RR$ and $z\in\RR^N$,
the set $\{x\in\RR^N\,|\,v(t,x;z)>0\}$ has positive Lebesgue measure. Note that $v(t,x;z)$ satisfies
\begin{equation}
\label{linear-eq1}
v_t(t,x;z)=\int_{\RR}k_{\delta}(y-x)v(t,y;z)dy-v(t,x;z)+a(t,x;z)v(t,x;z)
\end{equation}
where $a(t,x;z)=f(x+z,u(t,x;\Phi^+(\cdot,z),z))+u(t,x;\Phi^+(\cdot,z),z)f_u(x+z,u(t,x;\Phi^+(\cdot,z),z))$.
Then by Proposition \ref{comparison-prop}, we have
$$
v(t,x;z)>0\quad \forall t\in\RR,\, x,z\in\RR^N.
$$
This implies that  $U_t(t,x;z)>0$ for all $t\in\RR$ and $x,z\in\RR^N$.
\end{proof}

\section{Uniqueness and Continuity of Traveling Wave Solutions and Proof of Theorem \ref{uniqueness-thm}}
\label{uniqueness-section}

In this section, we investigate the uniqueness and continuity of traveling wave
solutions of \eqref{main-eq} and prove  Theorem \ref{uniqueness-thm} by the ``squeezing'' techniques developed in \cite{ChGu1} and  \cite{GuWu}.

Throughout this section, we fix $\xi\in S^{N-1}$ and $c>c^*(\xi)$. Let
$\mu^*$ be such that
$$
c^*(\xi)=\frac{\lambda_0(\mu^*,\xi,a_0)}{\mu^*}<\frac{\lambda_0(\tilde \mu,\xi,a_0)}{\tilde \mu}\quad \forall \tilde\mu \in(0,\mu^*).
$$
We fix $c>c^*(\xi)$  and $\mu\in(0,\mu^*)$ with $\frac{\lambda_0(\mu,\xi,a_0)}{\mu}=c$ and assume that  $U^\pm(t,x;z)$ and
$\Phi^\pm(x,z)$ are as in section \ref{existence-section}. We put $\Phi(x,z)=\Phi^+(x,z)$ and
$U(t,x;z)=U^+(t,x;z)$.
Let $U_1(t,x;z)=u(t,x;\Phi_1(\cdot,z),z)(\equiv\Phi_1(x-ct\xi,z+ct\xi))$.

We first prove some lemmas, some of which will also be used in next section.
By Lemmas \ref{existence-lm2} and  \ref{existence-lm4}, there is $M_0>0$ such that
\begin{equation}
\label{continuity-eq0}
0<\sup_{x\cdot\xi-ct\geq M_0,z\in\RR^N}\frac{U(t,x;z)}{U_t(t,x;z)}<\infty.
\end{equation}
 Observe that there is $\sigma_0>0$ such that
\begin{equation}
\label{continuity-eq1} U(t,x;z)\geq \sigma_0\quad {\rm for}\quad
x\cdot\xi-ct \leq M_{0}.
\end{equation}
Let
\begin{equation}
\label{continuity-eq2}
 \eta_0=\inf_{0<u\leq 2
u^+_{\sup}}(-f_u(x,u))\sigma_0,
\end{equation}
where $u_{\sup}^+=\sup_{x\in\RR^N}u^+(x)$.
Throughout the rest of this section, $M_0$, $\sigma_0$, $\eta_0$  are fixed and
satisfy \eqref{continuity-eq0}-\eqref{continuity-eq2}.

\begin{lemma}
\label{continuity-stability-lm1} Let $\epsilon_0\in (0,1)$ and $\eta\in
(0,(1-\epsilon_0)\eta_0)$. There is $l>0$ such that for each
$\epsilon\in (0,\epsilon_0)$,
  $$H^{\pm}(t,x;z)=(1\pm \epsilon e^{-\eta t})U(t \mp l \epsilon e^{-\eta t},x;z),\forall t\geq 0\,\, x,z\in \RR^N
  $$
 are super-/sub-solution of \eqref{main-shifted-eq}.
\end{lemma}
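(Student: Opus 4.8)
The plan is to verify directly that the functions $H^{\pm}(t,x;z)$ satisfy the required differential inequalities, exploiting the fact that $U$ is itself an entire solution of \eqref{main-shifted-eq} and that $U_t > 0$ (Theorem \ref{existence-thm}(2)). First I would introduce the shift variable $\tau^{\pm}(t) = t \mp l\epsilon e^{-\eta t}$, so that $H^{\pm}(t,x;z) = (1 \pm \epsilon e^{-\eta t}) U(\tau^{\pm}(t), x; z)$, and compute
\[
\frac{\p H^{\pm}}{\p t} = \mp \eta \epsilon e^{-\eta t} U(\tau^{\pm},x;z) + (1\pm \epsilon e^{-\eta t})(1 \pm l\eta\epsilon e^{-\eta t}) U_t(\tau^{\pm},x;z).
\]
Using that $U$ solves \eqref{main-shifted-eq} at time $\tau^{\pm}$, I would substitute for $U_t(\tau^{\pm},\cdot)$ the expression $\int_{\RR^N} k(y-x) U(\tau^{\pm},y;z)\,dy - U(\tau^{\pm},x;z) + U(\tau^{\pm},x;z) f(x+z, U(\tau^{\pm},x;z))$, and then compare the resulting quantity with the ``target'' nonlocal operator applied to $H^{\pm}$ itself, namely $\int_{\RR^N} k(y-x) H^{\pm}(t,y;z)\,dy - H^{\pm}(t,x;z) + H^{\pm}(t,x;z) f(x+z, H^{\pm}(t,x;z))$. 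Since the multiplicative prefactor $(1\pm\epsilon e^{-\eta t})$ is constant in $x$ and $y$, it factors cleanly through the convolution and the $-H^{\pm}$ term, so the discrepancy between $\frac{\p H^{\pm}}{\p t}$ and the target operator reduces to two kinds of terms: a term proportional to $\mp \eta \epsilon e^{-\eta t} U(\tau^{\pm},x;z)$ together with $\pm l\eta\epsilon e^{-\eta t}(1\pm\epsilon e^{-\eta t}) U_t(\tau^{\pm},x;z)$, and a reaction-comparison term $(1\pm\epsilon e^{-\eta t}) U(\tau^{\pm},x;z)\big[ f(x+z, U(\tau^{\pm},x;z)) - f(x+z, (1\pm\epsilon e^{-\eta t})U(\tau^{\pm},x;z)) \big]$.

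Next I would estimate these two contributions in the two spatial regions dictated by $M_0$. In the region $x\cdot\xi - c\tau^{\pm}(t) \le M_0$, I would use \eqref{continuity-eq1}, i.e. $U(\tau^{\pm},x;z) \ge \sigma_0$, and the monotonicity of $f$ in $u$ together with the definition \eqref{continuity-eq2} of $\eta_0$: the mean value theorem gives $f(x+z,U) - f(x+z,(1\pm\epsilon e^{-\eta t})U) = \mp \epsilon e^{-\eta t} U\, f_u(x+z,\theta)$ for some intermediate $\theta$, and since the reaction term then carries a factor $U^2 \ge \sigma_0 U$ with $-f_u \ge \eta_0/\sigma_0$ on the relevant range (bounded by $2u^+_{\sup}$, which needs the a priori bound $H^{\pm} \le 2u^+_{\sup}$ coming from the construction of $U$ via sub/supersolutions between $0$ and $u^+$), this term dominates $\mp\eta\epsilon e^{-\eta t} U$ in sign by the choice $\eta < (1-\epsilon_0)\eta_0$. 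In the complementary region $x\cdot\xi - c\tau^{\pm}(t) \ge M_0$, the reaction comparison term may no longer help, but there \eqref{continuity-eq0} provides a uniform bound $U(\tau^{\pm},x;z) \le C U_t(\tau^{\pm},x;z)$; so the ``bad'' term $\mp\eta\epsilon e^{-\eta t} U$ is controlled in absolute value by $C\eta\epsilon e^{-\eta t} U_t$, which is absorbed by the ``good'' term $\pm l\eta\epsilon e^{-\eta t}(1\pm\epsilon e^{-\eta t})U_t$ provided $l$ is chosen large enough (say $l > 2C$, uniformly over $\epsilon\in(0,\epsilon_0)$, using $U_t > 0$). One also needs $l\eta\epsilon_0 < 1$ so that $\tau^{\pm}$ is strictly monotone and the factor $(1\pm l\eta\epsilon e^{-\eta t})$ stays positive; this is a further smallness condition on $l$ or, alternatively, one shrinks $\epsilon_0$. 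Combining the two regions, $H^{+}$ satisfies $\frac{\p H^{+}}{\p t} \ge (\text{nonlocal operator on } H^{+})$ and $H^{-}$ the reverse inequality, which is exactly the super-/sub-solution property.

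The main obstacle I anticipate is bookkeeping the sign in the overlap/boundary of the two regions and making the choice of $l$ genuinely uniform in $\epsilon \in (0,\epsilon_0)$: the threshold $M_0$ is defined in terms of $t$ and $x$ through $x\cdot\xi - ct$, not through $x\cdot\xi - c\tau^{\pm}(t)$, and since $\tau^{\pm}(t)$ differs from $t$ by the bounded quantity $l\epsilon e^{-\eta t} \le l\epsilon_0$, the two region descriptions differ only by a bounded shift $cl\epsilon_0$ in the spatial cutoff; I would absorb this by slightly enlarging $M_0$ (or noting \eqref{continuity-eq1} holds with a possibly smaller $\sigma_0$ on the shifted region), so the estimates above still apply with the same constants after at most a harmless adjustment. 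A secondary subtlety is justifying the a priori upper bound $H^{\pm}(t,x;z) \le 2u^+_{\sup}$ needed to invoke \eqref{continuity-eq2}: this follows because $0 \le U \le u^+$ (it lies between the sub- and super-solutions used to construct it in Section \ref{existence-section}, with $\bar u \le u^+$), and $(1+\epsilon_0) \le 2$. Everything else is routine application of the comparison structure already set up in Proposition \ref{comparison-prop} and the pointwise differential inequalities verified above.
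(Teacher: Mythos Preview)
Your proposal is correct and follows essentially the same route as the paper: compute the defect directly, factor out $\epsilon e^{-\eta t} U$, and split the estimate into the two regions $x\cdot\xi - c\tau^{\pm}\le M_0$ (where the reaction term dominates via \eqref{continuity-eq1}--\eqref{continuity-eq2}) and $x\cdot\xi - c\tau^{\pm}\ge M_0$ (where the $lU_t$ term dominates via \eqref{continuity-eq0}). Two of your anticipated obstacles are in fact non-issues: the conditions \eqref{continuity-eq0}--\eqref{continuity-eq1} are statements about $U(s,x;z)$ in terms of $x\cdot\xi-cs$ for \emph{any} time argument $s$ (they encode properties of the wave profile), so evaluating at $s=\tau^{\pm}(t)$ already gives the right region with no shift needed; and the positivity of $1\pm l\eta\epsilon e^{-\eta t}$ is not required, since the algebra goes through regardless and the final bracketed quantity has the correct sign from $U_t>0$ and $-f_u>0$ alone.
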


 \begin{proof}
First we prove that $H^{+}(t,x;z)$ is a super-solution of
\eqref{main-shifted-eq}. Let $h=\epsilon e^{-\eta t}$ and $\tau=t - l
\epsilon e^{-\eta t}$. Then
 $$H^{+}(t,x;z)=(1+h)U(\tau,x;z),\forall t\geq 0,\,\, x,z\in\RR^N.$$
 By direct calculation, we have
 \begin{align*}
 &\frac{\partial H^{+}(t,x;z)}{\partial t}-[\int_{\RR^N}k(y-x)H^{+}(t,y;z)dy-H^{+}(t,x;z)+H^{+}(t,x;z)f(x+z,H^{+}(t,x;z))]\\
 &=-\eta h U(\tau,x;z) + (1+l\eta h ) [(\mathcal{K}-I)H^++f(x+z,U)H^+]-[(\mathcal{K}-I)H^++f(x+z,H)H^+]\\
 &=-\eta h U(\tau,x;z) + l\eta h  [(\mathcal{K}-I)H^++f(x+z,U)H^+]+[f(x+z,U)-f(x+z,H)]H^+\\
 &=-\eta h U(\tau,x;z) + l\eta h  (1+h)U_t(\tau,x;z)+[f(x+z,U)-f(x+z,H^+)](1+h)U(\tau,x;z)\\
 &= h \eta U(\tau,x;z)[-1 +
 l(1+h)\frac{U_t(\tau,x+z)}{U(\tau,x+z)}-f_{u}(x+z,u^*(\tau,x;z))(1+h)U(\tau,x;z)/\eta],
 %&\geq\eta h U(\tau,x)[-1 + l\frac{U_{\tau}(\tau,x)}{U(\tau,x)}]\\
 %&\geq 0
 \end{align*}
 where $u^*(\tau,x;z)$ is some number between $U(\tau,x;z)$ and
 $H^+(t,x;z)$.
We only need to prove that
\begin{equation}
\label{continuity-eq3} -1 +
l(1+h)\frac{U_{\tau}(\tau,x;z)}{U(\tau,x;z)}-f_{u}(x+z,U^*(\tau,x;z))(1+h)U(\tau,x)/\eta\geq
0
\end{equation}
for all $t\geq 0$ and $x,z\in\RR^N$.

If $t\geq 0$ and $x\in\RR^N$ are such that $x\cdot\xi-c\tau\leq
M_0$, by \eqref{continuity-eq1}, \eqref{continuity-eq2}, and the fact that
$U_t(\tau,x,;z)>0$, \eqref{continuity-eq3} holds.

If $t\geq 0$ and $x\in\RR^N$ are such that $x\cdot\xi -c\tau\geq
M_0$, and $l\geq \sup_{x\cdot\xi-c\tau\geq
M_0}\frac{U(\tau,x;z)}{U_t(\tau,x;z)}$, then \eqref{continuity-eq3}
also holds.
%This completes the proof of super-solution part.

By the similar arguments above, we can prove that $H^{-}(t,x;z)$ is a sub-solution of
\eqref{main-shifted-eq}. This completes the proof.

 \end{proof}

\begin{lemma}
\label{continuity-stability-lm2}
Let $\epsilon_0\in (0,1)$ be given and $l$ be as in Lemma \ref{continuity-stability-lm1}. For any given $0<\epsilon_1\leq \epsilon_0$,
 there exists constant  $M_1(\epsilon_1)>0$ such that for all $\epsilon \in (0,\epsilon_1]$
 $$
 (1-\epsilon)U(t+3l\epsilon,x;z) \leq U(t,x;z) \leq (1+\epsilon)U(t-3l\epsilon,x;z)\,\, \forall t\in\RR,\,\, x,z\in\RR^N,\,\,   x-ct \leq -M_1(\epsilon_1).
 $$
\end{lemma}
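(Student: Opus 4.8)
The plan is to read off the two bounds from three facts established earlier: the strict monotonicity $U_t(t,x;z)>0$ (Theorem \ref{existence-thm}(2)), the decay $U_t(t,x;z)\to 0$ uniformly in $z$ as $x\cdot\xi-ct\to-\infty$ (Lemma \ref{existence-lm3}), and the lower bound \eqref{continuity-eq1}, namely $U(t,x;z)\ge\sigma_0$ whenever $x\cdot\xi-ct\le M_0$. Since $U$ is nondecreasing in $t$, each of the two inequalities in the statement amounts to saying that over a time interval of length $3l\epsilon$ the solution grows by at most the factor $1+\epsilon$ (resp.\ by at most $1/(1-\epsilon)$), and this should hold because, deep in the back of the wave, $U$ stays above $\sigma_0$ while $U_t$ can be made arbitrarily small.

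First I would note that $t\mapsto U(t,x;z)$ is $C^1$ ($U$ is an entire solution of \eqref{main-shifted-eq} and $U_t$ is continuous in $t$ through the equation), so the fundamental theorem of calculus gives
$$
U(t+\tau,x;z)-U(t,x;z)=\int_{t}^{t+\tau}U_s(s,x;z)\,ds\qquad\text{for all }\tau\in\RR,\ t\in\RR,\ x,z\in\RR^N.
$$
Then, given $0<\epsilon_1\le\epsilon_0$, I would use Lemma \ref{existence-lm3} to choose $R>0$ (independent of $z$, by the uniformity there) such that $U_s(s,x;z)<\sigma_0/(3l)$ whenever $x\cdot\xi-cs\le -R$, and set $M_1(\epsilon_1):=R+3lc\,\epsilon_1$. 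For any $\epsilon\in(0,\epsilon_1]$ and any $t,x,z$ with $x\cdot\xi-ct\le -M_1(\epsilon_1)$, every $s$ in the window $[t-3l\epsilon,\,t+3l\epsilon]$ satisfies $x\cdot\xi-cs\le(x\cdot\xi-ct)+3lc\epsilon_1\le -R\,(\le M_0)$, so on that window $U_s(s,x;z)<\sigma_0/(3l)$ and, by \eqref{continuity-eq1}, $U(s,x;z)\ge\sigma_0$ simultaneously. Applying the displayed identity with $\tau=\pm 3l\epsilon$ then gives $|U(t\pm3l\epsilon,x;z)-U(t,x;z)|\le 3l\epsilon\cdot\sigma_0/(3l)=\epsilon\sigma_0$, so that $U(t,x;z)-U(t-3l\epsilon,x;z)\le\epsilon\sigma_0\le\epsilon\,U(t-3l\epsilon,x;z)$ and $U(t+3l\epsilon,x;z)-U(t,x;z)\le\epsilon\sigma_0\le\epsilon\,U(t+3l\epsilon,x;z)$; rearranging these yields $U(t,x;z)\le(1+\epsilon)U(t-3l\epsilon,x;z)$ and $(1-\epsilon)U(t+3l\epsilon,x;z)\le U(t,x;z)$, which is the claim.

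I do not expect a genuine difficulty here; the only delicate point is that $M_1$ must work uniformly for all $\epsilon\in(0,\epsilon_1]$, which is why its definition carries the extra slack $3lc\epsilon_1$ tied to the largest admissible shift $\epsilon_1$ rather than to $\epsilon$ itself --- this keeps $x\cdot\xi-cs$ below $-R$ throughout the time window regardless of $\epsilon$. The $z$-uniformity in Lemma \ref{existence-lm3} is also essential, since it is what makes $R$, and hence $M_1(\epsilon_1)$, independent of $z$ as the statement demands.
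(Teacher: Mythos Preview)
Your argument is correct and rests on exactly the same two ingredients as the paper's proof: the uniform decay $U_t\to 0$ as $x\cdot\xi-ct\to-\infty$ from Lemma \ref{existence-lm3}, and the lower bound $U\ge\sigma_0$ from \eqref{continuity-eq1}. The only difference is packaging. The paper introduces the auxiliary function $h(s)=(1+s)\,U(t-3ls,x;z)$, computes $h'(s)=U(t-3ls,x;z)-3l\,U_t(t-3ls,x;z)$, and uses Lemma \ref{existence-lm3} to make $h'(s)>0$ on $[-\epsilon_1,\epsilon_1]$ once $x\cdot\xi-ct\le -M_1(\epsilon_1)$; both inequalities then drop out at once from $h(-\epsilon)\le h(0)\le h(\epsilon)$. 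Your integration via the fundamental theorem of calculus and the explicit bound $U_s<\sigma_0/(3l)$ is the same estimate written out pointwise rather than encoded in the sign of $h'$, and your care in defining $M_1(\epsilon_1)=R+3lc\,\epsilon_1$ so that the whole time window stays in the good region is exactly the uniformity the paper needs (but leaves implicit) when it asserts $h'(s)>0$ for all $s\in[-\epsilon_1,\epsilon_1]$.
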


\begin{proof}
Let $h(s)=(1+s)U(t-3ls,x;z)$. Then,
$h'(s)=U(t-3 l s,x;z)-3 l U_t(t-3 ls,x;z)$. By Lemma \ref{existence-lm3}, there exists a $M(\epsilon_1)>0$ such that
$h'(s)>0$ for  $s \in [-\epsilon_1,\epsilon_1]$, $x-ct \leq -M_{1}(\epsilon_1)$, and $z\in\RR^N$. Hence, the lemma follows.
\end{proof}

\begin{lemma}
\label{continuity-lm2}
 For any $\epsilon > 0$, there exists a constant $C(\epsilon)\geq 1$ such that
$$
U_1(t-2\epsilon,x;z)\leq U(t,x;z)\leq U_1(t+2\epsilon,x;z)\quad
\forall t\in\RR,\,\, x,z\in\RR^N,\,\, x \cdot \xi-ct \geq C(\epsilon).
$$
\end{lemma}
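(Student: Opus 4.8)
The plan is to prove this by a pure tail-asymptotics comparison, with no appeal to the comparison principle: the two inequalities assert something only in the region $x\cdot\xi-ct\ge C(\epsilon)$, i.e. in the leading edge of the wave, where $U$ and $U_1$ are both small. The structural fact I would exploit is that $U(t,x;z)=\Phi(x-ct\xi,z+ct\xi)$ and $U_1(t,x;z)=\Phi_1(x-ct\xi,z+ct\xi)$, so shifting $t$ by $s$ just translates the first argument of $\Phi$ (resp. $\Phi_1$) by $-sc\xi$, while the sum of the two arguments stays equal to $x+z$. Combining this with the exponential tail asymptotics already established would give everything.

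Concretely, by Theorem~\ref{existence-thm}(1), $\Phi(y,w)=(1+o(1))e^{-\mu y\cdot\xi}\phi(y+w)$ as $y\cdot\xi\to\infty$, uniformly in $w\in\RR^N$; and by the standing hypothesis $\lim_{x\cdot\xi\to\infty}\Phi_1(x,z)/\Phi(x,z)=1$ uniformly in $z$, we also get $\Phi_1(y,w)=(1+o(1))e^{-\mu y\cdot\xi}\phi(y+w)$ uniformly in $w$ as $y\cdot\xi\to\infty$. Since $\phi\in X_p^+$ is continuous, periodic and strictly positive, it is bounded between two positive constants, so dividing by $\phi(x+z)$ is harmless, and I would obtain, for $s$ in any fixed bounded interval,
\[
\frac{U_1(t+s,x;z)}{U(t,x;z)}\longrightarrow e^{\mu c s}\qquad\text{as }x\cdot\xi-ct\to\infty,
\]
uniformly in $t\in\RR$, $z\in\RR^N$ and $s$ in that interval: indeed the first argument of $\Phi_1$ in $U_1(t+s,x;z)$ has $\xi$-component $x\cdot\xi-ct-cs$, which tends to $+\infty$ uniformly for bounded $s$ exactly when $x\cdot\xi-ct\to\infty$, and both numerator and denominator are asymptotic to $e^{-\mu(x\cdot\xi-ct)}\phi(x+z)$ up to the respective factors $e^{\mu cs}$ and $1$.

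Specializing to $s=\pm 2\epsilon$: since $\mu>0$ and $c>c^*(\xi)>0$ we have $\mu c>0$, hence $e^{2\mu c\epsilon}>1>e^{-2\mu c\epsilon}$. Therefore there are constants $C_1(\epsilon),C_2(\epsilon)$ such that $U_1(t+2\epsilon,x;z)\ge U(t,x;z)$ whenever $x\cdot\xi-ct\ge C_1(\epsilon)$ and $U_1(t-2\epsilon,x;z)\le U(t,x;z)$ whenever $x\cdot\xi-ct\ge C_2(\epsilon)$; taking $C(\epsilon)=\max\{1,C_1(\epsilon),C_2(\epsilon)\}$ completes the proof.

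The only delicate point is the uniformity in $t$ and $z$ of the displayed limit. Uniformity in $z$ is inherited directly from the uniform-in-$z$ statements of Theorem~\ref{existence-thm}(1) and of the hypothesis on $\Phi_1$; uniformity in $t$ is automatic because $U$ and $U_1$ depend on $t$ and $x$ only through the arguments fed into $\Phi$ and $\Phi_1$, so $x\cdot\xi-ct$ large pushes those arguments into the tail region regardless of the value of $t$ (and of bounded $s$). In particular no differential-equation or comparison-principle input enters this lemma; the genuinely new work of the section sits in Lemmas~\ref{continuity-stability-lm1} and~\ref{continuity-stability-lm2}, which will later be combined with this half-space estimate.
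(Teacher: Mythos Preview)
Your proof is correct and follows essentially the same route as the paper's own argument: both derive the tail asymptotics $U_1(t,x;z)\sim e^{-\mu(x\cdot\xi-ct)}\phi(x+z)$ from the hypothesis $\Phi_1/\Phi\to 1$ combined with the known asymptotics of $U$, and then use the multiplicative factor $e^{\pm 2\mu c\epsilon}$ arising from the time shift to obtain the two inequalities. The paper's proof is extremely terse (it records only the key limit and leaves the comparison implicit), whereas you have spelled out the translation structure $U(t,x;z)=\Phi(x-ct\xi,z+ct\xi)$, the uniformity in $t$ and $z$, and the positivity of $\mu c$; this extra care is helpful but does not constitute a different approach.
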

 \begin{proof}
 It follows from the fact that
 \begin{align*}
 \lim_{x\cdot\xi-ct \to\infty} \frac{U_1(t,x;z)}{e^{-\mu(x\cdot\xi- ct)}\phi(x+z)}&=\lim_{x\cdot\xi-ct\to\infty}
 \frac{U_1(t,x;z)}{U(t,x;z)}\frac{U(t,x;z)}{e^{-\mu(x\cdot\xi-ct)}\phi(x+z)}\\
 &=\lim_{x\cdot\xi-ct\to\infty}\frac{U(t,x;z)}{e^{-\mu(x\cdot\xi-ct)}\phi(x+z)}\\
 &
 =1
 \end{align*}
 uniformly in $z\in\RR^N$.
 \end{proof}

\begin{lemma}
\label{continuity-lm3}
Let $\epsilon_0\in (0,1)$ and $\eta_0$, $l$ be as in Lemma \ref{continuity-stability-lm1}.
 For any given  $\epsilon \in(0,\epsilon_0)$, there is  $\tau>0$  such that
$$
(1-\epsilon e^{-\eta t})U(t-\tau+l\epsilon e^{-\eta t},x)\leq U_1(t,x;z)\leq (1+\epsilon e^{-\eta t})U(t+\tau-l\epsilon e^{-\eta t},x;z)
$$
for all  $x,z \in \RR^N$ and $t\geq 0$.
\end{lemma}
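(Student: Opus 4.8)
The plan is to trap $U_1$ between the two rescaled, time-shifted copies of $U$ in the statement by the comparison principle, thereby reducing the assertion to the single time slice $t=0$. First I would verify that, for every $\tau>0$, the functions
$W^{+}(t,x;z):=(1+\epsilon e^{-\eta t})\,U(t+\tau-l\epsilon e^{-\eta t},x;z)$ and
$W^{-}(t,x;z):=(1-\epsilon e^{-\eta t})\,U(t-\tau+l\epsilon e^{-\eta t},x;z)$
are, respectively, a bounded super-solution and a bounded sub-solution of \eqref{main-shifted-eq} on $\{t\ge0\}$. This is precisely the computation carried out in the proof of Lemma \ref{continuity-stability-lm1}, run with the time argument $t\pm\tau\mp l\epsilon e^{-\eta t}$ in place of $t\mp l\epsilon e^{-\eta t}$: the essential point is that the dichotomy in that computation must be taken with respect to the value of the wave variable $x\cdot\xi-c(t\pm\tau\mp l\epsilon e^{-\eta t})$ at the time at which $U$ is actually evaluated, after which the bounds \eqref{continuity-eq0}--\eqref{continuity-eq2} apply verbatim (they constrain $U$ only through its own wave variable), so the same $l$ as in Lemma \ref{continuity-stability-lm1} works and the presence of $\tau$ is harmless. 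Since $U_1$ is a bounded solution of \eqref{main-shifted-eq}, Proposition \ref{comparison-prop}(2) then reduces the lemma to choosing $\tau>0$ large enough that $(1-\epsilon)\,U(-\tau+l\epsilon,x;z)\le U_1(0,x;z)\le(1+\epsilon)\,U(\tau-l\epsilon,x;z)$ holds for all $x,z\in\RR^N$.

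To establish this $t=0$ estimate I would split $\RR^N$ into the leading edge $\{x\cdot\xi\ge C_1\}$ and the bulk $\{x\cdot\xi\le C_1\}$. On the leading edge, Lemma \ref{continuity-lm2} (taken at $t=0$) furnishes $C_1>0$ with $U(-2\epsilon,x;z)\le U_1(0,x;z)\le U(2\epsilon,x;z)$; combined with the strict monotonicity $t\mapsto U(t,x;z)$ of Theorem \ref{existence-thm}(2) and the choice $\tau\ge(l+2)\epsilon$, both desired inequalities follow there. On the bulk the two sides are compared using that $U_1\le u^+$ and $U\le u^+$ (comparison with a constant super-solution relaxing to $u^+$), that $U$ and $U_1$ tend to $u^+$ far behind the front (Lemma \ref{existence-lm2}, \eqref{def-eq2}, $\min_{\RR^N}u^+>0$), that the leading-edge asymptotics of $\Phi$ in Theorem \ref{existence-thm}(1) make $U(\tau-l\epsilon,\cdot;z)$ uniformly close to $u^+$ on $\{x\cdot\xi\le C_1\}$ while $U(-\tau+l\epsilon,x;z)$ is either bounded by $u^+(x+z)$ (for $x\cdot\xi$ so negative that $U_1(0,\cdot;z)$ is itself within $\delta$ of $u^+$) or exponentially small in $\tau$, and a uniform positive lower bound for $U_1(0,\cdot;z)$ on every bounded set $\{a\le x\cdot\xi\le b\}$; a short case analysis over the location of $x\cdot\xi+c(\tau-l\epsilon)$ relative to the profile of $U$ then closes both inequalities provided $\delta$ is chosen small and $\tau$ large.

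I expect the main obstacle to be exactly this bulk lower bound at $t=0$. The hypothesis of Theorem \ref{uniqueness-thm}(1) controls $U_1/U$ only as $x\cdot\xi\to\infty$, so to compare $U_1(0,\cdot;z)$ with the small (possibly exponentially small in $\tau$) values of $U(-\tau+l\epsilon,\cdot;z)$ in the interior, one must rule out that $U_1$ decays strictly faster than $U$ there. This is provided by the strict positivity of the entire solution $U_1$: writing $U_1(0,\cdot;z)=u(T^{*},\cdot;U_1(-T^{*},\cdot;z),z)$, the nonlocal comparison principle Proposition \ref{comparison-prop}(4), the transport of positive mass by $\mathcal{K}$, and the $\mathbf{p}$-periodicity in $z$ (which reduces the bulk to a compact set) together give the uniform positive lower bound for $U_1$ on bounded sets used above. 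Everything else — the reduction to $t=0$, the leading-edge estimate from Lemma \ref{continuity-lm2}, the fact (inherited from Lemma \ref{continuity-stability-lm1}) that the same $l$ handles the shifted super- and sub-solutions, and the comparison-principle bookkeeping — is routine.
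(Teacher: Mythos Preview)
Your approach is essentially the paper's: reduce to the ordering at $t=0$ via the super-/sub-solutions of Lemma~\ref{continuity-stability-lm1} (your observation that the extra shift $\pm\tau$ is harmless is exactly right, since the computation depends only on the wave variable of $U$), then split into a leading-edge region handled by Lemma~\ref{continuity-lm2} and a bulk region. The paper is terser about the bulk --- it simply invokes \eqref{def-eq2} to assert that some $t_1\ge 2$ gives $U_1(t,x;z)\ge(1-\epsilon)U(t-t_1,x;z)$ on $\{x\cdot\xi-ct<C(1)\}$ --- whereas you write out the three sub-cases (far behind the front, an intermediate band, and the matching with the leading edge). The substance is the same: behind the front \eqref{def-eq2} gives $U_1\ge(1-\epsilon)u^+\ge(1-\epsilon)U(t-t_1,\cdot)$, and on the intermediate band one needs $U(t-t_1,\cdot)\to 0$ (clear, since its wave variable tends to $+\infty$) together with a uniform positive lower bound on $U_1$ there.

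You are right that this last lower bound is the only nontrivial point; it is precisely what the paper's opening line ``$0<U_1(t,x;z)<u^+(x+z)$ by Propositions~\ref{positive-solu-prop} and~\ref{comparison-prop}'' is meant to supply. One small caution about your proposed justification: reducing to a compact parameter set via \eqref{def-eq3}--\eqref{def-eq4} gives pointwise positivity of $\Psi_1(\eta,w)$ on $[a,b]\times(\RR^N/\mathbf{p}\ZZ^N)$, but $\Phi_1$ is a priori only measurable, so compactness alone does not yield a uniform infimum. The clean fix is to use, at time $-T$, both the far-left bound from \eqref{def-eq2} and the leading-edge bound $U_1(-T,\cdot;z)\ge\tfrac12 U(-T,\cdot;z)$ on $\{y\cdot\xi+cT\ge C_2\}$ (from the hypothesis of Theorem~\ref{uniqueness-thm}(1)); the region where neither applies has \emph{fixed} width $M_3+C_2$, so the explicit linear lower bound $u(T,\cdot;u_0,z)\ge e^{-CT}\sum_{n\ge0}\frac{T^n}{n!}\mathcal K^n u_0$ (as in the proof of Lemma~\ref{continuity-lm5}) transports positivity across it in a controlled, $z$-uniform way. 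With that adjustment your plan goes through verbatim.
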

\begin{proof} First by Propositions \ref{positive-solu-prop} and
\ref{comparison-prop},
$$
0<U(t,x;z)<u^+(x+z)\quad {\rm and}\quad 0<U_1(t,x;z)<u^+(x+z)\quad \forall t\in\RR,\,\, x,z\in\RR^N.
$$
Then by Lemma \ref{continuity-lm2}, there exists a constant $C(1)$ such that
$$
U_1(t,x;z)\geq U(t-2,x;z)\quad \forall t\in\RR\,\, x,z\in\RR^N,\,\, x\cdot\xi-ct\geq C(1).
$$
By \eqref{def-eq2}, there is $t_1\geq 2$ such that
$$
U_1(t,x;z)\geq (1-\epsilon) U(t-t_1,x;z)\quad \forall\,\, t\in\RR,\,\, x,z\in\RR^N,\,\, x\cdot\xi-ct<C(1).
$$
Thus
$$U_1(0,x;z)\geq (1-\epsilon)U(-t_1,x;z)=(1-\epsilon)U(-(t_1+l\epsilon )+l\epsilon,x;z)\quad\forall\, x,z\in\RR^N.
$$
It then follows  Lemma \ref{continuity-stability-lm1} that
$$
U_1(t,x;z)\geq (1-\epsilon e^{-\eta t})U(t-(t_1+l\epsilon)+l\epsilon e^{-\eta t},x;z)\quad \forall t\geq 0, x,z \in \RR^N.
$$

Similarly, it can be proved that there is $t_2\geq 2$ such that
$$
U_1(t,x;z)\leq (1+\epsilon e^{-\eta t})U(t+t_2+l\epsilon-l\epsilon e^{-\eta t},x;z) \quad \forall t\geq 0,\,\, x,z\in\RR^N.
$$

The lemma then follows with $\tau=\max\{t_1+l\epsilon, t_2+l \epsilon\}$.
\end{proof}

\begin{lemma}
\label{continuity-lm5}
Let $\tau>0, t_{1}>0,$ and $M \in \RR$ be given. Suppose that $W^\pm(t,x;t_1,z)$ are the solution of \eqref{main-shifted-eq}
with initial
 $$
 W^\pm(0,x;t_1,z)= U(t_{1}\pm\tau,x;z)\varsigma(x-ct_{1}-M)+U(t_{1}\pm2\tau,x;z)(1-\varsigma(x-ct_{1}-M)),
 $$
  where $\varsigma(s)=0$ for $s \leq 0$ and $\varsigma(s)=1$ for $s > 0$.
   Then
   $$
  W^+(1,x;t_1,z)\leq   (1+\epsilon)U(t_{1}+1+2\tau-3l\epsilon,x;z)
   $$
   and
\begin{equation*}
 W^-(1,x;t_1,z)\geq  (1-\epsilon)U(t_{1}+1-2\tau+3l\epsilon,x;z)
 \end{equation*}
for all  $x,z\in\RR^N$ with $x-c(1+t_{1}) \leq M$ provided that $0<\epsilon\ll 1$.
\end{lemma}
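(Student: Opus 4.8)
The plan is to prove the estimate for $W^+$ and to obtain the one for $W^-$ by the mirror argument (the super-solution $H^+$ and the stationary barrier $u^+(\cdot+z)$ of Lemma~\ref{continuity-stability-lm1} being replaced by the sub-solution $H^-$ and the trivial sub-solution $0$, and the role of $\lim_{x\cdot\xi-ct\to-\infty}(U-u^+)=0$ being played by $\lim_{x\cdot\xi-ct\to\infty}U=0$). Everything reduces, via the comparison principle (Proposition~\ref{comparison-prop}(2)), to producing a super-solution $\overline W(t,x;z)$ of \eqref{main-shifted-eq} on $t\in[0,1]$ with $\overline W(0,\cdot)\ge W^+(0,\cdot;t_1,z)$ on all of $\RR^N$ and $\overline W(1,x;z)\le(1+\epsilon)U(t_1+1+2\tau-3l\epsilon,x;z)$ whenever $x\cdot\xi-c(t_1+1)\le M$, where $l$ (and the decay rate $\eta$) are as fixed in Lemma~\ref{continuity-stability-lm1}.

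The building blocks are the perturbed waves of Lemma~\ref{continuity-stability-lm1}. Since every time-translate $U(\cdot+s_0,\cdot;z)$ is again an entire solution of \eqref{main-shifted-eq}, and the uniform estimates \eqref{continuity-eq0}--\eqref{continuity-eq2} used in the proof of that lemma are invariant under time-translation, the function $(1+\epsilon e^{-\eta t})U(t+s_0-l\epsilon e^{-\eta t},x;z)$ is a super-solution of \eqref{main-shifted-eq} for $t\ge0$, for every $s_0\in\RR$. I would take $\overline W$ to be a pointwise minimum of such a function, for a carefully chosen $s_0$, with the stationary super-solution $u^+(\cdot+z)$ (a pointwise minimum of super-solutions of \eqref{main-shifted-eq} is again one, as one checks directly from the definition since $\mathcal{K}$ is positivity-preserving and the zeroth-order term depends only on the pointwise value). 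The calibration is chosen so that at $t=1$ the first factor has shrunk from $1+\epsilon$ to $1+\epsilon e^{-\eta}<1+\epsilon$, and, since $U$ is increasing in $t$ by Theorem~\ref{existence-thm}(2), the $t=1$ target holds once $1+s_0-l\epsilon e^{-\eta}\le t_1+1+2\tau-3l\epsilon$. Matching the initial datum uses the patched structure of $W^+(0,\cdot)$: where $\varsigma=1$ one has $W^+(0,\cdot)=U(t_1+\tau,\cdot;z)$, and the strict monotonicity $U_t>0$ leaves a full $\tau$ of slack, which absorbs the factor $1+\epsilon$ and the slightly contracted shift for $0<\epsilon\ll1$; where $\varsigma=0$ one has $W^+(0,\cdot)=U(t_1+2\tau,\cdot;z)<u^+(\cdot+z)$, and there one leans on the $u^+$-branch together with the fact that on the relevant region $U$ is within relative error $O(\epsilon)$ of $u^+$ (Lemma~\ref{existence-lm2}, Proposition~\ref{positive-solu-prop}) and on the sharp control of $U$ and $U_t$ across the transition layer from \eqref{continuity-eq0}--\eqref{continuity-eq2} and Lemmas~\ref{existence-lm3}--\ref{existence-lm4}; all estimates are uniform in $z$ by periodicity in $z$. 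Once $\overline W$ is in hand, Proposition~\ref{comparison-prop}(2) on $[0,1]$ finishes the proof.

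The step I expect to be the main obstacle is exactly this calibration: extracting a genuinely \emph{contracted} shift $-3l\epsilon$ at $t=1$ while still dominating the patched datum $W^+(0,\cdot)$ at $t=0$. A single perturbation $(1+\epsilon e^{-\eta t})U(t+s_0-l\epsilon e^{-\eta t},\cdot)$ tends to lose ground over the unit time-step unless one fully exploits the $\tau$-slack on the $\varsigma=1$ side, the closeness of $U$ to its two limit states on the $\varsigma=0$ side, and the interplay between the decaying factor $1+\epsilon e^{-\eta t}$ and the ``self-undoing'' shift $l\epsilon e^{-\eta t}$, keeping every error term $o(\epsilon)$ on the transition layer. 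Combining the barriers so that the $t=0$ and $t=1$ requirements are met simultaneously on the whole line, uniformly in $z$, is the delicate bookkeeping of the argument.
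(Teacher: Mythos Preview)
Your barrier scheme cannot close on the transition strip, and this is not a matter of bookkeeping. Fix the $W^+$ case. To hit the target at $t=1$ you need $1+s_0-l\epsilon e^{-\eta}\le t_1+1+2\tau-3l\epsilon$, i.e.\ $s_0\le t_1+2\tau-l\epsilon(3-e^{-\eta})<t_1+2\tau$. But at $t=0$ on the $\varsigma=0$ side you must have $\min\{(1+\epsilon)U(s_0-l\epsilon,x;z),\,u^+(x+z)\}\ge U(t_1+2\tau,x;z)$. Since $U<u^+$ strictly, the $u^+$-branch never helps the minimum; you are forced to require $(1+\epsilon)U(s_0-l\epsilon,x;z)\ge U(t_1+2\tau,x;z)$ with $s_0-l\epsilon<t_1+2\tau$. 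Lemma~\ref{continuity-stability-lm2} delivers this only for $x\cdot\xi-ct_1\le -M_1(\epsilon_1)$. On the compact strip $x\cdot\xi-ct_1\in[-M_1(\epsilon_1),M]$, the wave is bounded away from both limit states, $U_t/U$ is bounded below (by \eqref{continuity-eq0} on the right portion and by continuity/compactness elsewhere), and hence $U(t_1+2\tau,\cdot)/U(s_0-l\epsilon,\cdot)-1$ is of order $\tau$, not $O(\epsilon)$. Your claim that ``on the relevant region $U$ is within relative error $O(\epsilon)$ of $u^+$'' is false precisely there; the $\tau$-jump at the interface is independent of $\epsilon$ and cannot be absorbed by the factor $1+\epsilon$.

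The paper resolves this by a completely different mechanism. One first uses only the crude comparison $W^-(t,\cdot)\ge U(t+t_1-2\tau,\cdot)$ (resp.\ $W^+\le U(\cdot+t_1+2\tau)$), which holds everywhere, and then quantifies the \emph{strict} excess at $t=1$ coming from the $\varsigma=1$ half-line. Setting $W:=W^--U(\cdot+t_1-2\tau)\ge 0$, one has $W_t\ge(\mathcal K-I+hI)W$ with a bounded $h$, hence $W(1,\cdot)\ge e^{-1+h}\sum_{n\ge0}\frac{\mathcal K^n}{n!}W(0,\cdot)$. Since $W(0,\cdot)\ge\tilde\sigma>0$ on a strip of positive width inside $\{x\cdot\xi-ct_1>M\}$ and $\mathcal K^n$ has support radius $n\delta_0$, finitely many terms push a fixed positive lower bound for $W(1,\cdot)$ across the whole bounded strip $x\cdot\xi-c(t_1+1)\in[-M_1(\epsilon_1),M]$. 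That lower bound is independent of $\epsilon$, so it dominates $U(1+t_1-2\tau+3l\epsilon,\cdot)-U(1+t_1-2\tau,\cdot)=O(\epsilon)$ once $\epsilon$ is small, yielding the contracted shift. This Harnack/strong-comparison step (Proposition~\ref{comparison-prop}(4) made quantitative via the Dyson expansion) is the missing idea in your proposal.
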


\begin{proof} We give a proof for $W^-(1,x;t_1,z)$. The case of $W^+$ can be proved similarly.
Note that
$$
W^-(0,x;t_1,z)\geq  U(t_1-2\tau,x;z)\quad \forall x,z\in\RR^N.
$$
 It then follows that
$$
W^-(1,x;t_1,z)>U(1+t_1-2\tau,x;z)\quad\forall x,z\in\RR^N.
$$

Take an  $\epsilon_1\in (0,\epsilon_0]$. By Lemma \ref{continuity-stability-lm2}, for any $\epsilon\in (0,\epsilon_1]$,
$$
W^-(1,x;t_1,z)>(1-\epsilon)
U(1+t_1-2\tau+3l\epsilon,x;z)\quad \forall
 x\cdot\xi
-c(t_1+1)\leq -M(\epsilon_1),\,\, z\in\RR^N.
$$

We claim that for $0< \epsilon\ll 1$,
$$
W^-(1,x;t_1,z)>(1-\epsilon)
U(1+t_1-2\tau+3l\epsilon,x;z)
\quad \forall x\cdot\xi-c(t_1+1)\in [-M(\epsilon_1),
M],\,z\in\RR^N.
$$
In fact,
let $W(t,x;z)=W^-(t,x;t_1,z)-U^+(t+t_1-2\tau,x;z)$ and
\begin{align*}
h=\inf_{t\in [0,1],x,z\in\RR^N}&\{[ W^-(t,x;t_1,z)f(x+z,u(t,x;u_{0,z},z))\\
&
-U(t+t_1-2\tau,x;z)f(x+z,U(t+t_1-2\tau,x;z))]\\
&\cdot\frac{1}{ W^-(t,x;t_1,z)-U(t+t_1-2\tau,x;z)}\}.
\end{align*}
Then
$$
W(0,x;z)=\begin{cases} U(t_1-\tau,x;z)-U(t_1-2\tau,x;z)\quad {\rm for}\quad x\cdot\xi -ct_1> M\cr
0\quad {\rm for}\quad x\cdot\xi -ct_1\leq M
\end{cases}
$$
and
$$
W_t(t,x;z)\geq \int_{\RR^N} k(y-x) W(t,y;z)dy-W(t,x;z)+h W(t,x;z)\quad \forall t\in [0,1],\,\, x,z\in\RR^N.
$$
It then follows  that
$$
W(1,\cdot;z)\geq e^{-1+h} (W(0,\cdot;z)+\mathcal{K} W(0,\cdot;z)+\frac{\mathcal{K}^2}{2!} W(0,\cdot;z)+\cdots),
$$
where $\mathcal{K}W(0,\cdot;z)$ is defined as in \eqref{k-delta-op} with $u$ being replaced by $W(0,\cdot;z)$.
By Lemma \ref{existence-lm2}, there are
$\tilde\sigma>0$ and $\tilde M>0$ such that
\begin{equation}
\label{stab-eq3}
U(t_1-\tau,x;z)-U(t_1-2\tau,x;z)\geq\tilde \sigma\quad \forall \, x,z\in\RR^N\,\,\,{\rm with}\,\,\, \tilde M\leq  x\cdot\xi-ct_1\leq \tilde M+1.
\end{equation}
This implies that
\begin{equation}
\label{stab-eq4}
W(1,x;z)\geq U(1+t_1-2\tau+3l\epsilon,x;z)-U(1+t_1-2\tau,x;z)
\end{equation}
for $x\cdot\xi-c(t_1+1)\in [-M(\epsilon_1),
M]$ and $z\in\RR^N$ provided that $0<\epsilon \ll 1$.
By \eqref{stab-eq3} and \eqref{stab-eq4}, we have
\begin{align*}
W^-(1,x;t_1,z)
&=W(1,x;z)+U(1+t_1-2\tau,x;z)\\
&\geq U(1+t_1-2\tau+3l\epsilon,x;z)\\
&\geq (1-\epsilon) U(1+t_1-2\tau+3l\epsilon,x;z)
\end{align*}
for $x\cdot\xi-c (1+t_1)\leq M$ and $z\in\RR^N$ provided that    $0<\epsilon\ll 1$.

\end{proof}

\begin{proof}[Proof of Theorem \ref{uniqueness-thm}]
(1)
Let
$$
A^+=\{\tau\geq 0\,|\, \limsup_{t\to\infty}\sup_{x,z\in\RR^N}\frac{U_1(t,x;z)}{U(t+2\tau,x;z)}\leq 1\}
$$
and
$$
A^-=\{\tau\geq 0\,|\, \liminf_{t\to\infty}\inf_{x,z\in\RR^N} \frac{U_1(t,x;z)}{U(t-2\tau,x;z)}\geq 1\}.
$$
By Lemma \ref{continuity-lm3}, $A^\pm\not =\emptyset$.
Let
$$
\tau^+=\inf\{\tau\,|\,\tau\in A^+\},\quad
\tau^-=\inf\{\tau\,|\,\tau\in A^-\}.
$$

We first claim that
$\tau^\pm\in A^\pm$.
 In fact, let $\tau_n\in A^+$ be such that $\tau_n\to \tau^+$.  Then
for any $0<\epsilon<1$,
there are $t_n\to \infty$  such that
$$
\frac{U_1(t,x;z)}{U(t+2\tau_n,x;z)}\leq 1+\epsilon\quad \forall x,z\in \RR^N,\,\, t\geq t_n
$$
and
$$
\frac{U(t+2\tau^+,x;z)-U(t+2\tau_n,x;z)}{U(t+2\tau_n,x;z)}>-\epsilon\quad \forall n\gg 1, \, t\in\RR,\, x,z\in\RR^N.
$$
Observe that
\begin{equation*}
\frac{U_1(t,x;z)}{U(t+2\tau^+,x;z)}=\frac{U_1(t,x;z)}{U(t+2\tau_n,x;z)}\frac{U(t+2\tau_n,x;z)}{U(t+2\tau^+,x;z)}
\end{equation*}
and
\begin{align*}
\frac{U(t+2\tau_n,x;z)}{U(t+2\tau^+,x;z)}&=
\frac{1}{1+\frac{U(t+2\tau^+,x;z)-U(t+2\tau_n,x;z)}{U(t+2\tau_n,x;z)}}\\
&\leq \frac{1}{1-\epsilon}\\
&\leq 1+\epsilon\quad \forall n\gg 1.
\end{align*}
Fix $n\gg 1$. Then
$$
\sup_{x,z\in\RR^N}\frac{U_1(t,x;z)}{U(t-2\tau^-,x;z)}\leq (1+ \epsilon)^2\quad \forall t\geq t_n.
$$
This implies that
$\tau^+\in A^+$. Similarly, we have $\tau^-\in A^-$.

Next we claim that
$\tau^\pm=0$.  Assume that $\tau^->0$. Note that
$$
\liminf_{t\to\infty}\inf_{x,z\in\RR^N}\frac{U_1(t,x;z)}{U(t-2\tau^-,x;z)}\geq 1.
$$
Hence for any $\bar \epsilon>0$, there is $t_0>0$ such that
$$
\frac{U_1(t_0,x;z)}{U(t_0-2\tau^-,x;z)}\geq 1- \bar \epsilon\quad \forall x,z\in\RR^N.
$$
This implies that
$$
U_1(t_0,x;z)\geq (1- \bar \epsilon) U(t_0-2\tau^-,x;z)\geq U^+(t_0-2\tau^-,x;z)-\hat \epsilon
$$
where $\hat \epsilon=\bar \epsilon\max_{t,x,z}U^+(t,x,z)$. By Lemma \ref{continuity-lm2},
for $x\cdot\xi- ct_0\geq M:= C(\tau^-/2)$,
$$
U_1(t_0,x;z)\geq U(t_0-\tau^-,x;z).
$$
This implies that
$$
U_1(t_0,x;z)\geq U(t_0-2\tau^-,x;z)(1-\zeta(x\cdot\xi -ct_0-M))+U(t_0-\tau^-,x;z)\zeta(x\cdot\xi-c t_0-M)-\hat \epsilon.
$$
Note that there is $K>0$ such that $U_1(t,x;z)+\hat\epsilon e^{Kt}$ is a super-solution of
\eqref{main-shifted-eq} for $t\in [0,1]$ provided that $0<\hat\epsilon\ll 1$.
By Lemma \ref{continuity-lm5}, for $0<\bar \epsilon\ll 1$ and $0<\epsilon\ll 1$,
$$
U_1(t_0+1,x;z)+\hat\epsilon e^K\geq (1-\epsilon)U(t_0+1-2\tau^-+3l\epsilon,x;z) \quad \forall x\cdot\xi-c(t_0+1)\leq M,\, z\in\RR^N,
$$
where $l$ is as in Lemma \ref{continuity-stability-lm1}.
Then for $0<\bar \epsilon\ll \epsilon\ll 1$,
$$
U_1(t_0+1,x;z)\geq (1-2\epsilon) U(t_0+1-2 z^-+3l\epsilon,x;z)\quad\forall  x\cdot\xi-c(t_0+1)\leq M,\, z\in\RR^N.
$$
By Lemma \ref{continuity-lm2} again, for $x\cdot\xi-c (t_0+1)\geq M$ , $z\in\RR^N$, and $0<\epsilon\ll 1$,
\begin{align*}
U_1(t_0+1,x;z)&>U(t_0+1-\tau^-,x;z)\\
&\geq (1-2\epsilon) U(t_0+1-\tau^-,x;z)\\
&\geq (1-2\epsilon) U(t_0+1-2\tau^-+3l\epsilon,x;z).
\end{align*}
Therefore for $0<\epsilon\ll 1$,
$$
U_1(t_0+1,x;z)\geq (1-2\epsilon) U(t_0+1-2\tau^-+3l \epsilon,x;z)\quad \forall x,z\in\RR^N.
$$
By Lemma \ref{continuity-stability-lm1},
$$
U_1(t_0+t+1,x;z)\geq (1-2\epsilon e^{-\tau t}) U(t_0+1+t-2\tau^-+2l\epsilon e^{-\eta t}+l\epsilon,x;z)\quad\forall t\geq 0,\, x,z\in\RR^N.
$$
It then follows that
$$
\tau^--\frac{l\epsilon}{2}\in A^-.
$$
this is a contradiction. Therefore $\tau^-=0$.
Similarly, we have $\tau^+=0$.

We now prove that $\Phi_1(x,z)=\Phi(x,z)$.
Recall that $U_1(t,x;z)=\Phi_1(x-ct\xi,z+ct\xi)$ and $U(t,x;z)=\Phi(x-ct\xi,z+ct\xi)$. Hence
\begin{align*}
\inf_{x,z\in\RR^N} \frac{U_1(t,x;z)}{U(t,x;z)}&=\inf_{x,z\in\RR^N}\frac{\Phi_1(x-ct\xi,z+ct\xi)}{\Phi(x-ct\xi,z+ct\xi)}\\
&=\inf_{x,z\in\RR^N}\frac{\Phi_1(x,z)}{\Phi(x,z)}
\end{align*}
and
\begin{align*}
\sup_{x,z\in\RR^N} \frac{U_1(t,x;z)}{U(t,x;z)}&=\sup_{x,z\in\RR^N}\frac{\Phi_1(x-ct\xi,z+ct\xi)}{\Phi(x-ct\xi,z+ct\xi)}\\
&=\sup_{x,z\in\RR^N}\frac{\Phi_1(x,z)}{\Phi(x,z)}
\end{align*}
This together with $\tau^\pm=0$ implies that
$$
\inf_{x,z\in\RR^N}\frac{\Phi_1(x,z)}{\Phi(x,z)}=\sup_{x,z\in\RR^N}\frac{\Phi_1(x,z)}{\Phi(x,z)}=1.
$$
We then must have
$\Phi_1(x,z)\equiv \Phi(x,z)$.

(2) Let $\Phi_1(x,z)=\Phi^-(x,z)(=U^-(0,x;z))$. By (1), $\Phi^-(x,z)=\Phi(x,z)$.
Recall that $\Phi^-(x,z)$ is lower semi-continuous and $\Phi^+(x,z)$ is upper semi-continuous.
We then must have that $\Phi(x,z)$ is continuous in $(x,z)\in\RR^N\times\RR^N$.

\end{proof}

\begin{corollary}
\label{continuity-cor1}
Let $\Phi(x,z)$ be as above. Then
$$
\lim_{\tau\to\infty}u(\tau,x;\underline u(0,\cdot;z,\tau,d_1,b),z)=\lim_{\tau\to\infty}u(\tau,x;\bar u(0,\cdot;z,\tau,d_2),z)=\Phi(x,z)
$$
for all $d_1\gg 1$, $d_2>0$, $0<b\ll 1$, and $x,z\in\RR^N$.
\end{corollary}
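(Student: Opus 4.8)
\emph{Proof proposal.} The plan is to recognize that, for arbitrary admissible parameters, the two limits in the statement produce the very same traveling wave that was constructed in Section~\ref{existence-section} for the fixed parameters $d_1^*,d_2^*,b^*$, and then to identify it with $\Phi$ using the uniqueness result Theorem~\ref{uniqueness-thm}(1). Fix $d_1\gg1$, $d_2>0$, $0<b\ll1$. First I would note that the monotonicity proved in Proposition~\ref{monotonicity-prop} holds verbatim when $u^\pm_{0,z,T}$ is replaced by $\underline u(0,\cdot;z,T,d_1,b)$, resp.\ $\bar u(0,\cdot;z,T,d_2)$, since its proof uses only Propositions~\ref{sub-solution-prop3} and \ref{sup-solution-prop1}, which are valid for every such choice of parameters. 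Hence the limits
\begin{align*}
\Psi^-(x,z)&:=\lim_{\tau\to\infty}u(\tau,x;\underline u(0,\cdot;z,\tau,d_1,b),z),\\
\Psi^+(x,z)&:=\lim_{\tau\to\infty}u(\tau,x;\bar u(0,\cdot;z,\tau,d_2),z)
\end{align*}
exist (monotonically in $\tau$) for all $x,z\in\RR^N$, and the associated $U^\pm_{\mathrm{new}}(t,x;z)=\lim_{\tau\to\infty}u(t+\tau,x;\cdot,z)$ exist for all $t\in\RR$.

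Next I would repeat the arguments of Section~\ref{existence-section} with these data in place of $u^\pm_{0,z,T}$. Lemmas~\ref{existence-lm1}--\ref{existence-lm2} and Corollary~\ref{existence-cor1} invoke the particular $d_1^*,d_2^*,b^*$ only through the following three features, each of which persists for the present data: (i) $\underline u$ and $\bar u$ are sub- and super-solutions of \eqref{main-shifted-eq}; (ii) in the region where $x\cdot\xi+c\tau$ is large, $\underline u$ and $\bar u$ lie between $e^{-\mu(x\cdot\xi+c\tau)}\phi(x+z)-d_1e^{-\mu_1(x\cdot\xi+c\tau)}\phi_1(x+z)$ and $e^{-\mu(x\cdot\xi+c\tau)}\phi(x+z)+d_2e^{-\mu_1(x\cdot\xi+c\tau)}\phi_1(x+z)$ with $\mu_1>\mu$; and (iii) after one application of the semiflow the data dominate a positive constant on a $\tau$-independent half-space $\{x\cdot\xi<M\}$ (which feeds Proposition~\ref{positive-solu-prop}) while staying below $u^+$. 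Consequently $\Psi^\pm(\cdot,\cdot)$ generate traveling wave solutions of \eqref{main-eq} in the direction of $\xi$ with speed $c$, and
\begin{align*}
&\lim_{x\cdot\xi\to\infty}\frac{\Psi^\pm(x,z)}{e^{-\mu x\cdot\xi}\phi(x+z)}=1,\\
&\lim_{x\cdot\xi\to-\infty}\bigl(\Psi^\pm(x,z)-u^+(x+z)\bigr)=0,
\end{align*}
both uniformly in $z\in\RR^N$.

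Finally, since $\Phi=\Phi^+$ has the same leading behavior $\lim_{x\cdot\xi\to\infty}\Phi(x,z)/(e^{-\mu x\cdot\xi}\phi(x+z))=1$ (Lemma~\ref{existence-lm2}), we get $\lim_{x\cdot\xi\to\infty}\Psi^\pm(x,z)/\Phi(x,z)=1$ uniformly in $z$. Applying Theorem~\ref{uniqueness-thm}(1) twice, with $\Phi_1=\Psi^+$ and with $\Phi_1=\Psi^-$, yields $\Psi^+\equiv\Phi$ and $\Psi^-\equiv\Phi$, i.e.\ $\lim_{\tau\to\infty}u(\tau,x;\underline u(0,\cdot;z,\tau,d_1,b),z)=\lim_{\tau\to\infty}u(\tau,x;\bar u(0,\cdot;z,\tau,d_2),z)=\Phi(x,z)$, which is the assertion.

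The main obstacle I expect is item (iii): verifying for arbitrary admissible $(d_1,b,d_2)$ that the profiles $\underline u(0,\cdot;z,\tau,d_1,b)$ and $\bar u(0,\cdot;z,\tau,d_2)$ still sit below $u^+$ on one side and, after one semiflow step, above a positive constant on a $\tau$-independent half-space on the other, so that Proposition~\ref{positive-solu-prop} delivers the convergence to $u^+$ as $x\cdot\xi\to-\infty$. For the super-solution case one uses $\bar u(0,\cdot;z,\tau,d_2)\ge\bar u(0,\cdot;z,\tau,0)\ge u^-_{0,z,\tau}$ (the last inequality being the compatibility of the reference data arranged in Section~\ref{subsection-sub-super-solu}) together with the lower bound already established for $\Phi^-$; for the sub-solution case one uses its built-in $\max\{b\phi_0,\cdot\}$ structure for the lower bound and a comparison with $\bar u(0,\cdot;z,\tau,d_2')$ for suitably large $d_2'$, together with $\bar u\le u^+$, for the upper bound. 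Crucially this never requires $\underline u(0,\cdot;z,\tau,d_1,b)\le\bar u(0,\cdot;z,\tau,d_2)$ for the present parameters — each profile is compared directly against $\Phi$ via the uniqueness theorem.
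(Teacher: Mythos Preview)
Your proposal is correct and follows essentially the same route as the paper: rerun the Section~\ref{existence-section} construction with general admissible parameters to produce traveling-wave profiles with the correct front asymptotic $e^{-\mu x\cdot\xi}\phi(x+z)$, and then identify them with $\Phi$ via uniqueness. The paper's proof is much terser (and contains a misreference to a nonexistent ``Theorem~\ref{existence-thm}(3)'' and an apparent swap of $\Phi^+/\Phi^-$), but the logical content is the same; your explicit itemization of the three features (i)--(iii) that the construction actually uses is a helpful clarification.
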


\begin{proof}
By the arguments of Theorem \ref{existence-thm}(3), for any $d_1\gg 1$ and $0<b\ll 1$,
$$
\lim_{\tau\to\infty}u(\tau,x;\underline u(0,\cdot;z,\tau,d_1,b),z)=\Phi^+(x,z)(=\Phi(x,z))\quad \forall x,z\in\RR^N,
$$
and for any $d_2\gg 1$,
$$
\lim_{\tau\to\infty}u(\tau,x;\bar u(0,\cdot;z,\tau,d_2),z)=\Phi^-(x,z)(=\Phi(x,z))\quad \forall x,z\in\RR^N.
$$
The corollary then follows.
\end{proof}

\section{Stability of Traveling Wave Solutions and Proof of Theorem \ref{stability-thm}}
\label{stability-section}

In this section, we investigate the stability  of traveling wave solutions of
\eqref{main-eq} and prove Theorem \ref{stability-thm}.

Throughout this section, we fix $\xi\in S^{N-1}$ and $c>c^*(\xi)$. Let
$\mu^*$ be such that
$$
c^*(\xi)=\frac{\lambda_0(\mu^*,\xi,a_0)}{\mu^*}<\frac{\lambda_0(\tilde \mu,\xi,a_0)}{\tilde \mu}\quad \forall \tilde\mu \in(0,\mu^*).
$$
We fix $c>c^*(\xi)$  and $\mu\in(0,\mu^*)$ with $\frac{\lambda_0(\mu,\xi,a_0)}{\mu}=c$. Let
$U(t,x;z)=U^+(t,x;z)$, where $U^+(t,x;z)$ is as in  section \ref{existence-section}.
We put
$u(t,x)=u(t,x;u_0,0)$, where $u_0$ is as in Theorem \ref{stability-thm}, and  put
$U(t,x)=U^+(t,x;0)$. We can prove Theorem \ref{stability-thm} by Lemmas \ref{stability-lm1}-\ref{stability-lm3} and the similar arguments in Theorem \ref{uniqueness-thm}. Here we only state these lemmas without proofs, which can be proved by properly modifying the arguments in their counterparts of Lemmas \ref{continuity-lm2}-\ref{continuity-lm5}.

\begin{lemma}
\label{stability-lm1}
For any $\epsilon > 0$,
there exists a constant $C_0(\epsilon)\ge 1$ such that
 $$
 u(t-2\epsilon,x) \leq U(t,x) \leq u(t+2\epsilon,x)\quad \forall x \cdot \xi-ct \geq C_0(\epsilon),\,\,t\geq 2\epsilon.
 $$
\end{lemma}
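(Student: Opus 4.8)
The plan is to imitate the proof of Lemma \ref{continuity-lm2} almost verbatim, the only genuinely new ingredient being the sharp leading‑edge behaviour of $u(t,x)=u(t,x;u_0,0)$:
\begin{equation*}
\lim_{x\cdot\xi-ct\to\infty}\frac{u(t,x)}{e^{-\mu(x\cdot\xi-ct)}\phi(x)}=1\qquad\text{uniformly in }t\ge0 .
\end{equation*}
Granting this, Lemma \ref{existence-lm2} (applied with $z=ct\xi$, so that $\Phi^+(x-ct\xi,ct\xi)=U(t,x)$) gives the same limit with $u$ replaced by $U$. Since $\frac{u(t\pm2\epsilon,x)}{e^{-\mu(x\cdot\xi-ct)}\phi(x)}=e^{\pm2\mu c\epsilon}\,\frac{u(t\pm2\epsilon,x)}{e^{-\mu(x\cdot\xi-c(t\pm2\epsilon))}\phi(x)}\to e^{\pm2\mu c\epsilon}$ uniformly in $t\ge2\epsilon$, with $e^{-2\mu c\epsilon}<1<e^{2\mu c\epsilon}$ while $\frac{U(t,x)}{e^{-\mu(x\cdot\xi-ct)}\phi(x)}\to1$, there is $C_0(\epsilon)\ge1$ with $u(t-2\epsilon,x)\le U(t,x)\le u(t+2\epsilon,x)$ whenever $x\cdot\xi-ct\ge C_0(\epsilon)$ and $t\ge2\epsilon$, which is the assertion.

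To establish the displayed estimate I would fix $\delta\in(0,1)$ and sandwich $u$ between a sub‑ and a super‑solution of the type built in Section 3 whose leading exponentials carry prefactors $1\mp\tfrac\delta2$. First, since $u_0$ is bounded, choose a constant $K\ge\|u_0\|_X$ with $f(\cdot,K)\le0$; then $K$ is a super‑solution and $0\le u(t,\cdot)\le K$ for all $t\ge0$ by Proposition \ref{comparison-prop}. By $\lim_{x\cdot\xi\to\infty}u_0(x)/U(0,x)=1$ and Lemma \ref{existence-lm2} at $t=0$, fix $R_\delta$ with $(1-\tfrac\delta2)e^{-\mu x\cdot\xi}\phi(x)\le u_0(x)\le(1+\tfrac\delta2)e^{-\mu x\cdot\xi}\phi(x)$ for $x\cdot\xi\ge R_\delta$. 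For the lower bound, take $T>0$ with $e^{-\mu cT}=1-\tfrac\delta2$ and the function $\underline{v}^1(t,x;0,T,d_1)$ of \eqref{v-underbar-eq1}; by Proposition \ref{sub-solution-prop1} (whose proof never uses the sign of $T$) it is a sub‑solution once $d_1\ge d_0$, and enlarging $d_1$ further gives $\underline{v}^1(0,\cdot;0,T,d_1)\le u_0$ on $\RR^N$ — on $\{x\cdot\xi\ge R_\delta\}$ because $\underline{v}^1(0,x)\le(1-\tfrac\delta2)e^{-\mu x\cdot\xi}\phi(x)\le u_0(x)$, and on $\{x\cdot\xi<R_\delta\}$ because there $\mu_1>\mu$ keeps $e^{(\mu_1-\mu)(x\cdot\xi+cT)}\phi(x)/\phi_1(x)$ bounded, so $\underline{v}^1(0,x)\le0\le u_0(x)$ once $d_1$ exceeds that bound. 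Since $d_1\ge d_0$, the proof of Proposition \ref{sub-solution-prop1} shows $\underline{v}^1>0$ forces $x\cdot\xi+cT-ct\ge0$, so the positive part of $\underline{v}^1$ is bounded by $\max_x\phi(x)$ and $\max\{\underline{v}^1,0\}$ is a bounded sub‑solution lying below $u_0$ at $t=0$. By Proposition \ref{comparison-prop}, $u(t,x)\ge\underline{v}^1(t,x)$ for $t\ge0$, hence $\frac{u(t,x)}{e^{-\mu(x\cdot\xi-ct)}\phi(x)}\ge(1-\tfrac\delta2)-d_1e^{-\mu_1cT}\frac{\phi_1(x)}{\phi(x)}e^{-(\mu_1-\mu)(x\cdot\xi-ct)}$, whose last term $\to0$ uniformly in $t\ge0$ as $x\cdot\xi-ct\to\infty$ (as $\phi_1/\phi$ is bounded); thus $\liminf_{x\cdot\xi-ct\to\infty}\inf_{t\ge0}\frac{u(t,x)}{e^{-\mu(x\cdot\xi-ct)}\phi(x)}\ge1-\tfrac\delta2$.

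The upper bound is the mirror image: with $T'<0$, $e^{-\mu cT'}=1+\tfrac\delta2$, the function $\bar v(t,x;0,T',d_2)=e^{-\mu(x\cdot\xi+cT'-ct)}\phi(x)+d_2e^{-\mu_1(x\cdot\xi+cT'-ct)}\phi_1(x)$ is a super‑solution by (the proof of) Proposition \ref{sup-solution-prop1}; it dominates $u_0$ at $t=0$ on $\{x\cdot\xi\ge R_\delta\}$ via its first term and on $\{x\cdot\xi<R_\delta\}$ once $d_2$ is large enough that $d_2e^{-\mu_1(x\cdot\xi+cT')}\phi_1(x)\ge\|u_0\|_X$ there, so $\min\{\bar v,K\}$ is a bounded super‑solution above $u_0$ at $t=0$. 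By Proposition \ref{comparison-prop}, $u(t,x)\le\bar v(t,x)$ for $t\ge0$, and as above $\limsup_{x\cdot\xi-ct\to\infty}\sup_{t\ge0}\frac{u(t,x)}{e^{-\mu(x\cdot\xi-ct)}\phi(x)}\le1+\tfrac\delta2$. Letting $\delta\downarrow0$ proves the displayed estimate, hence the lemma.

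I expect the only real difficulty to be that $u_0$ is controlled only near the leading edge while being otherwise an arbitrary bounded nonnegative function, so the barriers must be tuned — through the large choices of $d_1$, $d_2$ and the cap $K$ — to dominate, respectively be dominated by, $u_0$ on all of $\RR^N$. Once that sandwiching is in place, the passage from the asymptotics to the two inequalities of the lemma is the routine transcription of the proof of Lemma \ref{continuity-lm2} that the text already anticipates.
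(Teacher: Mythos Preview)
Your proposal is correct and is precisely the ``proper modification of the argument in Lemma \ref{continuity-lm2}'' that the paper alludes to without spelling out. The paper gives no proof here, so the genuine content you supply---namely the uniform leading-edge asymptotic $\lim_{x\cdot\xi-ct\to\infty}u(t,x)/\bigl(e^{-\mu(x\cdot\xi-ct)}\phi(x)\bigr)=1$ for $t\ge0$, obtained by sandwiching $u$ between the sub-solution $\max\{\underline v^1,0\}$ and the super-solution $\min\{\bar v,K\}$ with suitably shifted $T,T'$ and enlarged $d_1,d_2$---is exactly what is needed, and your verification that the proofs of Propositions \ref{sub-solution-prop1} and \ref{sup-solution-prop1} do not use the sign of $T$ is to the point. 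One small remark: to invoke Proposition \ref{comparison-prop}(2) you correctly pass to the bounded barriers $\max\{\underline v^1,0\}$ and $\min\{\bar v,K\}$ (since $\underline v^1$ and $\bar v$ themselves are unbounded); it may be worth noting explicitly that these are sub-/super-solutions by the same a.e.\ argument as in Proposition \ref{sub-solution-prop3}.
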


\begin{lemma}
\label{stability-lm2}
Let $\epsilon_0$, $\eta$, and $l$ be as in Lemma \ref{continuity-stability-lm1}.
For given $\epsilon\in (0,\epsilon_0)$, there are $t_\pm>0$ and $\tau_\pm>0$ such that
 $$
 (1-\epsilon e^{-\eta (t-t_-)})U(t-\tau_{-}+l\epsilon e^{-\eta (t-t_-)},x) \leq u(t,x) \leq (1+\epsilon e^{-\eta (t-t_+)})U(t+\tau_{+}
 -l\epsilon e^{-\eta (t-t_+)},x)
 $$
 for all
 $ x \in \RR^N$ and $t \geq \max\{t_{-},t_+\}$.
 \end{lemma}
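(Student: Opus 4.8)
The plan is to follow the proof of Lemma~\ref{continuity-lm3}, replacing the two ingredients that there rely on $U_1$ being a genuine traveling wave by their non-autonomous analogues. The control of $U_1$ for $x\cdot\xi-ct$ large (which came from Lemma~\ref{continuity-lm2}) will be replaced by Lemma~\ref{stability-lm1}, and the control of $U_1$ for $x\cdot\xi-ct$ very negative (which for a traveling wave is just \eqref{def-eq2}) will be replaced by the spreading statement Proposition~\ref{positive-solu-prop}. Unlike in the traveling-wave case, these two estimates leave a bounded ``middle'' slab of values of $x\cdot\xi-ct$ uncontrolled, and there I will insert a lower bound coming from the traveling sub-solution of Proposition~\ref{sub-solution-prop3}. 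Once $u(t_0,\cdot)$ is trapped between two time-shifted copies of $U$ at one large time $t_0$, Lemma~\ref{continuity-stability-lm1} propagates the trapping forward with the exponential gain built into $H^{\pm}$.

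I would first record the preliminaries: by Propositions~\ref{positive-solu-prop} and~\ref{comparison-prop}, $0<U(t,x)<u^+(x)$ for all $t,x$; since $u_0$ is bounded and $f(x,u)<0$ for $u\gg 1$, comparison with a large constant super-solution gives a time $T^*$ and, for any prescribed margin $\epsilon'$, the bound $u(t,x)\le(1+\epsilon')u^+(x)$ for all $x$ and $t\ge T^*$; and, since $\liminf_{x\cdot\xi\to-\infty}u_0>0$, Proposition~\ref{positive-solu-prop} applies to $u_0$ and, iterated on the later profiles, yields for every margin and every $R\in\RR$ a time $T$ with $u(t,x)\ge(1-\epsilon')u^+(x)$ for $x\cdot\xi\le R$ and $t\ge T$. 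For the lower bound, fix $\epsilon\in(0,\epsilon_0)$, let $l$ be as in Lemma~\ref{continuity-stability-lm1} and $C_0=C_0(1)$ as in Lemma~\ref{stability-lm1}. I claim one can choose $t_0$ large and $t_1\ge 2$ with $u(t_0,x)\ge(1-\epsilon)U(t_0-t_1,x)$ for all $x$, checked on three ranges of $x\cdot\xi$: for $x\cdot\xi-ct_0\ge C_0$, Lemma~\ref{stability-lm1} gives $u(t_0,x)\ge U(t_0-2,x)\ge U(t_0-t_1,x)$ using $t_1\ge2$ and the monotonicity $U_t>0$ of Theorem~\ref{existence-thm}(2); for $x\cdot\xi\le -L$ (with $L$ fixed below), the spreading bound gives $u(t_0,x)\ge(1-\epsilon)u^+(x)\ge(1-\epsilon)U(t_0-t_1,x)$; and on the bounded slab $-L\le x\cdot\xi<ct_0+C_0$ one uses that, after one application of Proposition~\ref{positive-solu-prop} the profile $u(T_1,\cdot)$ dominates a sub-solution $\underline u(\cdot,\cdot;0,T,d_1,b)$ of Proposition~\ref{sub-solution-prop3} (fitting it requires choosing $b\ll1$, then $T$ via \eqref{b-cond-eq1}, $d_1$ large, and the tail estimate $u_0(x)/U(0,x)\to1$), so that $u(t,x)\ge\underline u(t,x;0,T,d_1,b)$ for $t\ge T_1$; since this sub-solution carries a plateau of fixed height that travels at speed $c$, $u(t_0,\cdot)$ is bounded below on that slab by a fixed constant $\sigma^*>0$ for all large $t_0$, while $L$ and then $t_1$ are chosen so that $U(t_0-t_1,x)=\Psi(x\cdot\xi-c(t_0-t_1),x)\le\sigma^*$ on all of $\{x\cdot\xi\ge-L\}$, which is possible because $\Psi(\zeta,\cdot)\to0$ as $\zeta\to+\infty$. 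Having $u(t_0,\cdot)\ge(1-\epsilon)U(t_0-t_1,\cdot)$ globally, I apply Lemma~\ref{continuity-stability-lm1} to the time-shifted wave $s\mapsto U(s-t_1-l\epsilon,\cdot)$ and compare with $u$ started at $t_0$ (Proposition~\ref{comparison-prop}) to obtain
$$u(t,x)\ge\big(1-\epsilon e^{-\eta(t-t_0)}\big)\,U\big(t-(t_1+l\epsilon)+l\epsilon e^{-\eta(t-t_0)},x\big)\qquad\text{for }t\ge t_0,$$
which is the asserted lower bound with $t_-=t_0$ and $\tau_-=t_1+l\epsilon$.

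The upper bound is the mirror argument and is easier, because the global bound $u(t,x)\le(1+\epsilon')u^+(x)$ for $t\ge T^*$ already covers the middle region: for $x\cdot\xi-ct_0\ge C_0$, Lemma~\ref{stability-lm1} and monotonicity give $u(t_0,x)\le U(t_0+2,x)\le U(t_0+t_2,x)$ with $t_2\ge2$; for the complementary region, taking $t_2$ large forces $U(t_0+t_2,x)=\Psi(x\cdot\xi-c(t_0+t_2),x)\ge(1-\epsilon')u^+(x)$ there (since $\Psi(\zeta,\cdot)\to u^+$ as $\zeta\to-\infty$), so $u(t_0,x)\le(1+\epsilon')u^+(x)\le(1+\epsilon)U(t_0+t_2,x)$ once $\epsilon'$ is small; then Lemma~\ref{continuity-stability-lm1}, applied to $s\mapsto U(s+t_2+l\epsilon,\cdot)$, propagates this forward and yields the upper bound with $t_+=\max\{t_0,T^*\}$ and $\tau_+=t_2+l\epsilon$. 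Choosing $t_-,t_+,\tau_-,\tau_+$ as above proves the lemma.

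The step I expect to be the main obstacle is the bounded middle slab in the lower bound. In Lemma~\ref{continuity-lm3} the profile $U_1$ is a genuine traveling wave and is therefore automatically bounded below by a positive constant on every bounded range of $x\cdot\xi-ct$; here one must manufacture such a bound for $u(t_0,\cdot)$ all the way out to $x\cdot\xi\approx ct_0+C_0$, and the only available mechanism is a sub-solution whose plateau rides at the wave speed $c$, namely the one from Proposition~\ref{sub-solution-prop3}. Arranging that this plateau is simultaneously tall enough to sit under $u(T_1,\cdot)$ (so that the comparison is legitimate) and long enough to overtake $x\cdot\xi\approx ct_0+C_0$, all while keeping $b\ll1$, forces a careful ordering of the choices of $b$, $T$, $d_1$, $T_1$, $t_0$, $t_1$; verifying this bookkeeping is the real content of the proof, the rest being a routine transcription of the arguments in Lemmas~\ref{continuity-lm2}--\ref{continuity-lm5}.
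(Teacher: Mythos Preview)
Your approach is correct and is precisely the kind of ``proper modification'' of Lemma~\ref{continuity-lm3} that the paper invokes; the paper does not spell out a proof of Lemma~\ref{stability-lm2} at all, merely pointing to the counterpart lemmas. You have correctly isolated the one genuine new difficulty: in Lemma~\ref{continuity-lm3} the profile $U_1$ is itself a traveling wave, so $\Psi_1(\eta,\cdot)$ is automatically bounded below on every compact $\eta$-range by periodicity, whereas here $u(t,\cdot;u_0,0)$ enjoys no such structure on the slab between the spreading front (speed $c^*$) and the wave front (speed $c>c^*$), and you fill that gap with the speed-$c$ sub-solution of Proposition~\ref{sub-solution-prop3}. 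The upper bound is indeed easier, exactly as you say, since comparison with $u(\cdot,\cdot;M,0)\to u^+$ for a large constant $M$ already controls the whole region $\{x\cdot\xi-ct_0<C_0\}$.

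One small refinement worth recording when you write this out: the constraint that the plateau of $\underline u(\cdot-T_1,\cdot;0,T,d_1,b)$ reach past $x\cdot\xi=ct_0+C_0$ is $M-c(T+T_1)\ge C_0$, which is a condition on the \emph{parameters} and not on $t_0$; so once the fitting at time $T_1$ is arranged (tail via Lemma~\ref{stability-lm1}, left part via Proposition~\ref{positive-solu-prop}), the plateau bound $u(t,x)\ge b\min\phi_0$ holds on $\{x\cdot\xi\le ct+M-c(T+T_1)\}$ for \emph{all} $t\ge T_1$, and only then do you fix $t_0$ large and finally $t_1\gg t_0$ so that $\Psi(\cdot-c(t_0-t_1),\cdot)\le b\min\phi_0$ on $\{x\cdot\xi\ge -L\}$. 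Ordering the choices this way (first $d_1,b,M$; then $T_1,T$ for the fitting; then $L$; then $t_0$; then $t_1$) makes the bookkeeping you flag as the main obstacle go through cleanly.
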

%\begin{proof} This is an analogue of Lemma \ref{continuity-lm3} and can be proved by properly modifying the arguments
%in Lemma \ref{continuity-lm3}.

%For clarity, we also provide a proof in the following.
%
% By Lemma \ref{stability-lm1}, there exists a constant $C_0(1)$ such that
%$$
%u(t,x)\geq U(t-2,x)\quad \forall x\cdot\xi-ct\geq C_0(1),\,\, t\geq 2.
%$$
%Observe that there are $t_->2$ and $\tau_->2+l\epsilon$ such that
%$$
%u(t_-,x)\geq (1-\epsilon)U(t_--(\tau_--l\epsilon),x)\quad \forall x\cdot\xi -c t_-\leq C_0(1).
%$$
%Thus,
%\begin{equation*}
%u(t_-,x)\geq (1-\epsilon)U(t_--\tau_-+l\epsilon,x)\quad \forall x\in\RR^N.
%\end{equation*}
%By Lemma \ref{continuity-stability-lm1},
%$$
%u(t,x)\geq (1-\epsilon e^{-\eta (t-t_-)})U(t-\tau_-+l\epsilon  e^{-\eta( t-t_-)},x)\quad \forall t\geq t_-,\, x \in \RR^N.
%$$
%
%Similarly, by Lemma \ref{stability-lm1},  there exists a constant $C_0(1)$ such that
%$$
%u(t,x)\leq U(t+2,x)\quad \forall x-ct\geq C_0(1),\,\, t\geq 2.
%$$
%Observe that there are $t_+>0$ and $\tau_+>2+l\epsilon$ such that
%\begin{align*}
%u(t_+,x)\leq (1+\epsilon )U( t_++\tau_+-l\epsilon,x)\quad \forall x\cdot\xi-ct_{0}\leq C_0(1)
%\end{align*}
%By Lemma \ref{continuity-stability-lm1} again,
%$$
%u(t,x)\leq (1+\epsilon e^{-\eta (t-t_+)})U(t+\tau_++-l\epsilon e^{-\eta (t-t_+)},x)\quad \forall t\geq t_+, \, x \in \RR^N.
%$$
%The lemma then follows.
%\end{proof}

\begin{lemma}
\label{stability-lm3}
Let $\tau>0$, $t_{1}>0,$ and $M \in \RR$ be given. Suppose that $w^{\pm}(\cdot,x;t_{1})$ are the solution of \eqref{main-eq} for $t \geq 0$ with the initial
conditions
 $$
 w^{\pm}(0,x;t_{1})=U(t_{1}\pm \tau,x)\varsigma(x-ct_{1}-M)+U(t_{1}\pm 2\tau,x)(1-\varsigma(x-ct_{1}-M)) \quad \forall x\in \RR^N,
 $$
  where $\varsigma(s)=0$ for $s \leq 0$ and $\varsigma(s)=1$ for $s > 0$. Then
\begin{align*}
 w^{+}(1,x;t_{1})\leq (1+\epsilon)U(t_{1}+1+2\tau-3l\epsilon)\\
 w^{-}(1,x;t_{1})\geq (1-\epsilon)U(t_{1}+1-2\tau+3l\epsilon),
\end{align*}
for all $ x\cdot\xi-ct_{1} \leq M+c$ and $0<\epsilon\ll 1$.
\end{lemma}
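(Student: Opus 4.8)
The plan is to run the same ``patching plus squeezing'' argument used in the proof of Lemma \ref{continuity-lm5}, but for \eqref{main-eq} (i.e. the $z=0$ specialization of \eqref{main-shifted-eq}) and with Lemma \ref{continuity-stability-lm2} taken at $z=0$. I would only treat $w^-(1,x;t_1)$; the bound on $w^+(1,x;t_1)$ is obtained by the mirror-image argument (using the upper half of Lemma \ref{continuity-stability-lm2} and the inequality $w^+(0,x;t_1)\le U(t_1+2\tau,x)$). Recall $U(t,x)=U^+(t,x;0)$ is itself an entire solution of \eqref{main-eq} by Lemma \ref{existence-lm1}, and $0<U(t,x)<u^+(x)$ for all $t,x$.

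First I would record a crude global comparison. Since $U(t,x)$ is strictly increasing in $t$ by Theorem \ref{existence-thm}(2), we have $U(t_1-\tau,x)\ge U(t_1-2\tau,x)$, hence $w^-(0,x;t_1)\ge U(t_1-2\tau,x)$ for all $x\in\RR^N$, with strict inequality on $\{x\cdot\xi-ct_1>M\}$, a set of positive Lebesgue measure. As both $w^-(\cdot,\cdot;t_1)$ and $U(\cdot+t_1-2\tau,\cdot)$ solve \eqref{main-eq}, Proposition \ref{comparison-prop} gives $w^-(1,x;t_1)>U(1+t_1-2\tau,x)$ for all $x$. Next, for the far-left region, fix $\epsilon_1\in(0,\epsilon_0]$ and let $M_1(\epsilon_1)$ be the constant of Lemma \ref{continuity-stability-lm2} (with $z=0$); then for $\epsilon\in(0,\epsilon_1]$ and $x\cdot\xi-ct\le -M_1(\epsilon_1)$ one has $U(t,x)\ge(1-\epsilon)U(t+3l\epsilon,x)$. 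Applying this at $t=1+t_1-2\tau$ and combining with the crude bound yields
$$
w^-(1,x;t_1)>(1-\epsilon)U\big(1+t_1-2\tau+3l\epsilon,x\big)\qquad\text{whenever } x\cdot\xi-c(t_1+1)\le -M_1(\epsilon_1).
$$

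The crux is the intermediate band $x\cdot\xi-c(t_1+1)\in[-M_1(\epsilon_1),M]$. Set $W(t,x)=w^-(t,x;t_1)-U(t+t_1-2\tau,x)\ge 0$. Subtracting the two equations and using the mean value theorem together with the boundedness of $f$ and $f_u$ on the compact range $[0,u^+_{\sup}]$ (valid since both $w^-$ and $U$ take values in $(0,u^+_{\sup}]$ and $f(x,\cdot)$ is $\mathbf p$-periodic in $x$), I would produce a constant $h\in\RR$ with $W_t(t,x)\ge(\mathcal{K}-I)W(t,x)+hW(t,x)$ on $[0,1]\times\RR^N$, whence $W(1,\cdot)\ge e^{h-1}\sum_{n\ge 0}\frac{\mathcal{K}^n}{n!}W(0,\cdot)$. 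Now $W(0,x)=\big(U(t_1-\tau,x)-U(t_1-2\tau,x)\big)\varsigma(x\cdot\xi-ct_1-M)\ge0$, and by Theorem \ref{existence-thm}(2) with Lemma \ref{existence-lm2} there are $\tilde\sigma>0$, $\tilde M>0$ with $U(t_1-\tau,x)-U(t_1-2\tau,x)\ge\tilde\sigma$ for $\tilde M\le x\cdot\xi-ct_1\le\tilde M+1$. Since the series supplies all powers of the finite-range operator $\mathcal{K}$, this positive block is transported leftward across the band, giving a constant $c_0>0$, independent of $\epsilon$ (depending only on $\tilde\sigma,h,M_1(\epsilon_1),M,\delta_0$), with $W(1,x)\ge c_0$ on the band. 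Finally, as $U_t$ is bounded on the relevant region, $U(1+t_1-2\tau+3l\epsilon,x)-U(1+t_1-2\tau,x)\le c_0$ for $0<\epsilon\ll1$, so on the band
$$
w^-(1,x;t_1)=W(1,x)+U(1+t_1-2\tau,x)\ge U\big(1+t_1-2\tau+3l\epsilon,x\big)\ge(1-\epsilon)U\big(1+t_1-2\tau+3l\epsilon,x\big).
$$
Together with the far-left estimate this gives the claim for all $x$ with $x\cdot\xi-ct_1\le M+c$.

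I expect the one genuine difficulty to be the mass-spreading step: making precise, uniformly in $\epsilon$, how much of the positive mass $\tilde\sigma$ on the unit band $\{\tilde M\le x\cdot\xi-ct_1\le\tilde M+1\}$ is carried by the iterated finite-range kernel (through the exponential series) into the entire intermediate band, and checking that the resulting lower bound $c_0$ dominates the $O(\epsilon)$ ``time-shift defect'' $U(\cdot+3l\epsilon)-U(\cdot)$. This relies on the strict positivity $\tilde\sigma$ for $U(t_1-\tau,\cdot)-U(t_1-2\tau,\cdot)$ on a unit band, which follows from $U_t>0$ together with the limits in Lemma \ref{existence-lm2}, and on the lower bound for $h$, which follows from the boundedness of $f$ and $f_u$ on $(0,u^+_{\sup}]$. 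Everything else is a routine transcription of the proof of Lemma \ref{continuity-lm5}.
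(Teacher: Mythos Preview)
Your proposal is correct and follows precisely the route the paper intends: the authors explicitly state that Lemma~\ref{stability-lm3} is proved by ``properly modifying the arguments'' of Lemma~\ref{continuity-lm5}, and your argument is exactly that proof specialized to $z=0$, including the crude global comparison, the far-left estimate via Lemma~\ref{continuity-stability-lm2}, and the mass-spreading step through the exponential series for $\mathcal{K}$ on the intermediate band. Your identification of the mass-transport step as the only delicate point is accurate and matches the structure of the paper's proof of Lemma~\ref{continuity-lm5}.
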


\end{document}